\newcommand{\ben}{\begin{enumerate}}
\newcommand{\bit}{\begin{itemize}}
\newcommand{\een}{\end{enumerate}}
\newcommand{\eit}{\end{itemize}}
\newcommand{\un}{\underline}
\newcommand{\mc}{\mathcal}
\newcounter{romc}
\newcounter{alphc}
\newcommand{\bd}{\begin{definition}}
\newcommand{\ed}{\end{definition}}
\newcommand{\bp}{\begin{proposition}}
\newcommand{\ep}{\end{proposition}}
\newcommand{\ba}{\begin{eqnarray*}}
\newcommand{\ea}{\end{eqnarray*}}
\newcommand{\bde}{\begin{description}}
\newcommand{\ede}{\end{description}}
\newcommand{\be}{\begin{eqnarray}}
\newcommand{\ee}{\end{eqnarray}}
\newcommand{\bn}{\begin{note}}
\newcommand{\en}{\end{note}}
\newtheorem{notation}{Notation}
\newcommand{\blr}{\begin{list}{~(\roman{romc})~}
{\usecounter{romc}
        \setlength{\topsep}{0pt} \setlength{\itemsep}{0pt}}}
\newcommand{\elr}{\end{list}}
\newcommand{\bla}{\begin{list}{~(\alph{alphc})~} {\usecounter{alphc}
        \setlength{\topsep}{0pt} \setlength{\itemsep}{0pt}}}
\newcommand{\ela}{\end{list}}
\begin{document}
\title{Stone and double Stone algebras: Boolean  and Rough Set Representations, 3-valued and 4-valued Logics}
\author{Arun Kumar \inst{1}  }
%
\institute{$^{1}$Department of Mathematics,\\ Institute of Science, Banaras Hindu University, 221005, India\\
\email{arunk2956@gmail.com}
}

\maketitle

\begin{abstract}
Moisil in 1941, while constructing the algebraic models of n-valued {\L}ukasiewicz logic defined the set $B^{[n]}$,where $B$ is a Boolean algebra and `n'  being a natural number. Further it was proved by Moisil himself the representations of n-valued {\L}ukasiewicz Moisil algebra in terms of $B^{[n]}$. In this article,    
structural representation results for Stone, dual Stone and double Stone algebras are proved similar to Moisil's work by showing that elements of these algebras  can be looked upon as
monotone ordered tuple of sets.  3-valued semantics of logic for Stone algebra, dual Stone algebras and 4-valued semantics of logic for double Stone algebras are proposed and established soundness and completeness results.  \\ 

 \vskip 2pt {\small {\bf Key words:} Stone algebras,  double Stone algebras, 3-valued logic, 4-valued logic,  
Rough sets.
}
\end{abstract}
\section{Introduction}
The Stone's \cite{Stone36} representation theorem for Boolean algebra identifies an element of a given Boolean algebra as a set, and has vital role in algebra and logic.
It shows that algebraic, set theoretic   and \textbf{T}rue-\textbf{F}alse semantics of classical propositional logic are equivalent. On the other hand, there are some works where an element of a given algebra is identified by a pair of sets. Some well known examples are:
\begin{itemize}
\item  (Moisil (cf. \cite{Cignoli07})) Moisil represented each  element of a given 3-valued LM algebra $\mathcal{A}$ can be looked upon as a monotone ordered pair of sets.
\item Rasiowa \cite{Rasiowa1} represented De Morgan algebras as set-based De Morgan algebras, where De Morgan negation  is defined by an involution function.
\item In Dunn's \cite{Dunn66,Dunn99} representation, each element of a De Morgan algebra can be identified with an ordered pair of sets, where De Morgan negation is defined as reversing the order in the pair.  

\end{itemize} 
\noindent Note that all these representation results have their logical insight, e.g., Dunn's representation leads to  his well known 4-valued semantics of De Morgan logic. In \cite{KB17}, author provided a representation result for Kleene algebras which is very similar to Mosil's representation of 3-valued LM algebra, and studied 3-valued aspect of corresponding logic (for Kleene algebras). 

In this article, we prove structural theorems for Stone and dual Stone algebras similar to the  Moisil representation of 3-valued LM algebra. Let us define the algebras.
\begin{definition}
An algebra $\mathcal{S} := (S,\vee,\wedge,\sim,0,1)$ is a {\rm Stone algebra} if 
\begin{enumerate}
\item $(S,\vee,\wedge,\sim,0,1)$ is a bounded distributive pseudo complemented lattice, i.e, $\forall a \in S$,
 $\sim a  = max\{c\in S: a \wedge c = 0\}$ exists.
\item $\sim a \vee \sim\sim a = 1$, for all $a \in S$.

\end{enumerate}
\end{definition}
The axiom  $\sim a \vee \sim\sim a = 1$ first appeared in \textbf{Problem 70} - What is the most general pseudo-complemented distributive lattice in which $a^{*} \vee a^{**} = I$ identically? (attributed to M.H.Stone) of Birkhoff's book \cite{BIRK48}. The formal definition of Stone algebra (lattice) appeared in Gratzer and Schmidt \cite{Gratzer57} and have been extensively studied in literature (cf. \cite{Balbes74}).
\noindent The dual notion  of a given Stone algebra is known as dual Stone algebra. For the self explanatory of  this article, let us explicitly define the dual Stone algebra. 
\begin{definition}
An algebra $\mathcal{DS} := (DS,\vee,\wedge,\neg,0,1)$ is a {\rm dual Stone algebra} if 
\begin{enumerate}
\item $(DS,\vee,\wedge,\sim,0,1)$ is a bounded distributive dual pseudo complemented lattice, i.e, $\forall a \in DS$,
 $\sim a  = min\{c\in DS: a \vee c = 1\}$ exists.
\item $\neg a \wedge \neg\neg a = 0$, for all $a \in DS$ (dual Stone property).

\end{enumerate}
\end{definition} 
In this article, we also present a representation result for double Stone algebra, where each element of a given double Stone algebra is identified as a monotone ordered 3-tuple of sets. Double Stone algebra is a bounded distributive lattice which is both Stone and dual Stone algebra.
\begin{definition}
An algebra $\mathcal{A} := (A,\vee,\wedge,\sim,\neg, 0,1)$ is  a {\rm   double Stone algebra} if 
\begin{enumerate}
\item $(A,\vee,\wedge,\sim,0,1)$ is a bounded distributive  lattice,
\item $(A,\vee,\wedge,\sim,0,1)$ is a Stone algebra, 
\item $(A,\vee,\wedge,\neg,0,1)$ is dual Stone algebra,
\end{enumerate}
\end{definition}

\noindent It is well known that 3-valued LM algebras are algebraic models of 3-valued {\L}ukasiewicz logic, and Kleene algebras, Stone algebras, dual Stone algebras and double Stone algebras appear as reduct algebras \cite{Boicescu91}  of 3-valued LM algebras. So, it is natural to ask `can we provide multi valued semantics of logics corresponding these algebras?'. In \cite{KB17}, author provided a 3-valued semantics of logic corresponding to Kleene algebras. In this paper we provide 3-valued semantics of logic ($\mathcal{L}_{\mathcal{S}}$) for Stone,  logic ($\mathcal{L}_{\mathcal{DS}}$) for dual Stone algebras and a 4-valued semantics of logic ($\mathcal{L}_{\mathcal{DBS}}$) for double Stone algebras.

In other aspect of this paper, we make explicit connections between rough sets, Stone and dual Stone algebras by providing rough set representations of these algebras.   Rough set theory, introduced by Pawlak \cite{pawlak82,Pawlak91} in 1982, also provides a way to look elements of various algebras as monotone ordered pair of sets. 
There are various algebraic representations in terms of rough sets. We mention here some well known representation results. For a good expositions of various algebraic representation results  in terms of rough sets, we refer to \cite{BC04}.
\begin{enumerate} \item (Comer \cite{Comer95}) Every regular double Stone algebra is isomorphic to an algebra of rough sets in a Pawlak approximation space. 
\item (Pagliani, \cite{Pa96}) Any finite semi simple Nelson algebra is isomorphic to a Nelson algebra formed by rough sets for some appropriate approximation space.
\item  (J\"{a}rvinen, Radeleczki \cite{JR11}) 
Every Nelson algebra defined over an algebraic lattice is isomorphic to an algebra of rough sets in an approximation space based on a quasi order. 
\item  (Kumar, Banerjee \cite{KB17}) 
Every Kleene algebra  is isomorphic to an algebra of rough sets in in a Pawlak approximation space. 
\end{enumerate}
\noindent Since Pawlak introduced the rough set theory, the algebraic and logical study of rough set theory evolved simultaneously. In fact, there are some works where the appearance of logics are motivated by the algebraic representations of rough sets. It is  noteworthy to mention here some logics which arose in context of rough set theory.
\begin{enumerate}
\item (Banerjee and Chakraborty \cite{BC96}) The emergence of Pre-rough logic was motivated by algebraic  representation of Pre-rough algebra in terms of rough sets. It has been shown (in \cite{BC96})
that Pre-rough logic is sound and complete in class of all Pre-rough algebra formed by rough sets, for all approximation spaces. 
\item (J\"{a}rvinen, Radeleczki \cite{JR11}, J\"{a}rvinen, Pagliani and Radeleczki \cite{JPR12}) Constructive logic with strong negation (CLSN) is sound and complete in class of all finite Nelson algebras formed by rough sets, for all  approximation spaces.
\item (Kumar and Banerjee \cite{KB17}) The $\mathcal{L}_{K}$ emerged as a result of rough set representation of Kleene algebras. Moreover, the logic $\mathcal{L}_{K}$ is sound and complete in class of all Kleene algebras formed rough sets, for all  approximation spaces.    
\end{enumerate}

\noindent In this work also, our  algebraic and logical study of rough set theory evolved simultaneously.  Rough set representations of Stone and dual Stone algebra which further leads to equivalency between rough set semantics, 3-valued semantics and algebraic semantics of the logics $\mathcal{L}_{\mathcal{S}}$ and $\mathcal{L}_{\mathcal{DS}}$.

The study of logics in this article is  completely based on  $DLL$. Let us present the logic.
The language consists of
\begin{itemize} 
\item Propositional variables:  $p,q,r, \ldots$. 
\item Logical connectives: $\vee, \wedge$.
\end{itemize}
\noindent The well-formed formulas of the logic are then given by the scheme: \\\hspace*{2 cm} $p ~| ~\alpha \vee \beta~ |~ \alpha \wedge \beta~ $. \vskip 2pt
\begin{notation} Denote the set of  propositional variables by $\mathcal{P}$, and that of well-formed formulas by $\mathcal{F}$. 
\end{notation}
\noindent Let $\alpha$ and $\beta$ be two formulas. The pair $(\alpha,\beta)$ is called a {\it consequence pair}. The rules and postulates of the logic $DLL$ are presented in terms of consequence pairs. Intuitively, the consequence pair $(\alpha,\beta)$ reflects that $\beta$ is a consequence of $\alpha$. In the representation of a logic, a consequence pair $(\alpha,\beta)$ is denoted by $\alpha \vdash \beta$ (called a {\it consequent}). 
The logic 
is now given through the following postulates and rules, taken from Dunn's \cite{Dunn99} and \cite{Dunn05}. These define reflexivity and transitivity of $\vdash$, introduction, elimination principles and the distributive law for the connectives $\wedge$ and $\vee$. 

\begin{definition} 
 {\rm ($DLL$- postulates)
\begin{enumerate}
\item $\alpha \vdash \alpha$ \hspace{1 cm}(Reflexivity). 
\item $\alpha \vdash \beta , \beta \vdash \gamma / \alpha \vdash \gamma$ \hspace{1 cm}(Transitivity).
\item $\alpha \wedge \beta \vdash \alpha$, $\alpha \wedge \beta \vdash \beta$ \hspace{1 cm}(Conjunction Elimination)
\item $\alpha \vdash \beta , \alpha \vdash \gamma / \alpha \vdash \beta \wedge \gamma$ \hspace{1 cm} (Conjunction Introduction)
\item $\alpha \vdash \alpha \vee \beta $, $\beta \vdash \alpha \vee \beta$ \hspace{1 cm}(Disjunction Introduction)
\item $\alpha \vdash \gamma , \beta \vdash \gamma/\alpha \vee \beta \vdash \gamma$ \hspace{1 cm} (Disjunction Elimination)
\item $\alpha \wedge (\beta \vee \gamma) \vdash (\alpha \wedge \beta) \vee (\alpha \wedge \gamma)$ \hspace{1 cm}(Distributivity)
\end{enumerate}}
\end{definition}
Further Dunn in \cite{Dunn95} extended the language of $DLL$ by adding,
\begin{itemize}
\item Propositional constants: $\top, \bot$.
\end{itemize}
Then, he added the following postulate to extend $DLL$ to give a logic $BDLL$, whose algebraic models are bounded distributive lattices.
\begin{itemize}
\item  $\alpha \vdash \top$ (Top);
$\bot \vdash \alpha$ (Bottom).
\end{itemize}
In this paper, semantics of a logic is  defined via valuations. 
 Let $\mathcal{A} = (A,\vee,\wedge,f_{1},f_{2},0,1)$ be a lattice based algebra, and $\mathcal{F}_{f_{1},f_{2}}$ be extension of $\mathcal{F}$ by adding unary connectives $f_{1}$ and $f_{2}$. 
A map $v:\mathcal{F}_{f_{1},f_{2}}  \rightarrow A$ is called a valuation on $A$ if
\begin{enumerate}
\item $v(\alpha \wedge \beta) = v(\alpha) \wedge \vee(\beta)$, 
\item $v(\alpha \vee \beta) = v(\alpha) \vee \vee(\beta)$.
\item $v(f_{1} \alpha) = f_{1} v(\alpha)$, $v(f_{2} \alpha) = f_{2} v(\alpha)$.
\item $v(\bot) = 0 $, $v(\top) = 1$. 
\end{enumerate} 
A consequent $\alpha \vdash \beta$ is {\it valid in A under the valuation} $v$, if $v(\alpha) \leq v(\beta)$. If the consequent is valid under all valuations on $A$, then it  is {\it valid in A}, and denote it as $\alpha \vDash_{A} \beta$.  Let $\mathcal{A}$ be a class of  algebras. If the consequent $\alpha \vdash \beta$ is valid in each algebra of $\mathcal{A}$, then we say $\alpha \vdash \beta$ {\it is valid in} $\mathcal{A}$, and denote it as $\alpha \vDash_{\mathcal{A}} \beta$.

This paper is organized as follows. In Section \ref{sec2}, we provide structural representations of Stone and dual Stone algebras in which elements of these algebra are defined by pairs of Boolean elements and rough sets. Negations in corresponding algebras are defined using Boolean complements. Moreover, we also obtained the 3-valued and rough set semantics of the proposed logics $\mathcal{L}_{S}$ and $\mathcal{L}_{DS}$. In Section \ref{sec3}, we show that each element of a double Stone algebra can be represented as ordered monotone 3-tuple of Boolean elements, and hence  3-tuple of sets. This leads to a 4-valued semantics of the proposed logic $\mathcal{L}_{DBS}$. Finally, we conclude this article in Section \ref{sec4}.

\vskip 2pt The lattice theoretic results used in this article are taken from \cite{davey}. We  use the convention of representing a set $\{x,y,z,...\}$ by $xyz....$.

\section{Boolean representations of Stone and dual Stone  algebras and corresponding logics} \label{sec2}
It has been a general trend in algebra to construct a new \textbf{type} of algebra from a given class of algebras. Some well known examples of such constructions are:
\bit
\item  Nelson algebra from a given Heyting algebra (Vakarelov \cite{Vakarelov77}, Fidel \cite{Fidel78}).
\item Kleene algebras from distributive lattices (Kalman \cite{KAL58}).
\item 3-valued {\L}ukasiewicz-Moisil (LM) algebra from a given Boolean algebra (Moisil, cf. \cite{Cignoli07}).
\item Regular double Stone algebra from a Boolean algebra  (Katri\v{n}\'{a}k \cite{KATRINAK74}, cf. \cite{Boicescu91}).
\eit
\noindent  More importantly, the afore mentioned constructions can be reversed in the sense of representations  of these new \textbf{type} of algebras in terms of the given class of algebras. In the same lines, our work in this section, is based on  Moisil's construction of a 3-valued LM algebra.  
As we mentioned in the Introduction, De Morgan, Kleene, Stone and double Stone algebras appear as reduct of  3-valued LM algebra (c.f. \cite{Boicescu91}). Exploiting this fact, we prove structural representation theorems for Stone and dual Stone algebras and provide 3-valued semantics for logics of  Stone and dual Stone algebras.
 
 
\subsection{Boolean representations of Stone and  dual Stone  algebras}
Let $\mathcal{B} := (B,\vee,\wedge,^{c},0,1)$ be a Boolean algebra, the set $B^{[2]} := \{(a,b): a\leq b, a,b \in B\}$ was first studied by Moisil, while constructing the algebraic models for 3-valued  {\L}ukasiewicz logic. In fact, Moisil showed that the structure $(B^{[2]},\vee,\wedge,^{\prime},\Delta,(0,0),(1,1))$ is a 3-valued {\L}ukasiewicz-Moisil (LM) algebra, where $\vee$, $\wedge$ are component wise operations, and for $(a,b) \in B^{[2]}$ $(a,b)^{\prime} = (b^{c},a^{c})$ and  $\Delta(a,b) = (a,a)$. Moreover, the following structural representation result was proved by Moisil.
\begin{theorem}(cf. \cite{Cignoli07})
Let $\textbf{LM} = (LM, \vee,\wedge,^{\prime},\Delta,0,1)$ be a 3-valued {\L}ukasiewicz-Moisil (LM) algebra, then  there exists a Boolean algebra $B$ such that $\textbf{LM}$ is embeddable  into  $(B^{[2]},\vee,\wedge,^{\prime},\Delta,(0,0),(1,1))$.

\end{theorem}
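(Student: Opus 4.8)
The plan is to build the Boolean algebra $B$ intrinsically from $\mathbf{LM}$ and then to send each element to the pair formed by its necessity and its possibility. Concretely, I would take $B := \Delta(LM) = \{\Delta x : x \in LM\}$, the image of the Moisil operator, which coincides with its set of fixed points since $\Delta$ is idempotent (equivalently, $B$ is the set of complemented, i.e. central, elements of $\mathbf{LM}$). Alongside $\Delta$ I would use the dual operator $\nabla x := (\Delta x')'$. The guiding picture is exactly the target structure: an element $(a,b) \in B^{[2]}$ is recovered from its two Moisil projections $\Delta(a,b) = (a,a)$ and $\nabla(a,b) = (b,b)$, so the natural candidate embedding is $f(x) := (\Delta x, \nabla x)$.

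First I would check that $(B,\vee,\wedge,{}^c,0,1)$ is a Boolean algebra whose lattice operations are the restrictions of those of $\mathbf{LM}$. Using that $\Delta$ preserves $\vee$ and $\wedge$ and fixes $0,1$, the set $B$ is a bounded distributive sublattice. The key point is complementation: the characteristic $3$-valued LM identities give $\Delta x \vee (\Delta x)' = 1$ and $\Delta x \wedge (\Delta x)' = 0$, so every element of $B$ is complemented, with complement the restriction of the De Morgan negation $'$; hence on $B$ the operation $'$ is a genuine Boolean complement $^c$. I would then verify that $f$ actually lands in $B^{[2]}$: clearly $\Delta x \in B$, and $\nabla x = (\Delta x')' \in B$ because $\Delta x' \in B$ and $B$ is closed under $'$; moreover $\Delta x \le x \le \nabla x$ yields $\Delta x \le \nabla x$, so $(\Delta x, \nabla x) \in B^{[2]}$.

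Next I would check that $f$ is a homomorphism for all operations. Preservation of $\vee$ and $\wedge$ follows because both $\Delta$ and $\nabla$ distribute over joins and meets, so $f$ is computed componentwise. For the negation, unwinding the target definition $(\Delta x, \nabla x)' = ((\nabla x)^c, (\Delta x)^c)$ and using $^c = {}'$ on $B$ reduces the claim to the identities $(\nabla x)' = \Delta x'$ and $(\Delta x)' = \nabla x'$, both immediate from $\nabla y = (\Delta y')'$ and $x'' = x$. Preservation of $\Delta$ reduces to $\Delta\Delta x = \Delta x$ and $\nabla \Delta x = \Delta x$ (the latter because $\Delta x$ is Boolean, hence fixed by $\nabla$), matching $\Delta(\Delta x, \nabla x) = (\Delta x, \Delta x)$; and $f(0) = (0,0)$, $f(1) = (1,1)$ are clear. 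Finally, injectivity of $f$ is exactly the determination principle of $3$-valued LM algebras: $\Delta x = \Delta y$ and $\nabla x = \nabla y$ force $x = y$. I expect the main obstacle to be the careful verification that $B$ is genuinely Boolean, that is, that the De Morgan negation restricts to a true complement on the image of $\Delta$, since this is precisely where the specific $3$-valued LM axioms (rather than those of a mere De Morgan algebra equipped with an operator) are indispensable; once this is in place, the remaining homomorphism checks are componentwise bookkeeping.
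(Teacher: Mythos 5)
The paper does not actually prove this theorem: it is quoted from the literature (Moisil's result, via Cignoli's survey), so there is no in-paper argument to compare against. Your proposal reconstructs exactly the classical proof --- take $B$ to be the Boolean centre $\Delta(LM)$ (equivalently the complemented elements), embed via $x \mapsto (\Delta x, \nabla x)$ with $\nabla x := (\Delta x')'$, and check the operations componentwise --- which is the argument found in the cited sources, so your route and the intended one coincide. The only substantive ingredient you invoke without establishing is Moisil's determination principle ($\Delta x = \Delta y$ and $\nabla x = \nabla y$ imply $x = y$), which carries the entire burden of injectivity and does require a short derivation from the specific $3$-valued axioms (e.g.\ from $x \wedge x' = \Delta x \wedge x'$ together with the De Morgan laws); since you identify it by name as the key lemma rather than assuming injectivity silently, the sketch is sound as it stands.
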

\noindent It is well known that 3-valued {\L}ukasiewicz-Moisil  algebra, regular double Stone algebra and semi simple Nelson algebra are equivalent in the sense that one can be obtained from the other by providing appropriate transformations. Hence a 3-valued {\L}ukasiewicz-Moisil (LM) algebra is also an Stone and a double Stone algebra.  
%
\begin{proposition}   \cite{Boicescu91}
\begin{enumerate}
\item $\mathcal{B}_{\sim}^{[2]} := (B^{[2]}, \vee, \wedge, \sim, (0,0), (1,1))$ is a Stone algebra, where, for $(a,b) \in B^{[2]}$, $\sim(a,b) := (b^{c},b^{c})$.
\item $\mathcal{B}_{\neg}^{[2]} := (B^{[2]}, \vee, \wedge, \neg, (0,0), (1,1))$ is a dual Stone algebra, where, for $(a,b) \in B^{[2]}$, $\neg(a,b) := (a^{c},a^{c})$.
\end{enumerate}
\end{proposition}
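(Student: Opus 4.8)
The plan is to treat $B^{[2]}$ as a subalgebra of the direct square $B \times B$ and then verify the two defining conditions of a Stone algebra (respectively dual Stone algebra) directly from the description of $\sim$ (respectively $\neg$). First I would observe that $B^{[2]}$ is closed under the component-wise operations: if $(a,b),(c,d) \in B^{[2]}$, then $a \leq b$ and $c \leq d$ give $a \wedge c \leq b \wedge d$ and $a \vee c \leq b \vee d$, so both $(a,b)\wedge(c,d)$ and $(a,b)\vee(c,d)$ again lie in $B^{[2]}$; since $(0,0)$ and $(1,1)$ belong to $B^{[2]}$ and $B \times B$ is a bounded distributive lattice, $B^{[2]}$ is a bounded distributive sublattice with bottom $(0,0)$ and top $(1,1)$. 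This disposes of the lattice part of both claims at once.

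For part (1), the core step is to show that $(b^{c},b^{c})$ really is the pseudocomplement of $(a,b)$, i.e.\ the greatest $(c,d)\in B^{[2]}$ with $(a,b)\wedge(c,d)=(0,0)$. The crucial observation is that the constraint $a \leq b$ collapses the two coordinate conditions into one: since $a \leq b$ and $c \leq d$ imply $a \wedge c \leq b \wedge d$, the equality $(a \wedge c,\, b \wedge d) = (0,0)$ is equivalent to the single condition $b \wedge d = 0$, that is $d \leq b^{c}$. Among pairs $(c,d)$ with $c \leq d \leq b^{c}$ the largest is plainly $(b^{c},b^{c})$, which indeed lies in $B^{[2]}$; hence $\sim(a,b)=(b^{c},b^{c})$ is the pseudocomplement and $B^{[2]}$ is pseudocomplemented. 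I expect this reduction --- recognising that monotonicity together with $a \leq b$ makes the second coordinate dominate --- to be the only genuinely non-routine point.

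It then remains to verify the Stone identity. Computing $\sim\sim(a,b)=\sim(b^{c},b^{c})=(b^{cc},b^{cc})=(b,b)$, I would conclude $\sim(a,b)\vee\sim\sim(a,b)=(b^{c}\vee b,\, b^{c}\vee b)=(1,1)$, so the second axiom of the Stone definition holds and $\mathcal{B}_{\sim}^{[2]}$ is a Stone algebra. Part (2) is entirely dual: using that $a \leq b$, $c \leq d$ imply $a \vee c \leq b \vee d$, the condition $(a,b)\vee(c,d)=(1,1)$ reduces to $a \vee c = 1$, i.e.\ $c \geq a^{c}$, whose least solution in $B^{[2]}$ is $(a^{c},a^{c})$; thus $\neg(a,b)=(a^{c},a^{c})$ is the dual pseudocomplement. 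Finally $\neg\neg(a,b)=\neg(a^{c},a^{c})=(a,a)$ yields $\neg(a,b)\wedge\neg\neg(a,b)=(a^{c}\wedge a,\, a^{c}\wedge a)=(0,0)$, establishing the dual Stone property and completing the proof.
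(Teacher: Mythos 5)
Your proof is correct and takes essentially the same direct-verification route as the paper: identify $(b^{c},b^{c})$ as the pseudocomplement of $(a,b)$ by reducing the annihilation condition to the second coordinate, and dually for $\neg$. You are in fact somewhat more thorough than the paper, whose proof only carries out the pseudocomplement computation and leaves the lattice closure of $B^{[2]}$ and the Stone identity $\sim(a,b)\vee\sim\sim(a,b)=(1,1)$ unstated.
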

Let us  demonstrate the proof of $1$, proof of $2$ follows similarly.
\begin{proof}  
\noindent $(a,b) ~\wedge  (c,d) = (a\wedge c, b\wedge d) = (0,0)$. So, $c \leq a^{c}$ and $d \leq b^{c}$. Hence $(c,d) \leq (b^{c},b^{c})$. Clearly, we have $(a,b) ~\wedge  (b^{c},b^{c}) =  (0,0)$.
 Hence $\sim(a,b) = (b^{c},b^{c})$.\qed

\end{proof}
It is well known that, with pseudo negation $\sim$ and dual pseudo negation $\neg$, $\textbf{1}$, $\textbf{2}$ and $\textbf{3}$ are the only subdirect irreducible Stone and dual Stone algebras 
(Fig. \ref{fig1}). 
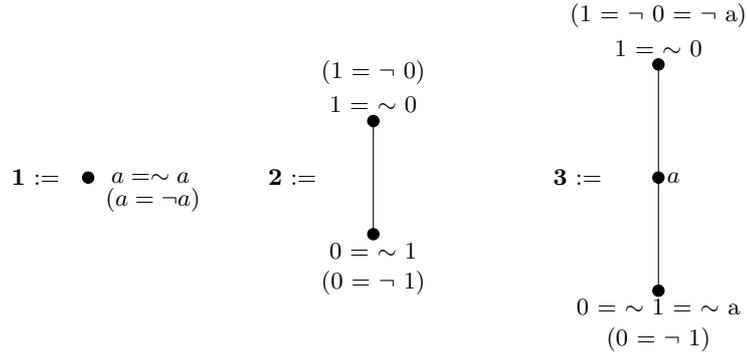
\begin{figure}[h] 
\begin{tikzpicture}[scale=.75]
    
    \draw [dotted] (0,1) -- (0,1);

		\draw [fill] (3,2) circle [radius = 0.1];
		\node [right] at (3.25,2) {$a = \sim a$};
		\node [right] at (3.15,1.60) {($a = \neg a$)};
		\node [right] at (1.5,2) {$\textbf{1}$ := };

		\node [right] at (6,2) {$\textbf{2}$ := };
		\draw (8,1) -- (8,3);
		\draw [fill] (8,1) circle [radius = 0.1];
		\node [below] at (8,1) {0 = $\sim$ 1};
		\node [below] at (8,0.5) {(0 = $\neg$ 1)};
		\draw [fill] (8,3) circle [radius = 0.1];
		\node [above] at (8,3) {1 = $\sim$ 0};
		\node [above] at (8,3.5) {(1 = $\neg$ 0)};
		
		\node [right] at (11,2) {$\textbf{3}$ := };
		\draw (13,0) -- (13,2);
		\node [below] at (13,0) {0 = $\sim$ 1 = $\sim$ a};
		\node [below] at (13,-0.5) {(0 = $\neg$ 1)};
    \draw [fill] (13,0) circle [radius = 0.1];
		\node [right] at (13,2) {$a $};
    \draw [fill] (13,2) circle [radius = 0.1];
		\draw (13,2) -- (13,4);
		\draw [fill] (13,4) circle [radius = 0.1];
		\node [above] at (13,4) {1 = $\sim$ 0};
		\node [above] at (13,4.5) {(1 = $\neg$ 0 = $\neg$ a)};

\end{tikzpicture}
\caption{Subdirectly irreducible Stone ($\sim$) and dual Stone ($\neg$) algebras}

\label{fig1}
\end{figure}
\noindent Hence using Birkhoff \cite{BIRK44} well known representation result, we have the followings. To distinguish the Stone and dual Stone algebra based on lattice $\textbf{3}$, we use $\textbf{3}_{\sim}$ and $\textbf{3}_{\neg}$  respectively. 
\begin{theorem}  \cite{Balbes74}
\begin{enumerate}
\item Let $\mathcal{S} = (S,\vee,\wedge,\sim,0,1)$ be an Stone algebra. There exists a (index) set $I$ such that $\mathcal{S}$ can be 
embedded into Stone algebra {\rm$\textbf{3}_{\sim}^{I}$}.
\item Let $\mathcal{DS} = (DS,\vee,\wedge,\neg,0,1)$ be a dual Stone algebra. There exists a (index) set $I$ such that $\mathcal{S}$ can be 
embedded into dual Stone algebra {\rm$\textbf{3}_{\neg}^{I}$}.

\end{enumerate}
\end{theorem}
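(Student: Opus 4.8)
The plan is to derive both statements directly from Birkhoff's subdirect representation theorem, combined with the classification of the subdirectly irreducible Stone (resp. dual Stone) algebras depicted in Figure~\ref{fig1}. Since the Stone axioms are equations, the class of Stone algebras is a variety, hence closed under homomorphic images, subalgebras and products. Thus Birkhoff's theorem applies: the Stone algebra $\mathcal{S}$ is isomorphic to a subdirect product $\prod_{i\in I}\mathcal{S}_i$ of subdirectly irreducible Stone algebras $\mathcal{S}_i$, each of which is a homomorphic image of $\mathcal{S}$ and therefore again a Stone algebra. By the stated classification, each factor $\mathcal{S}_i$ is (isomorphic to) one of $\textbf{1}$, $\textbf{2}$ or $\textbf{3}_\sim$.

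First I would record that each nontrivial factor embeds into $\textbf{3}_\sim$ as a Stone algebra. The identity gives $\textbf{3}_\sim\hookrightarrow\textbf{3}_\sim$, while the map $0\mapsto 0$, $1\mapsto 1$ embeds $\textbf{2}$ into $\textbf{3}_\sim$; one checks routinely that this map preserves $\vee$, $\wedge$, the bounds and the pseudocomplement $\sim$ (indeed $\sim 0=1$ and $\sim 1=0$ hold in both). The one-element factor $\textbf{1}$ cannot be embedded, since it identifies $0$ and $1$, but any trivial factor may simply be deleted from the subdirect product without destroying the subdirect embedding of $\mathcal{S}$; so we may assume every $\mathcal{S}_i\in\{\textbf{2},\textbf{3}_\sim\}$. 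Composing the subdirect embedding $\mathcal{S}\hookrightarrow\prod_{i\in I}\mathcal{S}_i$ with the product $\prod_{i\in I}\mathcal{S}_i\hookrightarrow\prod_{i\in I}\textbf{3}_\sim=\textbf{3}_\sim^{I}$ of these factor embeddings yields an embedding $\mathcal{S}\hookrightarrow\textbf{3}_\sim^{I}$, proving~(1).

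The step requiring the most care is the passage from ``subdirect product of subdirectly irreducibles'' to ``embedding into a single power of $\textbf{3}_\sim$'': one must verify that the factor maps are genuine Stone homomorphisms (in particular that they commute with $\sim$) and that deleting the trivial coordinates preserves injectivity of the overall map. Both points are straightforward once the operations on $\textbf{2}$ and $\textbf{3}_\sim$ are written out explicitly. Statement~(2) for dual Stone algebras is entirely dual: dual Stone algebras again form a variety, the subdirectly irreducibles are $\textbf{1}$, $\textbf{2}$ and $\textbf{3}_\neg$, and $\textbf{2}$ embeds into $\textbf{3}_\neg$ via $0\mapsto 0$, $1\mapsto 1$ preserving the dual pseudocomplement $\neg$, so the same composition argument gives $\mathcal{DS}\hookrightarrow\textbf{3}_\neg^{I}$.
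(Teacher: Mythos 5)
Your proposal is correct and follows essentially the same route as the paper, which obtains this theorem (citing Balbes--Dwinger) precisely by combining Birkhoff's subdirect representation theorem with the classification of the subdirectly irreducible Stone and dual Stone algebras as $\textbf{1}$, $\textbf{2}$ and $\textbf{3}$ shown in Figure~\ref{fig1}. The only difference is that you spell out the routine details (embedding $\textbf{2}$ into $\textbf{3}_{\sim}$ and discarding trivial factors) that the paper leaves implicit.
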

\noindent So, if $B$ is a Boolean algebra, then the Stone algebra $B^{[2]}_{\sim}$ and dual Stone algebra $B^{[2]}_{\neg}$ are embeddable into $\textbf{3}_{\sim}^{I}$ and 
$\textbf{3}_{\neg}^{J}$ respectively, for appropriate index sets $\textbf{I}$ and $\textbf{J}$. 

 Atoms play fundamental role in the study of Boolean algebras. Completely join irreducible elements of lattices are counterpart of atoms in Boolean algebras, and play fundamental role in establishing isomorphism between  certain classes of lattice based algebra. An example of such can be seen in rough set representation \cite{JR11} of Nelson algebras. 
\begin{definition}\cite{davey}\label{cjijd} Let  $\mathcal{L}: =
(L,\vee,\wedge,0,1)$ be a complete lattice. \blr \item An element $a
\in L$ is said to be {\rm completely join irreducible}, if $a = \bigvee
S$ implies that $a \in S$, for every subset $S$ of $L$. \vskip 2pt 
 \begin{notation}\label{not1} Let
$\mathcal{J}_{L}$ denote the set of all   completely
join irreducible elements of $L$, and  $J(x) := \{a \in \mathcal{J}_{L}: a \leq x\}$, for any $x \in L$. \end{notation} 
\vskip 2pt 
\item A set $S$ is said to be {\rm join
dense} in $\mc L$, provided for every element $a \in L$, there is a subset $S^{\prime}$ of $S$ such that $a = \bigvee S^{\prime}$.
\elr
\end{definition}
The illustration of importance of completely join irreducible elements can be seen by a result of Birkhoff.
\begin{lemma} \cite{BIRK95} \label{lemma1}
Let $L$ and $K$ be two completely distributive lattices. Further, let $\mathcal{J}_{L}$ and $\mathcal{J}_{K}$ be join dense in $L$ and
$K$ respectively. Let $\phi:\mathcal{J}_{L} \rightarrow \mathcal{J}_{K}$ be an order isomorphism. Then the extension map $\Phi: L \rightarrow K$ given by \\
\hspace*{2 cm} $\Phi(x) := \bigvee(\phi(J(x)))$ (where $J(x) := \{a \in \mathcal{J}_{L}: a \leq x\}$), $x \in L$,\\
is a lattice isomorphism.
\end{lemma}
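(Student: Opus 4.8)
The plan is to prove this as a standard ``extension by join-density'' result, showing that the candidate map $\Phi$ is a well-defined order isomorphism whose inverse is the extension of $\phi^{-1}$, and then upgrading ``order isomorphism'' to ``lattice isomorphism.'' First I would verify that $\Phi$ is well defined: since $\mathcal{J}_L$ is join dense in $L$, the set $J(x) = \{a \in \mathcal{J}_L : a \leq x\}$ satisfies $x = \bigvee J(x)$ for every $x \in L$, so $\phi(J(x))$ is a subset of $\mathcal{J}_K$ and the join $\bigvee \phi(J(x))$ exists because $K$ is complete. This also fixes the interpretation of $\Phi$ unambiguously.

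Next I would establish that $\Phi$ is order-preserving and order-reflecting, which is the heart of the argument. Monotonicity is immediate: if $x \leq y$ then $J(x) \subseteq J(y)$, hence $\phi(J(x)) \subseteq \phi(J(y))$ (as $\phi$ is a bijection preserving membership), and taking joins gives $\Phi(x) \leq \Phi(y)$. For the reverse direction I would symmetrically define the extension $\Psi: K \rightarrow L$ of the inverse order-isomorphism $\phi^{-1}:\mathcal{J}_K \rightarrow \mathcal{J}_L$, and show $\Psi \circ \Phi = \mathrm{id}_L$ and $\Phi \circ \Psi = \mathrm{id}_K$. The key lemma here is that for a completely join irreducible element $a \in \mathcal{J}_L$ one has $J(a) = \{b \in \mathcal{J}_L : b \leq a\}$ and, crucially, $\Phi$ restricted to $\mathcal{J}_L$ agrees with $\phi$, i.e. $\Phi(a) = \phi(a)$ for $a \in \mathcal{J}_L$; this follows because $\phi$ is an order isomorphism so $\phi(J(a))$ has greatest element $\phi(a)$, whence its join is $\phi(a)$. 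Granting this, I would compute $\Psi(\Phi(x)) = \bigvee \phi^{-1}\big(J(\bigvee \phi(J(x)))\big)$ and reduce it to $\bigvee J(x) = x$ using that $\phi$ is an order isomorphism between the completely join irreducibles.

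The main obstacle, and the step I would treat most carefully, is this interchange: I need that $J\big(\bigvee \phi(J(x))\big) = \phi(J(x))$, i.e. that the completely join irreducibles below $\Phi(x)$ are exactly the images $\phi(a)$ for $a \in J(x)$. One inclusion is clear from monotonicity; the nontrivial inclusion requires that any completely join irreducible $c \leq \bigvee \phi(J(x))$ in $K$ already lies in $\phi(J(x))$. This is precisely where \emph{complete distributivity} of $K$ is used together with complete join irreducibility of $c$: complete distributivity lets one push $c \wedge (\cdot)$ through the join, and complete join irreducibility of $c$ then forces $c \leq \phi(a)$ for some single $a \in J(x)$, after which $c = \phi(a)$ follows because $c$ and $\phi(a)$ are both completely join irreducible and $\phi$ is an order isomorphism onto $\mathcal{J}_K$. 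Once this ``support'' identity is in hand, the two composites collapse to identities, giving bijectivity together with monotonicity of both $\Phi$ and its inverse.

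Finally I would promote the order isomorphism to a lattice isomorphism. Since $\Phi$ is an order isomorphism between lattices it automatically preserves all existing meets and joins: a bijection that preserves and reflects order carries $\sup$'s to $\sup$'s and $\inf$'s to $\inf$'s, so in particular $\Phi(x \vee y) = \Phi(x) \vee \Phi(y)$ and $\Phi(x \wedge y) = \Phi(x) \wedge \Phi(y)$, and likewise $\Phi(0) = 0$, $\Phi(1) = 1$. I would state this as the closing observation, so that no separate verification of the lattice operations is needed beyond the order-isomorphism property already proved.
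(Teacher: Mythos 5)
The paper offers no proof of this lemma at all: it is imported verbatim from Birkhoff \cite{BIRK95} and used as a black box, so there is nothing in the text to compare your argument against. Judged on its own, your proposal is the standard and correct way to prove it: extend $\phi^{-1}$ to $\Psi$, reduce everything to the ``support identity'' $J\bigl(\bigvee\phi(J(x))\bigr)=\phi(J(x))$, prove the nontrivial inclusion by pushing $c\wedge(\cdot)$ through the join via complete distributivity and invoking complete join irreducibility of $c$, and finally observe that an order isomorphism between complete lattices preserves all sups and infs. All of that is sound, and you are right that complete distributivity of $K$ is exactly the hypothesis that makes the support identity work.

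There is one localized error in the justification of that key step. From $c\le\phi(a)$ for some $a\in J(x)$ you conclude ``$c=\phi(a)$ follows because $c$ and $\phi(a)$ are both completely join irreducible.'' That inference is false: two completely join irreducible elements can be strictly comparable (in the chain $\textbf{3}$, or in Figure \ref{fig2} of this very paper, $f_i^a<f_i^1$ are both completely join irreducible). What you actually need, and what does hold, is only $c\in\phi(J(x))$: since $\phi$ is onto $\mathcal{J}_K$, write $c=\phi(b)$ with $b\in\mathcal{J}_L$; then $\phi(b)\le\phi(a)$ gives $b\le a\le x$, so $b\in J(x)$ and $c\in\phi(J(x))$. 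In other words, the repair uses that $J(x)$ is a down-set of $\mathcal{J}_L$ and that $\phi$ is surjective and order-reflecting, not that comparable join irreducibles coincide. With that one sentence corrected, the rest of your argument (the computation $\Psi(\Phi(x))=\bigvee\phi^{-1}(\phi(J(x)))=\bigvee J(x)=x$, its dual, and the upgrade from order isomorphism to lattice isomorphism) goes through as written.
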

\noindent In \cite{KB17} we characterized the completely join irreducible elements of lattices $\textbf{3}^{I}$ and $B^{[2]}$ ,where $B$ is a complete atomic Boolean algebra.
 Let $i,k \in I$. Denote by  $f_{i}^{x},~x \in \{a,1\},$ 
the following element in $\textbf{3}^{I}$. \vskip 2pt
\hspace*{3 cm}$f_{i}^{x}(k)  :=
\left\{
	\begin{array}{ll}
		x  & \mbox{if } k = i \\
		0 & otherwise
	\end{array}
\right.$  
\begin{proposition} \label{pp2} \cite{KB17}
\begin{enumerate}
\item The set of completely join irreducible  elements of {\rm $\textbf{3}^{I}$}  is given by:\\
\hspace*{2 cm}{\rm$\mathcal{J}_{\textbf{3}^{I}} = \{f_{i}^{a}, f_{i}^{1}: i \in I \}$}.\vskip 2pt
\noindent Moreover, {\rm$\mathcal{J}_{\textbf{3}^{I}}$} 
is join dense in {\rm$\textbf{3}^{I}$}.
\item Let $B$ be a complete atomic Boolean algebra. The set of completely join irreducible  elements  of $B^{[2]}$ is given by\\
\hspace*{2 cm} $\mathcal{J}_{B^{[2]}} = \{(0,a),(a,a): a \in \mathcal{J}_{B}\}$.\vskip 2pt 
\noindent Moreover, $\mathcal{J}_{B^{[2]}}$ is join dense in $B^{[2]}$.
\end{enumerate}
\end{proposition}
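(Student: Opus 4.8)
The plan is to use the standard characterization of complete join irreducibility in a complete lattice---an element $c$ is completely join irreducible precisely when $\bigvee\{x : x < c\} < c$---but to verify it directly from Definition \ref{cjijd}, since in both lattices joins are computed coordinatewise and this makes the membership test transparent.

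For part 1, I would first confirm that each $f_{i}^{a}$ and $f_{i}^{1}$ is completely join irreducible. Suppose $f_{i}^{a} = \bigvee S$ in $\textbf{3}^{I}$. As joins are pointwise, every $g \in S$ satisfies $g \leq f_{i}^{a}$, so $g$ vanishes off $i$ and $g(i) \in \{0,a\}$; since the chain $0 < a < 1$ forces the pointwise join at $i$ to reach $a$ only if some $g(i) = a$, we get $f_{i}^{a} \in S$. The same argument for $f_{i}^{1}$ uses that a join of elements of $\{0,a\}$ never reaches $1$, so some member must already equal $f_{i}^{1}$. Conversely, for arbitrary $f \in \textbf{3}^{I}$ I would exhibit the canonical decomposition $f = \bigvee\{f_{i}^{f(i)} : f(i) \neq 0\}$, which holds coordinatewise because each $f_{i}^{f(i)}$ is supported on $\{i\}$. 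This single identity does double duty: it shows $\mathcal{J}_{\textbf{3}^{I}}$ is join dense, and it shows that any $f$ whose support has size $\neq 1$ (including the bottom element, an empty join) is a join of strictly smaller join-irreducibles not containing $f$ itself, hence not completely join irreducible.

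Part 2 follows the same template, the extra ingredient being that $B$ is complete and atomic, so $\mathcal{J}_{B}$ is exactly the set of atoms of $B$ and every element of $B$ is the join of the atoms below it. To see that $(a,a)$ with $a \in \mathcal{J}_{B}$ is completely join irreducible, suppose $(a,a) = \bigvee S$; reading off the first coordinate gives $\bigvee\{x : (x,y) \in S\} = a$, and since each such $x \leq a$ with $a$ an atom, some $x$ must equal $a$, forcing the corresponding pair to be $(a,a) \in S$. For $(0,a)$ the first coordinate forces every $x = 0$, and the second coordinate, by atomicity of $a$, forces some $y = a$, giving $(0,a) \in S$. For the converse and join density I would record the coordinatewise decomposition $(x,y) = \bigvee\{(0,b) : b \in \mathcal{J}_{B},\ b \leq y\} \vee \bigvee\{(c,c) : c \in \mathcal{J}_{B},\ c \leq x\}$, valid because $\bigvee\{c : c \leq x\} = x$, $\bigvee\{b : b \leq y\} = y$, and $x \leq y$ makes the $(c,c)$-terms absorbed in the second coordinate. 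As in part 1, this shows both that $\mathcal{J}_{B^{[2]}}$ is join dense and that any completely join irreducible element must already lie among the displayed generators.

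The routine steps are the coordinatewise verifications; the one point deserving care, and the likely main obstacle, is the correct handling of arbitrary (possibly infinite) joins together with the completeness and atomicity of $B$, which is exactly what guarantees that the canonical decomposition of $(x,y)$ reproduces both coordinates. Once that decomposition is justified, everything else is bookkeeping on the three-element chain and on the atoms of $B$.
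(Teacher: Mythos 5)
Your argument is correct and complete: the paper itself states Proposition \ref{pp2} only with a citation to \cite{KB17} and gives no proof, and your direct verification (coordinatewise joins, the chain $0<a<1$ forcing membership, the canonical decompositions establishing both join density and the exclusion of all other elements, and the use of completeness and atomicity of $B$ to identify $\mathcal{J}_{B}$ with the atoms) is exactly the standard argument that the cited source carries out. No gaps.
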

\noindent  Figure \ref{fig2} shows the Hasse diagrams of {\rm$\mathcal{J}_{\textbf{3}^{I}}$} and $\mathcal{J}_{B^{[2]}}$.
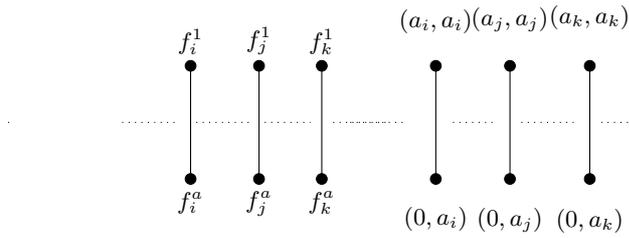
\begin{figure}[h] 
\begin{tikzpicture}[scale=.75]
    \tikzstyle{every node}=[draw,circle,fill=black,minimum size=4pt,
                            inner sep=0pt]

    \draw [dotted] (0,1) -- (0,1);
    \draw [dotted] (2,1) -- (3,1);
    
    \draw (3.2,0) -- (3.2,2);
		\draw (3.2,0) node [label=below:{\it $f_{i}^{a}$}]{};
		\draw (3.2,2) node [label=above:{\it $f_{i}^{1}$}]{};
		\draw [dotted] (3.3,1) -- (4.3,1);
		\draw (4.4,0) -- (4.4,2);
		\draw (4.4,0) node [label=below:{\it $f_{j}^{a}$}]{};
		\draw (4.4,2) node [label=above:{\it $f_{j}^{1}$}]{};
		\draw [dotted] (4.5,1) -- (5.3,1);
		\draw (5.5,0) -- (5.5,2);
		\draw (5.5,0) node [label=below:{\it $f_{k}^{a}$}]{};
		\draw (5.5,2) node [label=above:{\it $f_{k}^{1}$}]{};
		\draw [dotted] (5.7,1) -- (6.7,1);

		\draw [dotted] (6,1) -- (7,1);
  
    \draw (7.5,0) -- (7.5,2);
		\draw (7.5,0) node [label=below:{\it $(0,a_{i})$}]{};
		\draw (7.5,2) node [label=above:{\it $(a_{i},a_{i})$}]{};
		\draw [dotted] (7.8,1) -- (8.5,1);
		\draw (8.8,0) -- (8.8,2);
		\draw (8.8,0) node [label=below:{\it $(0,a_{j})$}]{};
		\draw (8.8,2) node [label=above:{\it $(a_{j},a_{j})$}]{};
		\draw [dotted] (9,1) -- (10,1);
		\draw (10.2,0) -- (10.2,2);
		\draw (10.2,0) node [label=below:{\it $(0,a_{k})$}]{};
		\draw (10.2,2) node [label=above:{\it $(a_{k},a_{k})$}]{};
		\draw [dotted] (10.4,1) -- (11,1);
  
\end{tikzpicture}
\caption{Hasse diagram of $\mathcal{J}_{\textbf{3}^{I}}$}
\label{fig2}
\end{figure} 
\noindent We also established the following isomorphism. 
\begin{theorem}\cite{KB17}
 The  sets of completely join irreducible elements of 
 {\rm $\textbf{3}^{I}$} and {\rm $(\textbf{2}^{I})^{[2]}$} are order isomorphic.
\end{theorem}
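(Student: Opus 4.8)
The plan is to read off both sides of the claimed isomorphism directly from Proposition \ref{pp2} and then match the two Hasse diagrams recorded in Figure \ref{fig2}. First I would observe that $\textbf{2}^{I}$ is a complete atomic Boolean algebra: it is the power-set algebra on $I$, and its atoms are precisely the functions $a_{i}$ ($i \in I$) defined by $a_{i}(k) = 1$ if $k = i$ and $a_{i}(k) = 0$ otherwise, so that $\mathcal{J}_{\textbf{2}^{I}} = \{a_{i} : i \in I\}$. Applying Proposition \ref{pp2}(2) with $B = \textbf{2}^{I}$ then yields
\[
\mathcal{J}_{(\textbf{2}^{I})^{[2]}} = \{(0,a_{i}),(a_{i},a_{i}) : i \in I\},
\]
while Proposition \ref{pp2}(1) gives $\mathcal{J}_{\textbf{3}^{I}} = \{f_{i}^{a}, f_{i}^{1} : i \in I\}$.

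Next I would define the candidate map $\phi : \mathcal{J}_{\textbf{3}^{I}} \to \mathcal{J}_{(\textbf{2}^{I})^{[2]}}$ by $\phi(f_{i}^{a}) = (0,a_{i})$ and $\phi(f_{i}^{1}) = (a_{i},a_{i})$. This is a bijection, since both sides are indexed bijectively by pairs $(i,x)$ with $i \in I$ and $x \in \{a,1\}$. To show $\phi$ is an order isomorphism it suffices to check that the comparability relations coincide on the two sides, and the key observation is that each of these posets is a disjoint union of $|I|$ two-element chains. On the $\textbf{3}^{I}$ side, $f_{i}^{a} \leq f_{i}^{1}$ holds componentwise (strictly, since $a < 1$ at the coordinate $i$), whereas for $i \neq j$ every pair drawn from the $i$-th and $j$-th chains is incomparable because $f_{i}^{x}$ and $f_{j}^{y}$ are nonzero functions with disjoint supports $\{i\}$ and $\{j\}$. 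On the $(\textbf{2}^{I})^{[2]}$ side, $(0,a_{i}) \leq (a_{i},a_{i})$ holds coordinatewise (strictly, since $0 \neq a_{i}$), and for $i \neq j$ the corresponding pairs are incomparable because distinct atoms $a_{i}, a_{j}$ of a Boolean algebra are pairwise incomparable. Hence both $\phi$ and $\phi^{-1}$ preserve order, and $\phi$ is the required order isomorphism.

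This verification is routine; the only point requiring any care — really the entire substance of the argument — is confirming that the two posets share exactly the same incomparability pattern across distinct indices, which reduces to the two elementary facts that the characteristic functions of distinct singletons are order-incomparable in $\textbf{3}^{I}$ and that distinct atoms are order-incomparable in $\textbf{2}^{I}$. I expect no genuine obstacle beyond this bookkeeping. Finally, I would remark that, combined with Lemma \ref{lemma1}, this order isomorphism of the join-dense sets of completely join irreducibles extends to a lattice isomorphism between $\textbf{3}^{I}$ and $(\textbf{2}^{I})^{[2]}$, which is the form in which the result is used later.
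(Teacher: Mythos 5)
Your proof is correct and follows essentially the same route as the source: the paper cites this result to \cite{KB17} without reproving it here, but its own proof of the analogous double Stone statement (Theorem \ref{thm12}) proceeds exactly as you do --- read off both sets of completely join irreducibles from the characterization in Proposition \ref{pp2}, define the coordinatewise map $f_{i}^{a}\mapsto(0,a_{i})$, $f_{i}^{1}\mapsto(a_{i},a_{i})$, verify it is an order isomorphism of the two disjoint unions of two-element chains, and invoke Lemma \ref{lemma1} to extend it. No gaps; your explicit check of the incomparability pattern across distinct indices is the right (and only) substantive point.
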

Now, we know that the pseudo and dual pseudo negations (if exist) are defined via the order of the given partially ordered sets. Moreover  Stone and dual Stone algebras are equational algebras, hence using Lemma \ref{lemma1}, we can deduce the following Theorem.  
\begin{theorem}\label{thm4}
\begin{enumerate}
 \item The  algebras {\rm $\textbf{3}^{I}$} and {\rm $(\textbf{2}^{I})^{[2]}$}
are lattice isomorphic.
\item The Stone (dual Stone) algebras {\rm $\textbf{3}^{I}$} and {\rm $(\textbf{2}^{I})^{[2]}$}
are  isomorphic.
\item Let $\mathcal{S} $ be an Stone  algebra. There exists a (index) set $I$ such that $\mathcal{S}$ can be 
embedded into Stone algebra {\rm $(\textbf{2}^{I})^{[2]}$}.

\item Let $\mathcal{DS}$ be a dual Stone algebra. There exists a (index) set $I$ such that $\mathcal{DS}$ can be 
embedded into dual Stone algebra {\rm $(\textbf{2}^{I})^{[2]}$}.
\end{enumerate}
\end{theorem}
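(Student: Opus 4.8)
The plan is to build everything on Lemma \ref{lemma1} and the completely-join-irreducible analysis already available. For part (1), I would first observe that both $\textbf{3}^I$ and $(\textbf{2}^I)^{[2]}$ are completely distributive lattices: $\textbf{3}$ and $\textbf{2}$ are finite chains (hence completely distributive), complete distributivity is preserved under arbitrary products, and the construction $B^{[2]}$ inherits complete distributivity from $B = \textbf{2}^I$ since it is a sublattice of $B \times B$ defined by a coordinatewise order condition with componentwise meets and joins. By Proposition \ref{pp2}, the sets $\mathcal{J}_{\textbf{3}^I}$ and $\mathcal{J}_{B^{[2]}}$ (with $B = \textbf{2}^I$, whose atoms are exactly the $f_i^1$-type singletons) are join dense in their respective lattices. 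The immediately preceding theorem gives an order isomorphism $\phi : \mathcal{J}_{\textbf{3}^I} \to \mathcal{J}_{(\textbf{2}^I)^{[2]}}$ sending $f_i^a \mapsto (0,a_i)$ and $f_i^1 \mapsto (a_i,a_i)$. Then Lemma \ref{lemma1} applies verbatim and yields a lattice isomorphism $\Phi : \textbf{3}^I \to (\textbf{2}^I)^{[2]}$ defined by $\Phi(x) = \bigvee \phi(J(x))$. This settles part (1).

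For part (2) I would upgrade the lattice isomorphism $\Phi$ to an isomorphism of Stone (resp. dual Stone) algebras. The key point, flagged in the paragraph preceding the theorem, is that the pseudocomplement $\sim$ and dual pseudocomplement $\neg$ are defined purely from the underlying order: $\sim a$ is the largest element meeting $a$ to $0$, and $\neg a$ is the least element joining $a$ to $1$. Since $\Phi$ is an order isomorphism, it automatically preserves these order-defined unary operations --- for any $a$, $\Phi(\sim a)$ is the largest element meeting $\Phi(a)$ to $\Phi(0)=(0,0)$, which is exactly $\sim \Phi(a)$, and dually for $\neg$. Because $\Phi$ also preserves $0$ and $1$ (being an order isomorphism between bounded lattices) and preserves $\vee,\wedge$ (part (1)), it is an isomorphism of the full Stone and dual Stone algebra signatures. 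This is the conceptual heart of the argument and requires no direct computation with the explicit formulas for $\sim(a,b)=(b^c,b^c)$ and $\neg(a,b)=(a^c,a^c)$, though one could verify compatibility with those formulas as a sanity check.

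For parts (3) and (4) I would simply compose. By Theorem \ref{fig1}'s companion (the Balbes--Dwinger-style result stated just above), any Stone algebra $\mathcal{S}$ embeds into $\textbf{3}_{\sim}^I$ for some index set $I$, and likewise any dual Stone algebra $\mathcal{DS}$ embeds into $\textbf{3}_{\neg}^I$. Composing such an embedding with the isomorphism $\textbf{3}^I \cong (\textbf{2}^I)^{[2]}$ from part (2) produces the desired embedding into $(\textbf{2}^I)^{[2]}$, for the Stone and dual Stone signatures respectively.

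The main obstacle I anticipate is the justification in part (2) that an order isomorphism between these bounded distributive lattices automatically transports the (dual) pseudocomplement. One must be careful that $\sim$ and $\neg$ are genuinely first-order-definable from the order alone (as maxima/minima of order-theoretically specified sets), so that any order isomorphism commutes with them; this is clean once stated correctly, but it is where the real content lies, and it is also the reason complete distributivity and join density were needed in part (1) rather than mere distributivity. A secondary technical point is confirming that $\mathcal{J}_{(\textbf{2}^I)^{[2]}}$ is genuinely join dense --- this is exactly Proposition \ref{pp2}(2) specialized to the complete atomic Boolean algebra $B = \textbf{2}^I$, whose completely join irreducible (atomic) structure is transparent.
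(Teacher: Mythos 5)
Your proposal is correct and follows essentially the same route as the paper: the paper's (very terse) justification is exactly to extend the order isomorphism of completely join irreducibles via Lemma \ref{lemma1}, observe that the pseudo and dual pseudo negations are order-definable and hence preserved by any lattice isomorphism, and obtain parts (3) and (4) by composing with the Birkhoff-style embedding into $\textbf{3}_{\sim}^{I}$ (resp.\ $\textbf{3}_{\neg}^{I}$). Your filling-in of the complete distributivity and join-density hypotheses needed for Lemma \ref{lemma1} is a welcome elaboration of details the paper leaves implicit.
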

\subsection{3-valued semantics of logic for Stone and  dual Stone  algebras}
As mentioned earlier, Moisil in 1941 (cf. \cite{Cignoli07}) proved that $B^{[2]}$ forms a 3-valued LM algebra. So, while discussing the  logic corresponding to the structures $B^{[2]}$, one is naturally led to  3-valued {\L}ukasiewicz logic. Our focus in this section is to study the logic corresponding to the classes  of Stone and dual Stone algebras and the structures $B^{[2]}_{\sim}$ and $B^{[2]}_{\neg}$. 
 Our approach to the study is 
motivated by Dunn's 4-valued semantics of the De Morgan consequence system \cite{Dunn99}: $\vDash_{0,1}$ (or $\vDash_{0}$ or $\vDash_{1}$), wherein valuations are defined in the De Morgan algebra $\textbf{4}$ (Figure \ref{fig7}).
\begin{figure}[h]
\begin{tikzpicture}[scale=.75]
                            inner sep=0pt]
    \draw [dotted] (0,1) -- (0,1);

		\draw (7,4) -- (5,2);
		\draw [fill] (5,2) circle [radius = 0.1];
		\node [left] at (4.75,2) {$n = \sim n$};
		\draw (7,0) -- (5,2);
		\draw (7,4) -- (9,2);
		\draw [fill] (9,2) circle [radius = 0.1];
		\node [right] at (9.25,2) {$b= \sim b$};
		\draw (7,0) -- (9,2);
		\draw [fill] (7,0) circle [radius = 0.1];
		\node [below] at (7,-0.25) {$f = \sim t$};
		
		\draw [fill] (7,4) circle [radius = 0.1];
		\node [above] at (7,4.25) {$t = \sim f$};

\end{tikzpicture}
\caption{De Morgan lattice $\textbf{4}$}
\label{fig7}
\end{figure}
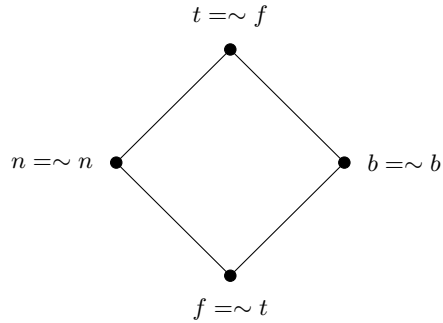

\noindent The 4-valued semantics arises from the fact that each element of a De Morgan algebra can be looked upon as a pair of sets. 
In this connection, we exploit Theorem \ref{thm4} to provide a 3-valued semantics of logic for Stone and  dual Stone  algebras. 
However, an easy consequence of   
Stone's representation theorem and Theorem \ref{thm4}, we have:
\begin{theorem}\label{cor1}
\begin{enumerate}
\item Given an Stone algebra $\mathcal{S} = (S,\vee,\wedge,\sim,0,1)$, there exist a set $U$ such that $\mathcal{S}$ can be embeddable into Stone algebra formed by 
$(\mathcal{P}(U))^{[2]}$. 
\item Given an dual Stone algebra $\mathcal{DS} = (DS,\vee,\wedge,\sim,0,1)$, there exist a set $U$ such that $\mathcal{DS}$ can be embeddable into dual Stone algebra formed by 
$(\mathcal{P}(U))^{[2]}$. 
\end{enumerate}
\end{theorem}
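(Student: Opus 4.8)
The plan is to reduce Theorem \ref{cor1} to the already-established Theorem \ref{thm4} by invoking Stone's representation theorem for Boolean algebras. I would prove part (1) and note that part (2) is entirely dual. The key observation is that Theorem \ref{thm4}(3) already gives an embedding of any Stone algebra $\mathcal{S}$ into $(\textbf{2}^{I})^{[2]}$ for a suitable index set $I$, so it suffices to realize $\textbf{2}^{I}$ as a power set algebra $\mathcal{P}(U)$ and then transport the $(\cdot)^{[2]}$ construction across that identification.

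First I would recall that $\textbf{2}$ is the two-element Boolean algebra, so $\textbf{2}^{I}$ is, for any index set $I$, isomorphic as a Boolean algebra to $\mathcal{P}(I)$ via the characteristic-function correspondence: a tuple $(x_i)_{i\in I}\in\textbf{2}^{I}$ maps to the set $\{i\in I: x_i=1\}\subseteq I$. This is the standard fact that a product of copies of $\textbf{2}$ is a complete atomic Boolean algebra of all subsets of the index set. Setting $U := I$, we obtain a Boolean isomorphism $\textbf{2}^{I}\cong\mathcal{P}(U)$. (More conceptually, one may simply apply Stone's representation theorem \cite{Stone36} to the underlying Boolean algebra, but since the embedding already lands in a power of $\textbf{2}$, the characteristic-function identification is the most direct route.)

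Next I would lift this Boolean isomorphism to the level of the $(\cdot)^{[2]}$ construction. Since $B^{[2]}=\{(a,b): a\leq b,\ a,b\in B\}$ with componentwise $\vee,\wedge$ and with the Stone negation $\sim(a,b)=(b^{c},b^{c})$ is defined purely in terms of the Boolean operations and order of $B$, any Boolean isomorphism $B\cong B'$ induces a Stone-algebra isomorphism $B^{[2]}\cong (B')^{[2]}$ by acting componentwise on ordered pairs: an ordered pair with $a\leq b$ is sent to an ordered pair that still satisfies the order constraint, and the induced map commutes with $\vee$, $\wedge$ and $\sim$ because these are given by Boolean-algebra terms preserved by the isomorphism. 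Applying this with the isomorphism $\textbf{2}^{I}\cong\mathcal{P}(U)$ yields a Stone isomorphism $(\textbf{2}^{I})^{[2]}\cong(\mathcal{P}(U))^{[2]}$.

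Composing the embedding $\mathcal{S}\hookrightarrow(\textbf{2}^{I})^{[2]}$ from Theorem \ref{thm4}(3) with this isomorphism gives the desired embedding $\mathcal{S}\hookrightarrow(\mathcal{P}(U))^{[2]}$, completing part (1); part (2) follows by the same argument using Theorem \ref{thm4}(4) and the dual Stone negation $\neg(a,b)=(a^{c},a^{c})$. I do not anticipate a serious obstacle here, as the theorem is explicitly flagged in the text as \emph{``an easy consequence of Stone's representation theorem and Theorem \ref{thm4}.''} The only point requiring mild care is verifying that the functor $B\mapsto B^{[2]}$ genuinely sends Boolean isomorphisms to Stone isomorphisms — that is, checking that the negation $\sim$ (and dually $\neg$) is preserved — but this is immediate from the termwise definition of $\sim$ via the Boolean complement, which is itself preserved by any Boolean isomorphism.
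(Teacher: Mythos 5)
Your proposal is correct and matches the paper's intended argument: the paper gives no explicit proof, stating only that the result is an easy consequence of Stone's representation theorem and Theorem \ref{thm4}, and your route --- embedding $\mathcal{S}$ into $(\textbf{2}^{I})^{[2]}$ via Theorem \ref{thm4}(3), identifying $\textbf{2}^{I}$ with $\mathcal{P}(I)$ by characteristic functions, and lifting that Boolean isomorphism through the $(\cdot)^{[2]}$ construction --- is exactly the intended filling-in of that claim. The only detail worth keeping explicit is the verification that $\sim$ (and dually $\neg$) is preserved, which you correctly reduce to the preservation of Boolean complement.
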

The 2-valued property of classical set theory arises from the fact that give a set $U$  and $A \subseteq U$, then  $\{A, A^{c}\}$ is a partition of the set $U$. So, for $x \in U$, either $x \in A$ or $x \in A^{c}$. Hence if $v$ is a valuation from classical propositional sentences to $\mathcal{P}(U)$ for some set $U$, then $v$ determine a class of 2-valued valuations $\{v_{x}: x \in U\}$  on classical propositional sentences, where where $v_{x}(\gamma) = 1$ if $x \in v(\gamma)$ and $v_{x}(\gamma) = 0$   if $x \notin v(\gamma)$. This shows the 2-valued semantics and set theoretic semantics of classical propositional are equivalent.   Now, the Stone's representation theorem for Boolean algebras guarantee that algebraic (Boolean algebras) semantics, 2 -valued semantics and set theoretic semantics of propositional logic are equivalent. In this section, we follow the same approach to establish the completeness results for $\mathcal{L}_{\mathcal{S}}$, $\mathcal{L}_{\mathcal{DS}}$  and $\mathcal{L}_{\mathcal{DBS}}$ (defined below).   




Let $\mathcal{F}_{\sim}$ and $\mathcal{F}_{\neg}$ be extensions of $\mathcal{F}$ by adding unary connectives $\sim$ and $\neg$ respectively.
 Let $\alpha,\beta \in \mathcal{F}_{\sim}$ and $\mathcal{L}_{S}$  denote the logic $BDLL$ along with following rules and postulates.
\begin{enumerate}
\item $\alpha \vdash \beta / \sim\beta \vdash  \sim\alpha $ \hspace{1 cm}(Contraposition) 
\item  $\sim \alpha \wedge \sim \beta \vdash \sim(\alpha \vee \beta)$ ($\vee$-linearity).
\item $\top \vdash \sim \bot$ (Nor).
\item  $\alpha \wedge \beta \vdash \gamma / \alpha \wedge \sim \gamma \vdash \sim \beta$,
\item $\alpha \wedge \sim\alpha \vdash \bot$,
\item $\top \vdash \sim \alpha \vee \sim \sim \alpha$,
\end{enumerate}
\noindent For $\alpha,\beta \in \mathcal{F}_{\neg}$, let $\mathcal{L}_{DS}$ denote the logic $BDLL$ along with following rules and postulates.
\begin{enumerate}
\item $\alpha \vdash \beta / \neg\beta \vdash  \neg\alpha$ \hspace{1 cm}(Contraposition) 
\item  $\neg(\alpha \wedge \beta)\vdash \neg \alpha \vee \neg \beta $ \hspace{1 cm} ($\wedge$-linearity).
\item $\neg \top \vdash \bot$
\item $\gamma \vdash \alpha \vee \beta / \neg \beta \vdash \alpha \vee \neg \gamma$.
\item $\top \vdash \alpha \vee \neg\alpha  $.
\item $\neg\alpha \wedge \neg\neg\alpha \vdash \bot$.
\end{enumerate}

\noindent Let $\mathcal{A}_{S}$ denote the class of all Stone algebras, $\mathcal{S}B^{[2]}$ denote the class of all Stone algebras formed by the set $B^{[2]}$ for all Boolean algebras $B$, $\mathcal{S}(\mathcal{P}(U)^{[2]}$ denote the class of all Stone algebras formed by the collection $\mathcal{P}(U)^{[2]}$ for all sets $U$. Now, utilizing Theorem \ref{thm4} and Corollary \ref{cor1}, in classical manner we have. 
\begin{theorem}\label{thm6} For $\alpha,\beta \in \mathcal{F}_{\sim}$, the followings are equivalent:
\begin{enumerate}
\item $\alpha \vdash_{\mathcal{L}_{S}} \beta$.  
\item $\alpha \vDash_{\mathcal{A}_{S}} \beta$.
\item $\alpha \vDash_{\mathcal{S}B^{[2]}} \beta$.
\item $\alpha \vDash_{\mathcal{S}(\mathcal{P}(U))^{[2]}} \beta$.
\end{enumerate}

\end{theorem}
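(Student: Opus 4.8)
The plan is to establish the chain of equivalences by proving a cycle of implications, following the classical pattern for soundness and completeness. The natural route is $(1)\Rightarrow(2)\Rightarrow(3)\Rightarrow(4)\Rightarrow(1)$. The implication $(2)\Rightarrow(3)$ is immediate, since the class $\mathcal{S}B^{[2]}$ is a subclass of $\mathcal{A}_S$ (each $B^{[2]}_\sim$ is a Stone algebra by the Proposition), so validity in all Stone algebras trivially yields validity in this subclass; likewise $(3)\Rightarrow(4)$ is immediate because every $(\mathcal{P}(U))^{[2]}$ arises as $B^{[2]}$ for the Boolean algebra $B=\mathcal{P}(U)$, making $\mathcal{S}(\mathcal{P}(U))^{[2]}$ a subclass of $\mathcal{S}B^{[2]}$. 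These two steps require no real work beyond noting the containments of the algebra classes.

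The soundness direction $(1)\Rightarrow(2)$ is routine but requires care: I would verify that each postulate and rule of $\mathcal{L}_S$ (on top of the already-sound $BDLL$ fragment) is valid in every Stone algebra under every valuation. Concretely, for each axiom I check the corresponding lattice inequality holds using the defining identities of a Stone algebra---for instance, Contraposition uses antitonicity of $\sim$, the $\vee$-linearity axiom corresponds to $\sim a \wedge \sim b \le \sim(a\vee b)$, and the axiom $\top \vdash \sim\alpha \vee \sim\sim\alpha$ is precisely the Stone identity $\sim a \vee \sim\sim a = 1$. Since valuations respect $\vee,\wedge,\sim$ and the bounds, validity of each rule follows from the corresponding algebraic fact, and transitivity/reflexivity of $\vdash$ match the order-theoretic properties of $\le$.

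The genuine obstacle is the completeness step $(4)\Rightarrow(1)$, or equivalently establishing that derivability follows from validity in the concrete set-based algebras $\mathcal{S}(\mathcal{P}(U))^{[2]}$. The standard strategy is a Lindenbaum--Tarski construction: build the free algebra of formulas modulo interprovability, show this quotient is a Stone algebra, and observe that if $\alpha \not\vdash_{\mathcal{L}_S}\beta$ then in the Lindenbaum algebra $[\alpha]\not\le[\beta]$. By Theorem \ref{cor1}, this Lindenbaum Stone algebra embeds into a set-based Stone algebra $(\mathcal{P}(U))^{[2]}$ for some $U$; composing the canonical valuation with this embedding yields a valuation in $\mathcal{S}(\mathcal{P}(U))^{[2]}$ that witnesses $\alpha \not\vDash \beta$. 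Thus the contrapositive of $(4)\Rightarrow(1)$ holds.

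The delicate point deserving the most attention is confirming that the Lindenbaum algebra is genuinely a Stone algebra, i.e.\ that the axioms of $\mathcal{L}_S$ suffice to force $\sim$ to behave as a pseudocomplement satisfying the Stone identity. Here axiom (5), $\alpha \wedge \sim\alpha \vdash \bot$, together with the rule (4) governing the maximality of $\sim$, should ensure $[\sim\alpha]$ is the pseudocomplement of $[\alpha]$, while axiom (6) delivers the Stone condition. I would check that these postulates were chosen precisely to pin down the pseudocomplement uniquely; the main risk is that they are too weak to recover full pseudocomplementation, so verifying the maximality property $\sim a = \max\{c : a\wedge c = 0\}$ from the rules is the step I expect to require the most scrutiny. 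Once the Lindenbaum algebra is confirmed to be Stone, the embedding machinery of Theorem \ref{cor1} closes the cycle.
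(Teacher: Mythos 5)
Your proposal is correct and follows exactly the route the paper intends: the paper gives no detailed argument, merely asserting the result "in classical manner" from Theorem \ref{thm4} and Theorem \ref{cor1}, which is precisely your cycle of trivial class containments, axiom-by-axiom soundness, and a Lindenbaum--Tarski algebra embedded into $(\mathcal{P}(U))^{[2]}$ via the representation theorem. Your flagged worry about recovering pseudocomplementation is resolved as you suspect: rule (4) with $\gamma=\bot$ together with $\top\vdash\sim\bot$ yields the maximality of $[\sim\beta]$, and axioms (5) and (6) supply the remaining Stone identities.
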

In similar manner utilizing Theorem \ref{thm4} and Theorem \ref{cor1}, we have results for the logic $\mathcal{L}_{DS}$ and dual Stone algebras.   Let $\mathcal{A}_{DS}$ denote the class of all dual Stone algebras, $\mathcal{DS}B^{[2]}$ denote the class of all dual Stone algebras formed by the set $B^{[2]}$ for all Boolean algebras $B$, $\mathcal{DS}(\mathcal{P}(U)^{[2]}$ denote the class of all dual Stone algebras formed by the collection $\mathcal{P}(U)^{[2]}$ for all sets $U$. 
\begin{theorem}\label{thm7} For $\alpha,\beta \in \mathcal{F}_{\neg}$, the followings are equivalent:
\begin{enumerate}
\item $\alpha \vdash_{\mathcal{L}_{DS}} \beta$.  
\item $\alpha \vDash_{\mathcal{A}_{DS}} \beta$.
\item $\alpha \vDash_{\mathcal{DS}B^{[2]}} \beta$.
\item $\alpha \vDash_{\mathcal{DS}(\mathcal{P}(U))^{[2]}} \beta$.

\end{enumerate}
\end{theorem}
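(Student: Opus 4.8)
The plan is to prove the cycle $(1) \Rightarrow (2) \Rightarrow (3) \Rightarrow (4) \Rightarrow (1)$, exactly paralleling the argument behind Theorem \ref{thm6}. The three forward implications are the soundness-plus-restriction directions and are routine. For $(1) \Rightarrow (2)$ I would check that each of the six postulates and rules of $\mathcal{L}_{DS}$ is valid under every valuation into an arbitrary dual Stone algebra: Contraposition holds because $\neg$ is order-reversing; $\wedge$-linearity is the dual-Stone identity $\neg(a \wedge b) = \neg a \vee \neg b$; postulate 3 gives $\neg 1 = 0$; postulate 5 is the defining property $a \vee \neg a = 1$ of the dual pseudocomplement; postulate 4 encodes its minimality; and postulate 6 is precisely the dual Stone axiom $\neg a \wedge \neg\neg a = 0$. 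Since each $\mathcal{B}_{\neg}^{[2]}$ is a dual Stone algebra (by the Proposition above), we have $\mathcal{DS}B^{[2]} \subseteq \mathcal{A}_{DS}$, and taking $B = \mathcal{P}(U)$ gives $\mathcal{DS}(\mathcal{P}(U))^{[2]} \subseteq \mathcal{DS}B^{[2]}$; hence $(2)\Rightarrow(3)$ and $(3)\Rightarrow(4)$ follow immediately, as validity in a class descends to any subclass.

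The substantive direction is $(4) \Rightarrow (1)$, which I would establish by the Lindenbaum--Tarski method combined with the representation Theorem \ref{cor1}. I would set $\alpha \equiv \beta$ iff $\alpha \vdash_{\mathcal{L}_{DS}} \beta$ and $\beta \vdash_{\mathcal{L}_{DS}} \alpha$, and verify that $\equiv$ is a congruence on $\mathcal{F}_{\neg}$ with respect to $\vee$, $\wedge$, and $\neg$; compatibility with $\neg$ is exactly where Contraposition (postulate 1) is used. The quotient $\mathcal{F}_{\neg}/{\equiv}$ then carries the induced operations, and the canonical valuation $v_0(\alpha) := [\alpha]$ satisfies $v_0(\alpha) \leq v_0(\beta)$ iff $\alpha \vdash_{\mathcal{L}_{DS}} \beta$. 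If I can show this quotient is a dual Stone algebra, then Theorem \ref{cor1}(2) embeds it into some $(\mathcal{P}(U))^{[2]}$; so whenever $\alpha \not\vdash_{\mathcal{L}_{DS}} \beta$ we have $v_0(\alpha) \not\leq v_0(\beta)$, and composing $v_0$ with the embedding yields a valuation into $(\mathcal{P}(U))^{[2]}$ that refutes $\alpha \vdash \beta$, giving $(4) \Rightarrow (1)$ contrapositively.

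The crux, and the step I expect to be the main obstacle, is verifying that $\mathcal{F}_{\neg}/{\equiv}$ is genuinely a dual Stone algebra---in particular that postulate 4 captures the \emph{minimality} clause of $\neg a = \min\{c : a \vee c = 1\}$ rather than some weaker order condition. That $\mathcal{F}_{\neg}/{\equiv}$ is a bounded distributive lattice follows from the $BDLL$ postulates. Postulate 5 reads as $[\alpha] \vee [\neg\alpha] = 1$. For minimality I would specialize $\gamma = \top$ in postulate 4: using $\neg\top \vdash \bot$ (postulate 3) together with the $BDLL$ rule $\bot \vdash \neg\top$ gives $[\neg\top] = 0$, so from $\top \vdash \alpha \vee \beta$ one obtains $\neg\beta \vdash \alpha$, i.e.\ $[\alpha]\vee[\beta]=1$ forces $[\neg\beta] \leq [\alpha]$; thus $[\neg\alpha]$ is exactly the least $c$ with $[\alpha]\vee c = 1$. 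Finally postulate 6 delivers $[\neg\alpha] \wedge [\neg\neg\alpha] = 0$, the dual Stone identity. Once the dual pseudocomplement is pinned down in this way, $\wedge$-linearity and the dual Stone property follow from postulates 2 and 6, and Theorem \ref{cor1} completes the argument; Theorem \ref{thm7} is then the evident dual of Theorem \ref{thm6}.
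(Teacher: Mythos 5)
Your proposal is correct and follows essentially the same route the paper intends: the paper gives no detailed argument for Theorem~\ref{thm7}, saying only that it follows ``in classical manner'' from Theorem~\ref{thm4} and Theorem~\ref{cor1}, and your soundness check plus Lindenbaum--Tarski construction composed with the embedding of Theorem~\ref{cor1}(2) is exactly that classical argument, with the minimality of the dual pseudocomplement correctly extracted from postulates 4 and 5. (Only a terminological quibble: $\neg(a\wedge b)=\neg a\vee\neg b$ holds in every dual pseudocomplemented distributive lattice, so it is not itself the dual Stone identity, but this does not affect the soundness of postulate 2.)
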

Now, let us define the following semantic consequence relations.
\begin{definition}\begin{enumerate}
\item Let $\alpha,\beta \in \mathcal{F}_{\sim}$.\vskip 2pt
\blr
\item   $\alpha \vDash^{S}_{1} \beta$ if and only  if, for all valuations $v$ in $\textbf{3}_{\sim}$ if  $v(\alpha) = 1$ then  $v(\beta) = 1$~~ {\rm (Truth preservation)}.
\item  $\alpha \vDash^{S}_{0} \beta$ if and only if, for all valuations $v$ in $\textbf{3}_{\sim}$ if $v(\beta) = 0$ then  $v(\alpha) = 0$  {\rm (Falsity preservation)}.
\item  $\alpha \vDash^{S}_{1,0} \beta$ if and only if,   $\alpha \vDash^{S}_{1} \beta$ {\it and} $\alpha \vDash^{S}_{0} \beta$.
\elr
\item Let $\alpha,\beta \in \mathcal{F}_{\neg}$.\vskip 2pt
\blr
\item   $\alpha \vDash^{DS}_{1} \beta$ if and only  if, for all valuations $v$ in $\textbf{3}_{\neg}$ if  $v(\alpha) = 1$ then  $v(\beta) = 1$~~ {\rm (Truth preservation)}.
\item   $\alpha \vDash^{DS}_{0} \beta$ if and only if, for all valuations $v$ in $\textbf{3}_{\neg}$ if $v(\beta) = 0$ then  $v(\alpha) = 0$  {\rm (Falsity preservation)}.
\item  $\alpha \vDash^{DS}_{1,0} \beta$ if and only if,  $\alpha \vDash^{DS}_{1} \beta$ {\it and} $\alpha \vDash^{DS}_{0} \beta$.
\elr
\end{enumerate}
\end{definition} 
\begin{proposition}\cite{Boicescu91} \label{pp3}
Let $\mathcal{S} = (S,\vee,\wedge,\sim,0,1)$ and $\mathcal{DS} = (DS,\vee,\wedge,\neg,0,1)$ be Stone and dual Stone algebra respectively, then for $a,b \in S$ and $x,y \in DS$
\blr
\item $\sim \sim( a \vee b) = \sim\sim a \vee \sim\sim b$ and $\sim \sim( a \wedge b) = \sim\sim a \wedge \sim\sim b$.
\item $\neg \neg( x \wedge y) = \neg\neg x \wedge \neg\neg y$ and $\neg \neg( x \vee y) = \neg\neg x \vee \sim\sim y$.
\elr
\end{proposition}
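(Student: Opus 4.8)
The plan is to prove both displayed identities by reducing them to a direct componentwise computation in $B^{[2]}$, using the embedding theorems just established. The only identity that genuinely needs the Stone axiom is the one for $\vee$ (resp. for $\wedge$ in the dual case); the companion identity holds in every bounded distributive pseudocomplemented lattice. I would therefore treat item $(i)$ in full and obtain item $(ii)$ by order duality, since $\mathcal{DS}$ is the order dual of a Stone algebra and $\neg$ plays there the role that $\sim$ plays in the original order.

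For the reduction route I would invoke Theorem \ref{thm4} to embed $\mathcal{S}$, \emph{as a Stone algebra}, into $(\textbf{2}^{I})^{[2]} = B^{[2]}$ with $B = \textbf{2}^{I}$. Both formulas in $(i)$ are equations in the signature $\{\vee,\wedge,\sim\}$, and a Stone-algebra embedding preserves all three operations; hence any such equation that holds in $B^{[2]}$ holds automatically in $\mathcal{S}$. It then remains only to verify the identities in $B^{[2]}$. Using the rule $\sim(a,b) = (b^{c},b^{c})$ established earlier for $B^{[2]}$, one computes $\sim\sim(a,b) = (b,b)$, so that for $(a,b),(c,d) \in B^{[2]}$ both sides of the $\vee$-identity collapse to $(b\vee d,\, b\vee d)$ and both sides of the $\wedge$-identity collapse to $(b\wedge d,\, b\wedge d)$. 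The dual statements in $(ii)$ follow in exactly the same way from the dual Stone embedding, now using $\neg(a,b) = (a^{c},a^{c})$ and $\neg\neg(a,b) = (a,a)$; I note in passing that the second identity in $(ii)$ should read $\neg\neg(x\vee y) = \neg\neg x \vee \neg\neg y$.

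Alternatively, I would give a self-contained equational proof that isolates where the Stone axiom enters. The key lemma is the de Morgan law $\sim(x\wedge y) = \sim x \vee \sim y$. The inequality $\sim x \vee \sim y \le \sim(x\wedge y)$ is automatic. For the reverse, use the Stone axiom to write $\sim(x\wedge y) = \sim(x\wedge y)\wedge(\sim x \vee \sim\sim x)$ and distribute: the first summand lies below $\sim x$, while for the second one checks that $\sim(x\wedge y)\wedge y \le \sim x$ (since its meet with $x$ is $0$), whence $\sim(x\wedge y)\wedge\sim\sim x$ has meet $0$ with $y$ and so lies below $\sim y$. Granting this law, the $\vee$-identity is immediate from the standard identity $\sim(a\vee b) = \sim a \wedge \sim b$, namely $\sim\sim(a\vee b) = \sim(\sim a \wedge \sim b) = \sim\sim a \vee \sim\sim b$; the $\wedge$-identity needs no appeal to the Stone axiom and is obtained by the two inclusions in any bounded distributive pseudocomplemented lattice.

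I expect the $\vee$-identity to be the only real obstacle, precisely because it is the place where the Stone property $\sim x \vee \sim\sim x = 1$ is indispensable. In the reduction route the sole subtlety is to be sure that the embedding preserves $\sim$ and not merely the lattice structure, so that the equations genuinely transfer; in the direct route the delicate point is the bound $\sim(x\wedge y)\wedge\sim\sim x \le \sim y$, which is exactly where the Stone axiom is consumed.
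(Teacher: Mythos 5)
The paper does not actually prove Proposition \ref{pp3}: it is stated with a citation to \cite{Boicescu91} and used as a black box (in Lemma \ref{lemma2}), so there is no in-paper argument to match yours against. That said, both of your routes are correct, and neither is circular within the paper's development: Theorem \ref{thm4} is obtained from Birkhoff's subdirect representation and Lemma \ref{lemma1}, with no appeal to Proposition \ref{pp3}, so invoking it here is legitimate. Your reduction route is very much in the spirit of the paper's overall method (everything is pushed into $B^{[2]}$ and computed componentwise via $\sim(a,b)=(b^{c},b^{c})$, hence $\sim\sim(a,b)=(b,b)$), and it buys a two-line verification at the cost of resting on the full strength of the representation machinery; you are right that the only point needing care is that the embedding is a homomorphism for the whole signature, including $\sim$, which it is. Your second, equational route is the more informative one: the derivation of the de Morgan law $\sim(x\wedge y)=\sim x\vee\sim y$ from $\sim x\vee\sim\sim x=1$ by distributing over $\sim(x\wedge y)\wedge(\sim x\vee\sim\sim x)$ is sound (the bound $\sim(x\wedge y)\wedge\sim\sim x\le\sim y$ follows exactly as you say), the $\vee$-identity then drops out via $\sim(a\vee b)=\sim a\wedge\sim b$, and the $\wedge$-identity indeed holds already in any pseudocomplemented distributive lattice. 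This version correctly isolates where the Stone axiom is consumed, which the citation-only treatment in the paper leaves invisible. Finally, your observation that the last identity in item (ii) should read $\neg\neg(x\vee y)=\neg\neg x\vee\neg\neg y$ is correct; as printed, $\sim\sim y$ is a typo, since $\sim$ is not even an operation of $\mathcal{DS}$.
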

\begin{lemma}\label{lemma2}
\begin{enumerate}
\item For $\alpha,\beta \in \mathcal{F}_{\sim}$, if  $\alpha \vDash^{S}_{1} \beta$ then $\alpha \vDash^{S}_{0} \beta$.
\item For $\alpha,\beta \in \mathcal{F}_{\neg}$, if  $\alpha \vDash^{DS}_{0} \beta$ then $\alpha \vDash^{DS}_{1} \beta$.
\end{enumerate}
\end{lemma}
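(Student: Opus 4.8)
The plan is to exploit the double pseudocomplement $\sim\sim$ (for part 1) and the double dual pseudocomplement $\neg\neg$ (for part 2) as endomorphisms of $\textbf{3}$ whose image is the two-element Boolean skeleton $\{0,1\}$, and to use them to convert an arbitrary valuation into a two-valued one. Concretely, in $\textbf{3}_{\sim}$ one computes $\sim\sim 0 = 0$, $\sim\sim a = 1$, $\sim\sim 1 = 1$, so that for every $x\in\textbf{3}$ we have $\sim\sim x = 1$ exactly when $x\neq 0$ and $\sim\sim x = 0$ exactly when $x = 0$. Dually, in $\textbf{3}_{\neg}$ one gets $\neg\neg 0 = 0$, $\neg\neg a = 0$, $\neg\neg 1 = 1$, so $\neg\neg x = 1$ exactly when $x = 1$ and $\neg\neg x = 0$ exactly when $x\neq 1$. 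Thus $\sim\sim$ detects ``being nonzero'' and $\neg\neg$ detects ``being one'', which is precisely the distinction between the truth- and falsity-preservation conditions.

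For part 1, the first step is to show that whenever $v$ is a valuation in $\textbf{3}_{\sim}$, so is $v' := \sim\sim\circ\, v$. Compatibility with $\wedge$ and $\vee$ is immediate from Proposition \ref{pp3}(1), since $\sim\sim$ preserves both meets and joins, and compatibility with the constants follows from $\sim\sim 0 = 0$ and $\sim\sim 1 = 1$. The one point needing care is compatibility with $\sim$: using the identity $\sim\sim\sim = \sim$ (valid in every $p$-algebra and directly checkable on $\textbf{3}$), we have $v'(\sim\alpha) = \sim\sim(v(\sim\alpha)) = \sim\sim\sim v(\alpha) = \sim v(\alpha)$, while $\sim v'(\alpha) = \sim\sim\sim v(\alpha) = \sim v(\alpha)$, so the two agree. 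With $v'$ confirmed to be a two-valued valuation, I argue by contraposition: assume $\alpha \vDash^{S}_{1}\beta$ and suppose, for contradiction, that falsity preservation fails, i.e. there is a valuation $v$ with $v(\beta) = 0$ but $v(\alpha)\neq 0$. Then $v'(\alpha) = \sim\sim v(\alpha) = 1$ and $v'(\beta) = \sim\sim 0 = 0 \neq 1$, contradicting $\alpha \vDash^{S}_{1}\beta$ applied to the valuation $v'$. Hence $\alpha \vDash^{S}_{0}\beta$.

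Part 2 is the exact dual. One shows that $v' := \neg\neg\circ\, v$ is a valuation in $\textbf{3}_{\neg}$, with meet- and join-preservation coming from Proposition \ref{pp3}(2), the constants from $\neg\neg 0 = 0$ and $\neg\neg 1 = 1$, and $\neg$-compatibility from $\neg\neg\neg = \neg$. Assuming $\alpha \vDash^{DS}_{0}\beta$, suppose some valuation $v$ had $v(\alpha) = 1$ and $v(\beta)\neq 1$; then $v'(\alpha) = \neg\neg 1 = 1$ while $v'(\beta) = \neg\neg v(\beta) = 0$ (since $v(\beta)\in\{0,a\}$), contradicting $\alpha \vDash^{DS}_{0}\beta$ at $v'$. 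Hence $\alpha \vDash^{DS}_{1}\beta$.

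The technical heart of the argument is the verification that pre-composition with the double (dual) pseudocomplement yields a genuine valuation, and in particular that it commutes with $\sim$ (respectively $\neg$); this is where the triple-negation identities $\sim\sim\sim = \sim$ and $\neg\neg\neg = \neg$ are essential. Once this is settled, the remainder is bookkeeping: the map $v'$ collapses $\textbf{3}$ onto $\{0,1\}$ in exactly the way that links the $1$-preservation and $0$-preservation clauses, and the stated implications follow by the contrapositive argument above.
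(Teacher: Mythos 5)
Your proof follows essentially the same route as the paper: both convert a given valuation $v$ into the two-valued valuation $\sim\sim\circ\, v$ (resp.\ $\neg\neg\circ\, v$) and derive a contradiction with truth (resp.\ falsity) preservation. If anything, you are more careful than the paper, which asserts that $v^{*}(\sim\gamma)=\sim v^{*}(\gamma)$ ``follows immediately'' from Proposition~\ref{pp3}; as you correctly note, that step actually requires the triple-negation identity $\sim\sim\sim\,=\,\sim$ (resp.\ $\neg\neg\neg\,=\,\neg$), which Proposition~\ref{pp3} alone does not supply.
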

\begin{proof}
\begin{enumerate}
\item Let $\alpha \vDash^{S}_{1} \beta$, and $v$ be a valuation in  $\textbf{3}_{\sim}$ such that $v(\beta) = 0$. As $\alpha \vDash^{S}_{1} \beta$, so $v(\alpha) \neq 1$. If $v(\alpha) = 0$, then our work is done. So, assume that $v(\alpha) = a$. Define a map $v^{*}: \mathcal{F}_{\sim} \rightarrow \textbf{3}_{\sim} $ as:\\
\hspace*{3cm} $v^{*}(\gamma) = \sim\sim v(\gamma)$. 

\noindent Let us show that $v^{*}$ is indeed a valuation $\textbf{3}_{\sim}$. For this, we have to show that $v^{*}(\gamma \wedge \delta) = v^{*}(\gamma) \wedge v^{*}(\delta)$, $v^{*}(\gamma \vee \delta) = v^{*}(\gamma) \vee v^{*}(\delta)$, $v^{*}(\sim \gamma) = \sim v^{*}(\gamma)$, $v^{*}(\bot) = 0$ and $v^{*}(\top) = 1$, but this follows immediately from Proposition \ref{pp3}. 

Hence $v^{*}$ is a valuation and $v^{*}(\alpha) = 1$ and $v^{*}(\beta) = 0$ but this  contradicts to the fact that $\alpha \vDash^{S}_{1} \beta$. So, $\alpha \vDash^{S}_{1} \beta$ implies $\alpha \vDash^{S}_{0} \beta$.

\item Now, let $\alpha \vDash^{DS}_{0} \beta$, and $v$ be a valuation in  $\textbf{3}_{\neg}$ such that $v(\alpha) = 1$. As $\alpha \vDash^{DS}_{0} \beta$, so $v(\beta) \neq 0$. If $v(\beta) = 1$, then our work is done. So, assume that $v(\beta) = a$. In a similar fashion as in previous case, define a map $v^{*}: \mathcal{F}_{\neg} \rightarrow \textbf{3}_{\neg} $ as:\\
\hspace*{3cm} $v^{*}(\gamma) = \neg\neg v(\gamma)$. 

\noindent Similar to the previous case, using Proposition \ref{pp3}, we can easily established that $v^{*}$ is indeed a valuation $\textbf{3}_{\neg}$. This arises a contradiction to $\alpha \vDash^{DS}_{0} \beta$.


\end{enumerate}
\end{proof} \qed
\noindent Note that converse of the above statements are not true, for example 
$\sim\sim\alpha \vDash^{S}_{0} \alpha$ but $\sim\sim\alpha \nvDash^{S}_{1} \alpha$ and 
$\beta \nvDash^{DS}_{0} \neg\neg\beta$ but $\beta \vDash^{DS}_{1} \neg\neg\beta$. This is in contrary to the Dunn's consequence relations $\vDash_{0}$, $\vDash_{1}$ and $\vDash_{0,1}$ where all these three turn out be equivalent. 
\begin{theorem}
\begin{enumerate}
\item $\alpha \vDash_{\mathcal{SP}(U))^{[2]}} \beta$ if and only if $\alpha \vDash^{S}_{1} \beta$, for any $\alpha,\beta \in \mathcal{F}_{\sim}$.
\item $\alpha \vDash_{\mathcal{DSP}(U)^{[2]}} \beta$ if and only if $\alpha \vDash^{DS}_{0} \beta$, for any $\alpha,\beta \in \mathcal{F}_{\neg}$.
\end{enumerate}
\end{theorem}
\begin{proof}
\begin{enumerate}
\item Let $\alpha \vDash_{\mathcal{S}_{(\mathcal{P}(U))^{[2]}}} \beta$, and
 $v: \mathcal{F} \rightarrow \textbf{3}$ be a valuation. By Theorem \ref{cor1},
$\textbf{3}_{\sim}$ is embeddable to a Stone algebra of $\mathcal{P}(U)^{[2]}$ for some set $U$. If this embedding is denoted by $\phi$,  $\phi \circ v$ is  a valuation in  $\mathcal{P}(U)^{[2]}$. Then $(\phi \circ v)(\alpha) \leq (\phi \circ v)(\beta)$ implies $v(\alpha) \leq  v(\beta)$. Thus if  $v(\alpha)=1$, we have $v(\beta)=1$.\vskip 2pt

\noindent Now, let $\alpha \vDash^{S}_{1} \beta$. Let $U$ be a set, and $\mathcal{P}(U)^{[2]}$ be the corresponding Stone algebra.
  Let $v$ be a valuation on $\mathcal{P}(U)^{[2]}$ -- we need to show $v(\alpha) \leq v(\beta)$. For any $\gamma \in \mathcal{F}$ with $v(\gamma) := (A,B),~A,B \subseteq U,$ and for each $x \in U$, define a map $v_{x}: \mathcal{F} \rightarrow \textbf{3}_{\sim}$ as\\
\hspace*{2 cm}$v_{x}(\gamma)  :=
\left\{
	\begin{array}{lll}
		1 & \mbox{if }   x \in A \\
		a & \mbox{if } x \in B  \setminus A  \\
		0 & \mbox{if } x \notin B .
	\end{array}
\right.$ \vskip 2pt 
\noindent  Consider any $\gamma, \delta \in \mathcal{F}$, with  $v(\gamma) := (A,B)$ and $v(\delta) := (C,D),~A,B,C,D \subseteq U$. It is easy to show that (for a complete proof, we refer to  \cite{KB17}), $v_{x}(\gamma \wedge \delta) = v_{x}(\gamma) \wedge v_{x}(\delta)$, $v_{x}(\gamma \vee \delta) = v_{x}(\gamma) \vee v_{x}(\delta)$.
Let us show the following: $v_{x}(\sim\gamma) = \sim v_{x}(\gamma)$.

\noindent Note that $ v(\sim \gamma)= (B^{c},B^{c})$.\\
\noindent \un{Case 1} $v_{x}(\gamma) = 1$: Then $x \in A$, i.e. $x \notin A^{c}$ and so $x \notin B^{c}$. Hence 
          $v_{x}(\sim\gamma)  = 0 = \sim v_{x}(\gamma)$.\\
\noindent \un{Case 2} $v_{x}(\gamma) = a$:  $x \notin A$ but $x \in B$, so  $x \notin B^{c}$. Hence 
          $v_{x}(\sim\gamma)  = 0 = \sim v_{x}(\gamma)$.\\	
\noindent \un{Case 3} $v_{x}(\gamma) = 0$:  $x \notin B$, i.e. $x \in B^{c}$. So 
          $v_{x}(\sim\gamma)  = 1 = \sim v_{x}(\gamma)$.									


\noindent Hence $v_{x}$ is a valuation in $\textbf{3}_{\sim}$. Now let us show that $v(\alpha) \leq v(\beta)$. Let  $v(\alpha) := (A^{\prime},B^{\prime})$, $v(\beta) := (C^{\prime},D^{\prime})$, and $x \in A^{\prime}$. Then $v_{x}(\alpha) = 1$, and as $\alpha \vDash^{S}_{1} \beta$, by definition, $v_{x}(\beta) = 1$. This implies $x \in C^{\prime}$, whence $A^{\prime} \subseteq C^{\prime}$.\\
On the other hand, if $x \notin D^{\prime}$, $v_{x}(\beta) = 0$. Then using Lemma \ref{lemma2}, we have   $v_{x}(\alpha) = 0$, so that $x \notin B^{\prime}$, giving  $B^{\prime} \subseteq D^{\prime}$.

\item We prove second part only.   For this let $\alpha \vDash^{DS}_{0} \beta$. Let $U$ be a set, and $\mathcal{P}(U)^{[2]}$ be the corresponding dual Stone algebra.
  Let $v$ be a valuation on $\mathcal{P}(U)^{[2]}$,  we  show that $v(\alpha) \leq v(\beta)$. Very similar to previous case, for any $\gamma \in \mathcal{F}$ with $v(\gamma) := (A,B),~A,B \subseteq U,$ and for each $x \in U$, define a map $v_{x}: \mathcal{F} \rightarrow \textbf{3}_{\neg}$ as\\
\hspace*{2 cm}$v_{x}(\gamma)  :=
\left\{
	\begin{array}{lll}
		1 & \mbox{if }   x \in A \\
		a & \mbox{if } x \in B  \setminus A  \\
		0 & \mbox{if } x \notin B .
	\end{array}
\right.$ \vskip 2pt 
\noindent  Consider any $\gamma, \delta \in \mathcal{F}$, with  $v(\gamma) := (A,B)$ and $v(\delta) := (C,D),~A,B,C,D \subseteq U$. Let us show the following: $v_{x}(\neg\gamma) = \neg v_{x}(\gamma)$.

\noindent Note that $ v(\neg \gamma)= (A^{c},A^{c})$.\\
\noindent \un{Case 1} $v_{x}(\gamma) = 1$: Then $x \in A$, i.e. $x \notin A^{c}$. Hence 
          $v_{x}(\neg\gamma)  = 0 = \neg v_{x}(\gamma)$.\\
\noindent \un{Case 2} $v_{x}(\gamma) = a$:  $x \notin A$ but $x \in B$, so  $x \in A^{c}$. Hence 
          $v_{x}(\neg\gamma)  = 0 = \neg v_{x}(\gamma)$.\\	
\noindent \un{Case 3} $v_{x}(\gamma) = 0$:  $x \notin B$, and so $x \notin A$, i.e. $x \in A^{c}$. So 
          $v_{x}(\neg\gamma)  = 1 = \neg v_{x}(\gamma)$.									


\noindent Hence $v_{x}$ is a valuation in $\textbf{3}_{\neg}$. To complete the proof let us show that $v(\alpha) \leq v(\beta)$. Let  $v(\alpha) := (A^{\prime},B^{\prime})$, $v(\beta) := (C^{\prime},D^{\prime})$, and $x \in A^{\prime}$. Then $v_{x}(\alpha) = 1$, and as $\alpha \vDash^{DS}_{1} \beta$, by Lemma \ref{lemma2}, $v_{x}(\beta) = 1$. This implies $x \in C^{\prime}$, whence $A^{\prime} \subseteq C^{\prime}$.\\
On the other hand, if $x \notin D^{\prime}$, $v_{x}(\beta) = 0$. Then by our assumption $\alpha \vDash^{DS}_{0} \beta$, we have   $v_{x}(\alpha) = 0$, so that $x \notin B^{\prime}$, giving  $B^{\prime} \subseteq D^{\prime}$.
\end{enumerate}
\end{proof}\qed

\noindent Note that in proof of previous Theorem, while defining the valuations $v_{x}$ we have utilized the fact that if  $U$ is a set and $A,B \subseteq U$ with $A \subseteq B$, then the collection $\{A,B\setminus A, B^{c}\}$ is a partition of $U$. So, for $x \in U$, either $x \in A$ or $x \in B\setminus A$ or $x \in B^{c}$. Nevertheless, finally we have the following Theorem.
\begin{theorem}\label{thm9}(3-valued semantics)
\begin{enumerate}
\item $\alpha \vdash_{\mathcal{L}_{S}} \beta$ if and only if $\alpha \vDash^{S}_{t} \beta$.
\item $\alpha \vdash_{\mathcal{L}_{DS}} \beta$ if and only if $\alpha \vDash^{DS}_{f} \beta$.

\end{enumerate}
\end{theorem}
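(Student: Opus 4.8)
The plan is to establish Theorem \ref{thm9} by chaining together the equivalences already proved in the preceding theorems, so that the new 3-valued semantics $\vDash^{S}_{t}$ and $\vDash^{DS}_{f}$ get identified with the syntactic consequence relations $\vdash_{\mathcal{L}_S}$ and $\vdash_{\mathcal{L}_{DS}}$ through the algebraic and set-theoretic intermediaries. The key observation is that the truth-preservation relation $\vDash^{S}_{1}$ (with $1 = t$) and the $\mathcal{P}(U)^{[2]}$-validity relation have just been shown equivalent in the Theorem immediately preceding this one, while Theorem \ref{thm6} already equates $\vDash_{\mathcal{S}(\mathcal{P}(U))^{[2]}}$ with the syntactic $\vdash_{\mathcal{L}_S}$.

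For part 1, I would argue as follows. First I would note that in the algebra $\textbf{3}_{\sim}$ the designated value for truth is $1$, so the relation written $\vDash^{S}_{t}$ in the statement is exactly the truth-preservation relation $\vDash^{S}_{1}$ from the definition. By the Theorem immediately above, we have $\alpha \vDash_{\mathcal{S}(\mathcal{P}(U))^{[2]}} \beta$ if and only if $\alpha \vDash^{S}_{1} \beta$. Then Theorem \ref{thm6} gives the chain
\begin{equation*}
\alpha \vdash_{\mathcal{L}_S} \beta \iff \alpha \vDash_{\mathcal{A}_S} \beta \iff \alpha \vDash_{\mathcal{S}B^{[2]}} \beta \iff \alpha \vDash_{\mathcal{S}(\mathcal{P}(U))^{[2]}} \beta,
\end{equation*}
and combining the two equivalences yields $\alpha \vdash_{\mathcal{L}_S} \beta \iff \alpha \vDash^{S}_{t} \beta$, which is part 1. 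Part 2 is entirely dual: the falsity-preservation relation $\vDash^{DS}_{0}$ is what is denoted $\vDash^{DS}_{f}$ (since $0 = f$), the preceding Theorem supplies $\alpha \vDash_{\mathcal{DS}(\mathcal{P}(U))^{[2]}} \beta \iff \alpha \vDash^{DS}_{0} \beta$, and Theorem \ref{thm7} supplies the corresponding chain of equivalences for dual Stone algebras, so the same composition closes the argument.

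The only genuine content, rather than bookkeeping, is matching the named designated-value relations $\vDash^{S}_{t}$, $\vDash^{DS}_{f}$ to the indexed relations $\vDash^{S}_{1}$, $\vDash^{DS}_{0}$ of the definition, and justifying that truth preservation is the natural choice for the Stone side while falsity preservation is natural for the dual Stone side. This asymmetry is exactly the point flagged after Lemma \ref{lemma2}: the converses there fail, so $\vDash^{S}_{1}$ and $\vDash^{S}_{0}$ are \emph{not} interchangeable, and one must pick the correct one. The main (mild) obstacle is therefore to be careful that the embedding $\phi$ used in the preceding Theorem preserves the designated value $1$ on the Stone side and the anti-designated value $0$ on the dual Stone side, so that the semantic consequence really transports to the three-element algebra in the stated direction; once that is noted the result is an immediate concatenation of established equivalences and requires no further calculation.
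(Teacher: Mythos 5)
Your proof is correct and follows exactly the route the paper intends: Theorem \ref{thm9} is presented there as an immediate consequence of chaining Theorem \ref{thm6} (resp. Theorem \ref{thm7}) with the theorem immediately preceding it that identifies $\vDash_{\mathcal{S}(\mathcal{P}(U))^{[2]}}$ with $\vDash^{S}_{1}$ (resp. $\vDash_{\mathcal{DS}(\mathcal{P}(U))^{[2]}}$ with $\vDash^{DS}_{0}$), together with the notational identification $t=1$, $f=0$. Your added remark about the asymmetry flagged after Lemma \ref{lemma2} correctly explains why truth-preservation is the right choice on the Stone side and falsity-preservation on the dual Stone side.
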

\subsection{Rough set models for  3-valued logics}
Rough set theory, introduced by Pawlak \cite{pawlak82} in 1982, deals with a domain $U$ that is the set of objects, and an equivalence ({\it indiscernibility}) relation $R$
on $U$. 
The pair $(U,R)$ is called an (Pawlak) {\it approximation space}.  
For any $A \subseteq U$, one defines the {\it lower} and {\it upper approximations} of $A$ in the approximation space $(U,R)$, denoted ${\sf L}A$ and ${\sf U}A$ respectively,  as follows.\\
\hspace*{2 cm} ${\sf L}A := \bigcup \{[x]: [x]\subseteq X\}$,\\
\hspace*{2 cm} ${\sf U}A := \bigcup \{[x]: [x] \cap X \neq \emptyset\}$. \hspace{1 cm} \hfill{(*)}\\
As the information about the objects of the domain is available modulo the equivalence classes in $U$, the description of any concept, represented extensionally as the subset $A$ of $U$, is inexact. One then  `approximates' the description from within and outside, through the lower and upper approximations respectively.
\noindent Unions of equivalence classes are termed as {\it definable} sets, signifying exactly describable concepts in the context of the given information. In particular, sets of the form 
${\sf L}A$, ${\sf U}A$ are definable sets. 
\begin{definition}
Let $(U,R)$ be an approximation space. For each $A \subseteq U$, the ordered pair $({\sf L}A,{\sf U}A)$ is called a {\rm rough set} in $(U,R)$.
\begin{notation}  $\mathcal{RS} := \{({\sf L}A,{\sf U}A): A \subseteq U\}$.
\end{notation}
The ordered pair $(D_{1},D_{2})$, where $D_{1} \subseteq D_{2}$ and $D_{1},D_{2}$ are definable sets, is called a {\rm generalized rough set} in $(U,R)$.
 \begin{notation}  $\mathcal{D} $ denotes the collection of definable sets and $\mathcal{R} $  that of the generalized rough sets in $(U,R)$.\end{notation}
\end{definition}
 In rough set theory, if $(U,R)$ is an approximation space and $A \subseteq U$ then  $\{{\sf L}A,{\sf U}A \setminus {\sf L}A, ({\sf U}A)^{c}\}$
is a partition of $U$. So for any $x \in U$, one of the following is true: either $x \in {\sf L}A$ or $x \in {\sf U}A \setminus {\sf L}A$ or $x \in ({\sf U}A)^{c}\}$. This fact leads to the following interpretations in rough set theory.
\begin{enumerate}
\item \label{t} $x$  {\it certainly} belongs to $A$, if $x \in {\sf L}A$,  i.e. all  objects which are indiscernible to $x$
are in $A$.

\item \label{f} $x$  {\it certainly does not} belong to $A$, if $x \notin {\sf U}A$,  i.e. all objects which are indiscernible to $x$
are not in  $A$.

\item \label{p} Belongingness of $x$ to $A$ is {\it not certain, but possible}, 
if $x \in {\sf U}A$ but $x \notin {\sf L}A$. In rough set terminology, this is the case when $x$ is in the {\it boundary} of $A$: some objects indiscernible
to $x$ are in $A$, while some others, also indiscernible to $x$, are in $A^{c}$.
\end{enumerate}
\noindent The phrase `certainly' in the above interpretations derived because of indiscernible behavior of the equivalence relation $R$. For any set $A \subseteq U$, ${\sf L}A, ({\sf U}A)^{c}$ are certain regions and the set  ${\sf U}A \setminus {\sf L}A$ is uncertain region.   

 Nevertheless, 
these interpretations have led to much work in the study of connections between 3-valued algebras or logics and rough sets, see for instance  \cite{Banerjee97,Dunt97,Pa98,Iturr99,Avron08,ciucci14}.
In \cite{KB17} author came up with a logic $\mathcal{L}_{K}$, which explicitly capture these interpretations via a 3-valued semantics. It also worth mention here the work of Avron and Konikowska, in \cite{Avron08}, they have 
 obtained a non-deterministic logical matrix and studied the 3-valued logic generated by this matrix. A simple predicate language is used, with no quantifiers or connectives, to express membership in rough sets. Connections, in special cases, with 3-valued Kleene, {\L}ukasiewicz and two paraconsistent logics are established. 

Now, let us provide rough set representations of Stone and dual Stone algebras and rough set semantics for $\mathcal{L}_{S}$ and $\mathcal{L}_{}DS$. Let  $\mathcal{RS}_{\sim}$ and $\mathcal{RS}_{\neg}$ denote respectively the Stone algebra  and dual Stone algebra formed by $\mathcal{RS}$ for an approximation space $(U,R)$.
\begin{theorem}\label{thm10}
\begin{enumerate}
\item \begin{enumerate}
\item Given an  Stone algebra $\mathcal{S} = (S,\vee,\wedge,\sim,0,1)$, then there exists an approximation space  $(U,R)$ such that $\mathcal{S}$ can be embedded into  Stone algebra  $\mathcal{RS}_{\sim}$.
\item Let $\mathcal{A_{SRS}}$ denote the class of all Stone algebras formed by $\mathcal{RS}$. Then we have for $\alpha ,\beta \in \mathcal{F}_{\sim}$:\\
$\alpha \vdash_{\mathcal{L}_{S}}$ if and only if $\alpha \vDash_{\mathcal{A_{SRS}}} \beta$. 
\end{enumerate}
\item 
\begin{enumerate} \item Given a dual Stone algebra $\mathcal{DS} = (DS,\vee,\wedge,\neg,0,1)$, then there exists an approximation space  $(U,R)$ such that $\mathcal{DS}$ can be embedded into dual Stone algebra $\mathcal{RS}_{\neg}$.
\item Let $\mathcal{A_{DSRS}}$ denote the class of all dual Stone algebras formed by $\mathcal{RS}$. Then we have for $\alpha ,\beta \in \mathcal{F}_{\neg}$:\\
$\alpha \vdash_{\mathcal{L}_{DS}}$ if and only if $\alpha \vDash_{\mathcal{A_{DSRS}}} \beta$. 
\end{enumerate}
\end{enumerate}
\end{theorem}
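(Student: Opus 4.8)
The plan is to derive both the representation and the completeness statements from the Boolean representation already in hand (Theorem~\ref{thm4}) by exhibiting a single, uniformly chosen approximation space whose rough sets reproduce $(\textbf{2}^{I})^{[2]}$ as a Stone algebra. I treat part~(1) explicitly; part~(2) is dual.

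For part~(1a), Theorem~\ref{thm4} gives an index set $I$ and an embedding of $\mathcal{S}$ into the Stone algebra $(\textbf{2}^{I})^{[2]}$, so it suffices to realise $(\textbf{2}^{I})^{[2]}_{\sim}$ itself as $\mathcal{RS}_{\sim}$ for a suitable $(U,R)$. I take $U := I \times \{1,2\}$ and let $R$ be the equivalence whose classes are the two-element sets $C_{i} := \{(i,1),(i,2)\}$, $i \in I$. The definable sets $\mathcal{D}$ are then exactly the unions of the $C_{i}$, so $S \mapsto \bigcup_{i \in S} C_{i}$ is a Boolean isomorphism $\textbf{2}^{I} \cong \mathcal{D}$ carrying complementation to complementation.

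The crucial point, which I expect to carry the real content, is that for this $(U,R)$ the rough sets $\mathcal{RS}$ coincide with the full collection $\mathcal{D}^{[2]} = \{(D_{1},D_{2}): D_{1}\subseteq D_{2},\ D_{1},D_{2}\in\mathcal{D}\}$. One inclusion is automatic, since $({\sf L}A,{\sf U}A)$ is always such a pair; for the converse, given definable $D_{1}\subseteq D_{2}$ I build $A$ by taking $D_{1}$ together with a single point from each class contained in $D_{2}\setminus D_{1}$, which is possible precisely because every class has two elements and yields ${\sf L}A = D_{1}$, ${\sf U}A = D_{2}$. Hence the Boolean isomorphism above lifts to a lattice isomorphism $(\textbf{2}^{I})^{[2]} \cong \mathcal{RS}$. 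It remains to check that it commutes with the Stone operation: the pseudocomplement of $({\sf L}A,{\sf U}A)$ in $\mathcal{RS}_{\sim}$ is the largest rough set meeting it in $(\emptyset,\emptyset)$, namely $(({\sf U}A)^{c},({\sf U}A)^{c})$, matching $\sim(a,b)=(b^{c},b^{c})$ under the isomorphism. Composing the embedding from Theorem~\ref{thm4} with this isomorphism gives the required embedding of $\mathcal{S}$ into $\mathcal{RS}_{\sim}$.

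For part~(1b), soundness is immediate: every $\mathcal{RS}_{\sim}$ is a Stone algebra, so $\mathcal{A_{SRS}} \subseteq \mathcal{A}_{S}$ and $\alpha \vDash_{\mathcal{A}_{S}} \beta$ gives $\alpha \vDash_{\mathcal{A_{SRS}}} \beta$; by Theorem~\ref{thm6} this is implied by $\alpha \vdash_{\mathcal{L}_{S}} \beta$. For completeness suppose $\alpha \vDash_{\mathcal{A_{SRS}}} \beta$. By Theorem~\ref{thm6} it is enough to establish $\alpha \vDash_{\mathcal{A}_{S}} \beta$, so fix any Stone algebra $\mathcal{S}$ and valuation $v$ on it. Part~(1a) provides an embedding $\phi$ of $\mathcal{S}$ into some $\mathcal{RS}_{\sim}\in\mathcal{A_{SRS}}$; then $\phi\circ v$ is a valuation on $\mathcal{RS}_{\sim}$, so the hypothesis yields $\phi(v(\alpha)) \leq \phi(v(\beta))$, and since $\phi$ reflects the order we get $v(\alpha)\leq v(\beta)$. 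Thus $\alpha\vDash_{\mathcal{A}_{S}}\beta$, and Theorem~\ref{thm6} closes the loop. Part~(2) is proved verbatim with $\neg$ in place of $\sim$, the only change being that the dual pseudocomplement of $({\sf L}A,{\sf U}A)$ is $(({\sf L}A)^{c},({\sf L}A)^{c})$, matching $\neg(a,b)=(a^{c},a^{c})$.
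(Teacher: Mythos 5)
Your proof is correct and follows essentially the same route as the paper: embed the Stone (dual Stone) algebra into $(\textbf{2}^{I})^{[2]}$ via Theorem~\ref{thm4}, realise that structure as $\mathcal{RS}$ over an approximation space whose classes are doubletons, check the (dual) pseudocomplements agree, and reduce part (b) to Theorems~\ref{thm6} and~\ref{thm7}. The only difference is that you write out explicitly the doubling construction $U = I \times \{1,2\}$ and the verification that $\mathcal{RS}$ exhausts $\mathcal{D}^{[2]}$, which the paper simply cites from \cite{KB17}.
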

\begin{proof}
In \cite{KB17}, we proved that given a set $U^{\prime}$, there exists an approximation space $(U,R)$ such that the partially ordered set $(\mathcal{P}(U^{\prime})^{[2]},\leq)$ is order isomorphic to collection of rough sets $\mathcal{RS}$ corresponding to  $(U,R)$. This order isomorphism can easily be extended to an Stone or dual Stone isomorphism. This proves $1$(a) and $2$(a). $1$(b) and $2$(b) follows from $1$(a), $2$(a),
 Theorem \ref{thm6} and Theorem \ref{thm7}.  
\end{proof}\qed
\noindent Hence, in particular, Stone algebra $\textbf{3}_{\sim}$ and dual Stone algebra $\textbf{3}_{\neg}$ are isomorphic to $\mathcal{RS}$ for some approximation space. Figure \ref{fig3} and 
Figure \ref{fig4} depict the isomorphisms, where  $U = \{x\}$ and $\mathcal{RS}$ is collection of rough sets corresponding to approximation $(U^{\prime} = \{x,x^{\prime}\},R^{\prime} = \{(x,x),(x,x^{\prime}),(x^{\prime},x),(x^{\prime},x^{\prime})\})$.
\begin{figure}[h] 
\begin{tikzpicture}[scale=.65]
    
    \draw [dotted] (0,1) -- (0,1);
			\node [right] at (0,2) {$\textbf{3}_{\sim}$ := };
		\draw (2,0) -- (2,2);
		\node [below] at (2,0) {0 = $\sim$ 1 = $\sim$ a};
    \draw [fill] (2,0) circle [radius = 0.1];
		\node [right] at (2,2) {a };
    \draw [fill] (2,2) circle [radius = 0.1];
		\draw (2,2) -- (2,4);
		\draw [fill] (2,4) circle [radius = 0.1];
		\node [above] at (2,4) {1 = $\sim$ 0};
		\node [left] at (4,2) {$\cong$};
		\node [right] at (4.5,2) {$\mathcal{P}(U)^{2}_{\sim}: = $};
		\draw (7.5,0) -- (7.5,2);
		\node [below] at (7.5,0) {$(\emptyset,\emptyset) = \sim(\emptyset,U) = \sim(U,U)$};
    \draw [fill] (7.5,0) circle [radius = 0.1];
		\node [right] at (7.5,2) {$(\emptyset,U)$};
    \draw [fill] (7.5,2) circle [radius = 0.1];
		\draw (7.5,2) -- (7.5,4);
		\draw [fill] (7.5,4) circle [radius = 0.1];
		\node [above] at (7.5,4) {$(U,U) = \sim(\emptyset,\emptyset)$};
		\node [left] at (10,2) {$\cong$};
		\node [right] at (10.5,2) {$\mathcal{RS}_{\sim}: = $};
		\draw (13,0) -- (13,2);
		\node [below] at (13.5,0) {$({\sf  L}\emptyset,{\sf U}\emptyset) = \sim ({\sf L}U,{\sf L}U) $};
		\node [below] at (14,-0.5) {$ = \sim({\sf L }x,{\sf L}x) = \sim({\sf L }x^{\prime},{\sf L}x^{\prime})$};
    \draw [fill] (13,0) circle [radius = 0.1];
		\node [right] at (13,2) {$({\sf L }x,{\sf L}x) = ({\sf L }x^{\prime},{\sf L}x^{\prime})$};
    \draw [fill] (13,2) circle [radius = 0.1];
		\draw (13,2) -- (13,4);
		\draw [fill] (13,4) circle [radius = 0.1];
		\node [above] at (13,4) {$({\sf L}U,{\sf L}U) = \sim ({\sf  L}\emptyset,{\sf U}\emptyset)$};

\end{tikzpicture}
\caption{ $\textbf{3}_{\sim } \cong \mathcal{P}(U)^{2}_{\sim} \cong \mathcal{RS}_{\sim}$ }

\label{fig3}
\end{figure}
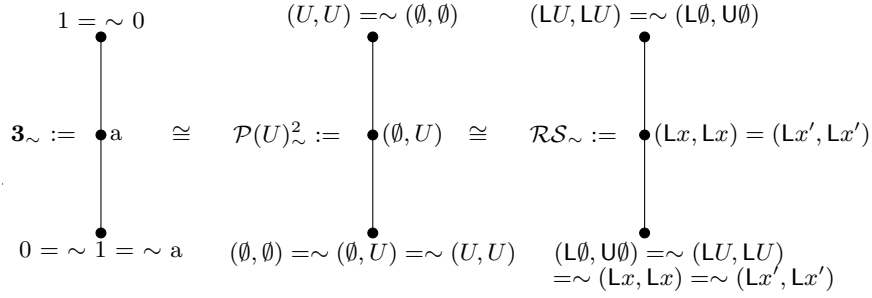 

\begin{figure}[ht] 
\begin{tikzpicture}[scale=.65]
    
    \draw [dotted] (0,1) -- (0,1);
			\node [right] at (0,2) {$\textbf{3}_{\neg}$ := };
		\draw (2,0) -- (2,2);
		\node [below] at (2,0) {0 = $\sim$ 1 = $\neg$ u};
    \draw [fill] (2,0) circle [radius = 0.1];
		\node [right] at (2,2) {u };
    \draw [fill] (2,2) circle [radius = 0.1];
		\draw (2,2) -- (2,4);
		\draw [fill] (2,4) circle [radius = 0.1];
		\node [above] at (2,4) {1 = $\neg$ 0};
		\node [left] at (4,2) {$\cong$};
		\node [right] at (4.5,2) {$\mathcal{P}(U)^{2}_{\neg}: = $};
		\draw (7.5,0) -- (7.5,2);
		\node [below] at (7.5,0) {$(\emptyset,\emptyset) = \neg(\emptyset,U) = \neg(U,U)$};
    \draw [fill] (7.5,0) circle [radius = 0.1];
		\node [right] at (7.5,2) {$(\emptyset,U)$};
    \draw [fill] (7.5,2) circle [radius = 0.1];
		\draw (7.5,2) -- (7.5,4);
		\draw [fill] (7.5,4) circle [radius = 0.1];
		\node [above] at (7.5,4) {$(U,U) = \neg(\emptyset,\emptyset)$};
		\node [left] at (10,2) {$\cong$};
		\node [right] at (10.5,2) {$\mathcal{RS}_{\neg}: = $};
		\draw (13,0) -- (13,2);
		\node [below] at (13.5,0) {$({\sf  L}\emptyset,{\sf U}\emptyset) = \neg ({\sf L}U,{\sf L}U) $};
    \draw [fill] (13,0) circle [radius = 0.1];
		\node [right] at (13,2) {$({\sf L }x,{\sf L}x) = ({\sf L }x^{\prime},{\sf L}x^{\prime})$};
    \draw [fill] (13,2) circle [radius = 0.1];
		\draw (13,2) -- (13,4);
		\draw [fill] (13,4) circle [radius = 0.1];
		\node [above] at (13,4) {$({\sf L}U,{\sf L}U) = \neg ({\sf  L}\emptyset,{\sf U}\emptyset)$};
		\node [above] at (13,4.5) {$ = \neg({\sf L }x,{\sf L}x) = \neg({\sf L }x^{\prime},{\sf L}x^{\prime})$};

\end{tikzpicture}
\caption{ $\textbf{3}_{\neg } \cong \mathcal{P}(U)^{2}_{\neg} \cong \mathcal{RS}_{\neg}$ }

\label{fig4}
\end{figure}
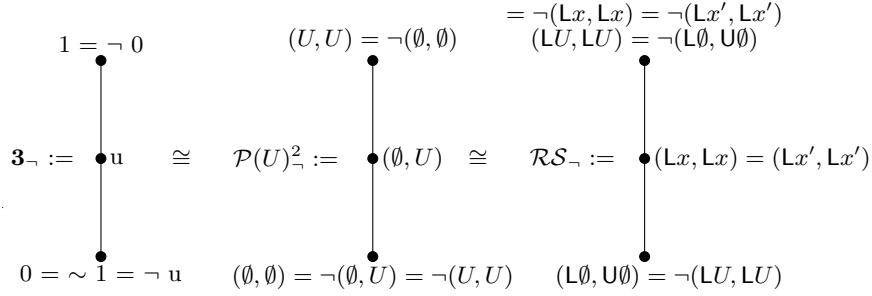 

\noindent Now, we would  explicate the relationship between rough sets and the 3-valued semantics of the logic  $\mathcal{L}_{S}$ and $\mathcal{L}_{DS}$  indicated in  Theorem \ref{thm10} and Theorem \ref{thm9}.
\noindent 
 
\noindent Let $\alpha$ be a formula in $\mathcal{L}_{S}$ and $\gamma$ be a formula in $\mathcal{L}_{DS}$. Let    $v$ be a valuation in $\mathcal{RS}_{\sim}$ for some approximation space $(U,R)$  such that $v(\alpha) := ({\sf L}A,{\sf U}A), A \subseteq U$, and $v^{\prime}$ be a valuation in  $\mathcal{RS}^{\prime}_{\neg}$ for some approximation space $(U^{\prime},R^{\prime})$ such that
  $v(\gamma) := ({\sf L}A^{\prime},{\sf U}A^{\prime}), A^{\prime} \subseteq U^{\prime}$. Let $x \in U$ and $y \in U^{\prime}$. 
We define the following semantic consequence relations. 
\begin{enumerate}
\item 
 $v,x\vDash^{\mathcal{RS}_{\sim}}_{1} \alpha$  if and only if $x \in {\sf L}A$.\\
 $v,x\vDash^{\mathcal{RS}_{\sim}}_{0} \alpha$  if and only if $x \notin {\sf U}A$.\\
 $v,x\vDash^{\mathcal{RS}_{\sim}}_{u} \alpha$  if and only if $x \notin {\sf L}A, x \in {\sf U}A$.
\item 
 $v^{\prime},y\vDash^{\mathcal{RS}_{\neg}}_{1} \gamma$  if and only if $y \in {\sf L}A^{\prime}$.\\
 $v^{\prime},y\vDash^{\mathcal{RS}_{\neg}}_{0} \gamma$  if and only if $y \notin {\sf U}A^{\prime}$.\\
 $v^{\prime},y\vDash^{\mathcal{RS}_{\neg}}_{u} \gamma$  if and only if $y \notin {\sf L}A, y \in {\sf U}A^{\prime}$.
\end{enumerate}
\noindent Then $v,x\vDash^{\mathcal{RS}_{\sim}}_{1} \alpha$,  $v^{\prime},y\vDash^{\mathcal{RS}_{\neg}}_{1} \alpha$ captures the interpretation \ref{t}, $v,x\vDash^{\mathcal{RS}_{\sim}}_{0} \alpha$, $v^{\prime},y\vDash^{\mathcal{RS}_{\neg}}_{0} \alpha$ captures the interpretation \ref{f} and $v,x\vDash^{\mathcal{RS}_{\sim}}_{u} \alpha$, $v^{\prime},y\vDash^{\mathcal{RS}_{\neg}}_{u} \alpha$ captures the interpretation \ref{p}. \\
Next, let us   define the following relations. Let $\alpha,\beta \in \mathcal{F}_{\sim}$ and $\gamma,\delta \in \mathcal{F}_{\neg}$, 
\begin{enumerate}
 \item $\alpha\vDash^{\mathcal{RS}_{\sim}}_{1}\beta$ if and only if $v,x\vDash^{\mathcal{RS}_{\sim}}_{1}\alpha$  implies $v,x\vDash^{\mathcal{RS}_{\sim}}_{1}\beta$, for all valuations $v$ in $\mathcal{RS}_{\sim}$ and $x \in U$.\\
$\alpha\vDash^{\mathcal{RS}_{\sim}}_{0}\beta$  if and only if $v,x\vDash^{\mathcal{RS}_{\sim}}_{0}\beta$ implies $v,x\vDash^{\mathcal{RS}_{\sim}}_{0}\alpha$, for all valuations $v$ in $\mathcal{RS}_{\sim}$ and $x \in U$.\\
 $\alpha\vDash^{\mathcal{RS}_{\sim}}_{1,0}\beta$  if and only if $\alpha\vDash^{\mathcal{RS}_{\sim}}_{1}\beta$ and $\alpha\vDash^{\mathcal{RS}_{\sim}}_{0}\beta$.
\item $\gamma\vDash^{\mathcal{RS}_{\neg}}_{1}\delta$ if and only if $v^{\prime},y\vDash^{\mathcal{RS}_{\neg}}_{1}\gamma$  implies $v^{\prime},y\vDash^{\mathcal{RS}_{\neg}}_{1}\delta$, for all valuations $v^{\prime}$ in $\mathcal{RS}_{\neg}$ and $y \in U^{\prime}$.\\
$\gamma\vDash^{\mathcal{RS}_{\neg}}_{0}\delta$  if and only if $v^{\prime},y\vDash^{\mathcal{RS}_{\neg}}_{0}\delta$ implies $v^{\prime},y\vDash^{\mathcal{RS}_{\neg}}_{0}\gamma$, for all valuations $v^{\prime}$ in $\mathcal{RS}_{\neg}$ and $y \in U^{\prime}$.\\
 $\gamma\vDash^{\mathcal{RS}_{\neg}}_{1,0}\delta$  if and only if $\gamma\vDash^{\mathcal{RS}_{\neg}}_{1}\delta$ and $\gamma\vDash^{\mathcal{RS}_{\neg}}_{0}\delta$.
\end{enumerate}

Now we link the syntax and semantics.
\begin{definition} \begin{enumerate}
\item Let $\alpha, \beta \in \mathcal{F}_{\sim}$ and $\alpha \vdash \beta$ be a consequent. 
\begin{itemize}
\item $\alpha \vdash \beta$ is valid in an approximation space  $(U,R)$, 
if and only if $\alpha\vDash^{\mathcal{RS}_{\sim}}_{1}\beta$.
\item $\alpha \vdash \beta$ is valid in a class $\mathcal{F}$  of approximation spaces if and only if $\alpha \vdash \beta$ is valid in all approximation spaces  $(U,R) \in \mathcal{F}$.
\end{itemize}

\item Let $\alpha, \beta \in \mathcal{F}_{\neg}$ and $\alpha \vdash \beta$ be a consequent. 
\begin{itemize}
\item $\alpha \vdash \beta$ is valid in an approximation space  $(U,R)$, 
if and only if $\alpha\vDash^{\mathcal{RS}_{\neg}}_{0}\beta$.
\item $\alpha \vdash \beta$ is valid in a class $\mathcal{F}$  of approximation spaces if and only if $\alpha \vdash \beta$ is valid in all approximation spaces  $(U,R) \in \mathcal{F}$.
\end{itemize}
\end{enumerate}
\end{definition}
\begin{theorem} Let Let $\alpha, \beta \in \mathcal{F}_{\sim}$ and  $\gamma, \delta \in \mathcal{F}_{\neg}$, then
\begin{enumerate}
\item $\alpha \vDash_{\mathcal{A}_{S\mathcal{RS}}} \beta$ if and only if $\alpha \vdash \beta$ is valid in the class of all approximation spaces.
\item $\gamma \vDash_{\mathcal{A}_{DS\mathcal{RS}}} \delta$ if and only if $\gamma \vdash \delta$ is valid in the class of all approximation spaces.
\end{enumerate}
\end{theorem}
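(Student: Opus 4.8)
The plan is to reduce the final theorem to results already in hand, principally Theorem~\ref{thm10} and the preceding unlabeled theorem that identifies $\alpha \vDash_{\mathcal{SP}(U))^{[2]}} \beta$ with $\alpha \vDash^{S}_{1} \beta$ (and the dual statement for $\neg$). The key observation is that the newly introduced relation $\alpha\vDash^{\mathcal{RS}_{\sim}}_{1}\beta$ is, by the definitions immediately preceding, nothing more than truth preservation read off pointwise on rough sets: $v,x\vDash^{\mathcal{RS}_{\sim}}_{1}\alpha$ holds iff $x \in {\sf L}A$ where $v(\alpha)=({\sf L}A,{\sf U}A)$. So the first step is to unwind the definition of ``valid in the class of all approximation spaces'' in part~(1): it says exactly that for every approximation space $(U,R)$, every valuation $v$ in $\mathcal{RS}_{\sim}$, and every $x \in U$, membership $x\in {\sf L}(v(\alpha))$ forces $x\in {\sf L}(v(\beta))$.

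First I would establish the forward direction. Assume $\alpha \vDash_{\mathcal{A}_{S\mathcal{RS}}} \beta$, i.e. $v(\alpha) \leq v(\beta)$ for every valuation $v$ into any Stone algebra $\mathcal{RS}_{\sim}$. Writing $v(\alpha)=({\sf L}A,{\sf U}A)$ and $v(\beta)=({\sf L}B,{\sf U}B)$, the order on $\mathcal{RS}$ is componentwise inclusion, so $v(\alpha)\leq v(\beta)$ gives ${\sf L}A \subseteq {\sf L}B$. Hence whenever $x\in{\sf L}A$ we get $x\in{\sf L}B$, which is precisely $\alpha\vDash^{\mathcal{RS}_{\sim}}_{1}\beta$ for that space; since the space was arbitrary, $\alpha\vdash\beta$ is valid in the class of all approximation spaces. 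The converse direction is the more delicate one: from pointwise truth preservation I must recover the full inequality $v(\alpha)\leq v(\beta)$, i.e. both ${\sf L}A\subseteq{\sf L}B$ \emph{and} ${\sf U}A\subseteq{\sf U}B$. The lower-approximation inclusion is immediate from the hypothesis as above. For the upper-approximation inclusion I would invoke Lemma~\ref{lemma2}: pointwise truth preservation $\vDash^{S}_{1}$ implies falsity preservation $\vDash^{S}_{0}$, which in rough-set terms reads $x\notin{\sf U}B \Rightarrow x\notin{\sf U}A$, equivalently ${\sf U}A\subseteq{\sf U}B$. Combining the two inclusions yields $v(\alpha)\leq v(\beta)$, and since $v$ and the space are arbitrary, $\alpha\vDash_{\mathcal{A}_{S\mathcal{RS}}}\beta$.

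The dual statement~(2) follows by the symmetric argument with the roles of $1$ and $0$ interchanged: validity in the class of all approximation spaces now unwinds through $\vDash^{\mathcal{RS}_{\neg}}_{0}$ (falsity preservation), which gives ${\sf U}A'\subseteq{\sf U}B'$ directly, and the second part of Lemma~\ref{lemma2} ($\vDash^{DS}_{0}\Rightarrow\vDash^{DS}_{1}$) supplies the complementary inclusion ${\sf L}A'\subseteq{\sf L}B'$, again assembling to the componentwise order $v'(\gamma)\leq v'(\delta)$. I expect the main obstacle to be purely bookkeeping rather than conceptual: one must be careful that the ``pointwise'' semantic relations $\vDash^{\mathcal{RS}_{\sim}}_{1}$ on formulas quantify over \emph{all} valuations and \emph{all} points $x$, so that transferring between the pointwise two-valued picture and the global inequality on rough sets is sound in both directions. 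The crucial tie-in is that on $\mathcal{RS}_{\sim}\cong\textbf{3}_{\sim}^{?}$ each point $x$ induces a two-valued/three-valued valuation $v_x$ exactly as constructed in the proof of the earlier theorem, so no genuinely new verification of the homomorphism conditions is needed here; it is this identification, together with Lemma~\ref{lemma2} converting truth preservation into the missing component inclusion, that does all the work.
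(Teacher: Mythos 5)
Your proposal is correct and follows essentially the same route as the paper's proof: the forward direction reads off ${\sf L}A \subseteq {\sf L}B$ from the componentwise order, and the converse gets the lower-approximation inclusion directly from pointwise truth preservation and the upper-approximation inclusion by converting truth preservation into falsity preservation via Lemma~\ref{lemma2}, with part~(2) handled dually. Your remark that the pointwise relations must be tied back to valuations $v_x$ in $\textbf{3}_{\sim}$ before Lemma~\ref{lemma2} applies is a detail the paper glosses over, but it is the same argument.
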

\begin{proof}
\begin{enumerate}
\item Let $\alpha \vDash_{\mathcal{A}_{S\mathcal{RS}}} \beta$. Let $(U,R)$ be an approximation space, and $v$ be a valuation in $\mathcal{RS}_{\sim}$
with  $v(\alpha) := ({\sf L}A,{\sf U}A)$ and $v(\beta): = ({\sf L}B,{\sf U}B),~A,B \subseteq U$. 

\noindent 
By the assumption, ${\sf L}A \subseteq {\sf L}B$ and 
${\sf U}A \subseteq {\sf U}B$. Now, let us  show that $\alpha\vDash^{\mathcal{RS}_{\sim}}_{1}\beta$. So, let $x \in U$ and $v,x\vDash^{\mathcal{RS}_{\sim}}_{1} \alpha$, i.e., $x \in {\sf L}A$. But we have ${\sf L}A \subseteq {\sf L}B$, hence $v,x\vDash^{\mathcal{RS}_{\sim}}_{1} \beta$. 

\noindent Now, suppose $\alpha \vdash \beta$ is valid in the class of all approximation spaces. We want to show that $\alpha \vDash_{\mathcal{A}_{S\mathcal{RS}}} \beta$. Let $v$ be a valuation in $\mathcal{RS}_{\sim}$ as taken above. We have to show that ${\sf L}A \subseteq {\sf L}B$ and ${\sf U}A \subseteq {\sf U}B$. Let $x \in {\sf L}A$, i.e., $v,x\vDash^{\mathcal{RS}_{\sim}}_{1} \alpha$. Hence by our assumption, 
$v,x\vDash^{\mathcal{RS}}_{1} \beta$, i.e., $x \in {\sf L}B$. So  ${\sf L}A \subseteq {\sf L}B$. Now, let $y \notin {\sf U}B$, using Lemma \ref{lemma2}, we have 
$v,y \vDash^{\mathcal{RS}_{\sim}}_{0} \beta$. By our assumption, $v,y \vDash^{\mathcal{RS}_{\sim}}_{0} \alpha$, i.e., $y \notin {\sf U}A$.
\item The proof of this part is very similar to that of part 1 which uses lemma \ref{lemma2}.
\end{enumerate}
\end{proof}\qed



\section{Boolean representation of double Stone algebra and corresponding logic}\label{sec3}
In this section, we follow the same idea as in above section. We provide an structural representation of double Stone algebra in which negations present in double Stone algebra are described using Boolean complement.
\subsection{Boolean representation of double Stone algebra}
Moisil in 1941 (cf. \cite{Boicescu91}) generalizes the construction of $B^{[2]}$. He defined for any natural number $n$, the set 

\noindent \hspace*{2 cm} $B^{[n]} = \{(b_{1},b_{2},....,b_{n}) \in B^{n}: b_{1} \leq b_{2}\leq .... \leq b_{n}\}$,

\noindent as examples of n-valued {\L}ukasiewicz Moisil algebras. Our interest in this section is to consider $B^{[3]}$  as double Stone algebra. 
\begin{proposition}   \cite{Boicescu91}
$(B^{[3]}, \vee, \wedge, \sim, \neg, (0,0,0), (1,1,1))$ is a double Stone algebra, where, for $(a,b,c), (d,e,f) \in B^{[3]}$, \vskip 2pt 
\noindent 
 $(a,b,c) \vee (d,e,f) := (a\vee d,b \vee e, c \vee f )$,
 $(a,b,c) \wedge (d,e,f) := (a\wedge d,b \wedge e,c \wedge f )$,\\ 
\hspace*{2 cm} $\sim(a,b,c) := (c^{c},c^{c},c^{c})$, $\neg(a,b,c) := (a^{c},a^{c},a^{c})$, 
\end{proposition}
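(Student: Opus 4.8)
The plan is to verify the three defining conditions of a double Stone algebra directly, reusing the componentwise structure inherited from the Boolean algebra $B$ and imitating the pseudocomplement computation already carried out for $B^{[2]}$ in the preceding Proposition.

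First I would check that $(B^{[3]}, \vee, \wedge, (0,0,0), (1,1,1))$ is a bounded distributive lattice. Since $B^{3}$ is a distributive lattice under componentwise operations and $B^{[3]} \subseteq B^{3}$, it suffices to confirm that $B^{[3]}$ is closed under the componentwise $\vee$ and $\wedge$: if $a \leq b \leq c$ and $d \leq e \leq f$, then monotonicity of $\vee$ and $\wedge$ gives $a \vee d \leq b \vee e \leq c \vee f$ and $a \wedge d \leq b \wedge e \leq c \wedge f$, so both results lie in $B^{[3]}$. The tuples $(0,0,0)$ and $(1,1,1)$ are clearly the least and greatest elements, and distributivity descends from $B$.

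Next I would show that $\sim$ is the pseudocomplement and that the Stone identity holds, so that condition 2 is met. Given $(a,b,c) \in B^{[3]}$, the equation $(a,b,c) \wedge (x,y,z) = (0,0,0)$ is equivalent to $x \leq a^{c}$, $y \leq b^{c}$, $z \leq c^{c}$. Because $a \leq b \leq c$ forces $c^{c} \leq b^{c} \leq a^{c}$, and because any admissible triple satisfies $x \leq y \leq z \leq c^{c}$, the largest such triple is $(c^{c}, c^{c}, c^{c})$; one checks it is indeed annihilating since $a, b \leq c$. Hence $\sim(a,b,c) = (c^{c}, c^{c}, c^{c})$ is the pseudocomplement, exactly mirroring the $B^{[2]}$ argument. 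The Stone identity then follows from the short computation $\sim\sim(a,b,c) = (c,c,c)$, whence $\sim(a,b,c) \vee \sim\sim(a,b,c) = (c^{c} \vee c, c^{c} \vee c, c^{c} \vee c) = (1,1,1)$.

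Condition 3 is the order dual: I would verify that $\neg(a,b,c) = (a^{c}, a^{c}, a^{c})$ realizes $\min\{(x,y,z) \in B^{[3]} : (a,b,c) \vee (x,y,z) = (1,1,1)\}$, using that $(a,b,c) \vee (x,y,z) = (1,1,1)$ is equivalent to $x \geq a^{c}$, $y \geq b^{c}$, $z \geq c^{c}$, and that $a \leq b \leq c$ now makes the constraint $x \geq a^{c}$ the binding one; the dual Stone identity then reduces to $\neg\neg(a,b,c) = (a,a,a)$ and $\neg(a,b,c) \wedge \neg\neg(a,b,c) = (0,0,0)$. The only place calling for care --- and the single genuine obstacle --- is the passage from the componentwise annihilation (respectively complementation) conditions to the extremal \emph{monotone} triple: one must use the monotonicity constraint $x \leq y \leq z$ together with the induced reversals $c^{c} \leq b^{c} \leq a^{c}$ in $B$ to see that the flat triple $(c^{c},c^{c},c^{c})$ (respectively $(a^{c},a^{c},a^{c})$) is the extremal element \emph{of $B^{[3]}$} rather than merely of $B^{3}$. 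Everything else is a routine descent of identities from the Boolean algebra $B$.
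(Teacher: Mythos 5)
Your proposal is correct and follows essentially the same route the paper takes for the two-component analogue $\mathcal{B}^{[2]}_{\sim}$: translate the componentwise annihilation (resp.\ join-to-top) condition into $x\leq a^{c},\,y\leq b^{c},\,z\leq c^{c}$ (resp.\ the reversed inequalities) and use the monotonicity constraint together with $c^{c}\leq b^{c}\leq a^{c}$ to identify the flat triple as the extremal element of $B^{[3]}$. You are in fact slightly more complete than the paper, which for $B^{[2]}$ only verifies the pseudocomplement formula and leaves the lattice closure and the Stone identity $\sim u\vee\sim\sim u=1$ unchecked, whereas you carry out both explicitly.
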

%
%
%
It is well known that $\textbf{1}$, $\textbf{2}$, $\textbf{3}$ and $\textbf{4}$ are the only sudirect irreducible (Figure \ref{fig5}) double Stone algebras. 
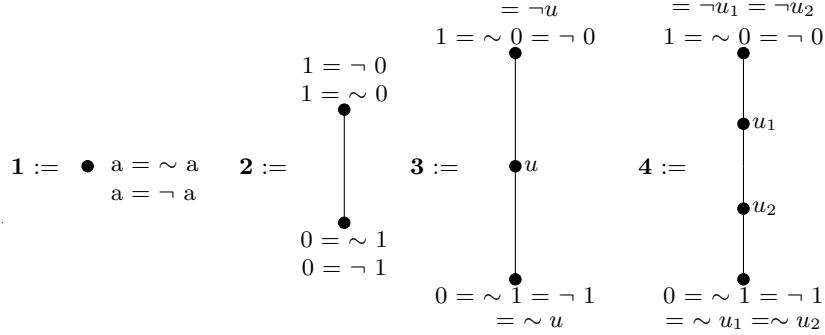
\begin{figure}[ht] 
\begin{tikzpicture}[scale=.75]
    
    \draw [dotted] (0,1) -- (0,1);

		\draw [fill] (1.5,2) circle [radius = 0.1];
		\node [right] at (1.75,2) {a = $\sim$ a};
		\node [right] at (1.75,1.5) {a = $\neg$ a};
		\node [right] at (0,2) {$\textbf{1}$ := };

		\node [right] at (4,2) {$\textbf{2}$ := };
		\draw (6,1) -- (6,3);
		\draw [fill] (6,1) circle [radius = 0.1];
		\node [below] at (6,1) {0 = $\sim$ 1};
		\node [below] at (6,.5) {0 = $\neg$ 1};
		\draw [fill] (6,3) circle [radius = 0.1];
		\node [above] at (6,3) {1 = $\sim$ 0};
		\node [above] at (6,3.5) {1 = $\neg$ 0};
		
		\node [right] at (7,2) {$\textbf{3}$ := };
		\draw (9,0) -- (9,2);
		\node [below] at (9,0) {0 = $\sim$ 1 = $\neg$  1 };
		\node [below] at (9.25,-0.5) { = $\sim u$  };
    \draw [fill] (9,0) circle [radius = 0.1];
		\node [right] at (9,2) {$u$};
    \draw [fill] (9,2) circle [radius = 0.1];
		\draw (9,2) -- (9,4);
		\draw [fill] (9,4) circle [radius = 0.1];
		\node [above] at (9,4) {1 = $\sim$ 0 = $\neg$ 0};
		\node [above] at (9.25,4.5) { = $\neg u$};

\node [right] at (11,2) {$\textbf{4}$ := };
		\draw (13,0) -- (13,2);
		\node [below] at (13,0) {0 = $\sim$ 1 = $\neg$  1  };
		\node [below] at (13,-0.5) { = $\sim u_{1}  = \sim u_{2}$ };
    \draw [fill] (13,0) circle [radius = 0.1];
		\node [right] at (13,1.25) {$u_{2}$};
    \draw [fill] (13,1.25) circle [radius = 0.1];
		\node [right] at (13,2.75) {$u_{1} $};
		\draw [fill] (13,2.75) circle [radius = 0.1];
		\draw (13,2) -- (13,4);
		\draw [fill] (13,4) circle [radius = 0.1];
		\node [above] at (13,4) {1 = $\sim$ 0 = $\neg$ 0};
		\node [above] at (13,4.5) {= $\neg u_{1}  = \neg u_{2}$};

\end{tikzpicture}
\caption{Subdirectly irreducible double Stone algebras}

\label{fig5}
\end{figure}

Hence, again using Birkhoff's representation theorem we have the following.
\begin{theorem}\label{thm12A}  \cite{Balbes74}

Let $\mathcal{DS} = (DS,\vee,\wedge,\sim,\neg,0,1)$ be a double Stone algebra. There exists a (index) set $I$ such that $\mathcal{DS}$ can be 
embedded into {\rm$\textbf{4}^{I}$}.
\end{theorem}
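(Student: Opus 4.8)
The plan is to invoke Birkhoff's subdirect representation theorem together with the stated classification of subdirectly irreducible double Stone algebras. Recall that the variety of double Stone algebras is equational (it is a bounded distributive lattice with two extra unary operations $\sim$ and $\neg$ satisfying identities), so Birkhoff's theorem applies: every double Stone algebra $\mathcal{DS}$ is isomorphic to a subdirect product of subdirectly irreducible double Stone algebras. The excerpt has already asserted (Figure \ref{fig5}) that the only subdirectly irreducible double Stone algebras are $\textbf{1}$, $\textbf{2}$, $\textbf{3}$ and $\textbf{4}$, so any subdirect product of subdirectly irreducibles is a subalgebra of a product of copies drawn from $\{\textbf{1},\textbf{2},\textbf{3},\textbf{4}\}$.

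First I would set up the subdirect decomposition: write $\mathcal{DS}$ as a subdirect product $\prod_{j\in J}\mathcal{S}_j$ where each $\mathcal{S}_j\in\{\textbf{1},\textbf{2},\textbf{3},\textbf{4}\}$, obtained via the family of congruences whose quotients are subdirectly irreducible. Second, I would absorb the smaller factors into copies of $\textbf{4}$. This is the key observation that makes the clean statement work: $\textbf{4}$ is the largest of the four subdirectly irreducibles, and each of $\textbf{1}$, $\textbf{2}$, $\textbf{3}$ embeds into $\textbf{4}$ as a double Stone subalgebra. Concretely, $\textbf{1}\cong\{a\}$, $\textbf{2}\cong\{f,t\}$ (the two-element chain sitting at bottom and top of $\textbf{4}$, mapping to the fixed points under the negations appropriately), and $\textbf{3}$ embeds as a three-element chain; each such embedding must be checked to respect $\vee,\wedge,\sim,\neg,0,1$. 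Replacing each factor $\mathcal{S}_j$ by $\textbf{4}$ via these embeddings yields an embedding $\prod_{j\in J}\mathcal{S}_j\hookrightarrow\textbf{4}^{J}$, and composing with the subdirect embedding gives an embedding $\mathcal{DS}\hookrightarrow\textbf{4}^{I}$ with $I=J$.

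The main obstacle I expect is verifying that the four-element lattice $\textbf{4}$ genuinely carries a double Stone structure compatible with these embeddings, and that the embeddings of $\textbf{1},\textbf{2},\textbf{3}$ into $\textbf{4}$ are homomorphisms for both $\sim$ and $\neg$ simultaneously. One must be slightly careful: the relevant $\textbf{4}$ here is the double Stone algebra on a four-element chain (the double Stone $\textbf{4}$ of Figure \ref{fig5}), not the De Morgan diamond $\textbf{4}$ of Figure \ref{fig7}, so I would make this identification explicit to avoid confusion. The pseudocomplement $\sim$ and dual pseudocomplement $\neg$ on the chain $0<u_2<u_1<1$ are determined by the order ($\sim x=\max\{c:x\wedge c=0\}$ and $\neg x=\min\{c:x\vee c=1\}$), and one checks the Stone identities $\sim x\vee\sim\sim x=1$ and $\neg x\wedge\neg\neg x=0$ hold, so that $\textbf{4}$ is indeed a double Stone algebra and $\textbf{4}^{I}$ is one componentwise.

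Once these embedding facts are in hand the argument is purely formal: Birkhoff's theorem supplies the subdirect representation, the classification pins down the factors, and the uniform embedding into $\textbf{4}$ collapses the mixed product into a single power $\textbf{4}^{I}$. Thus the embedding $\mathcal{DS}\hookrightarrow\textbf{4}^{I}$ follows, which is exactly the statement of Theorem \ref{thm12A}. This mirrors the structure of the earlier Theorem (the analogous $\textbf{3}_{\sim}^{I}$ and $\textbf{3}_{\neg}^{I}$ results for Stone and dual Stone algebras), so the same template applies with $\textbf{4}$ in place of $\textbf{3}$.
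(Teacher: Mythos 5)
Your proposal is correct and follows essentially the same route as the paper, which likewise obtains this statement (cited to Balbes--Dwinger) by combining Birkhoff's subdirect representation theorem with the classification of the subdirectly irreducible double Stone algebras as $\textbf{1},\textbf{2},\textbf{3},\textbf{4}$ of Figure \ref{fig5} and absorbing the smaller factors into $\textbf{4}$. The only cosmetic caveat is that the trivial factor $\textbf{1}$ cannot literally be embedded into $\textbf{4}$ as a bounded algebra (it identifies $0$ and $1$); such factors are simply discarded from the subdirect decomposition, which does not affect the argument.
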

\noindent Hence in particular given a Boolean algebra $B$,  the double Stone algebra $B^{[3]}$ is embeddable into $4^{I}$ for index set $I$. Now, following similar idea as above, let us characterize completely join irreducible elements of $B^{[3]}$ and $4^{I}$. Let us denote by \\
\hspace*{2 cm}$f_{i}^{x}(k)  :=
\left\{
	\begin{array}{ll}
		x  & \mbox{if } k = i \\
		0 & otherwise
	\end{array}
\right.$  
\begin{proposition}
\begin{enumerate}
\item The set of completely join irreducible elements of the algebra {\rm$\textbf{4}^{I}$} is given by\\
\hspace*{2cm} $\mathcal{J}_{\textbf{4}^{I}} = \{f^{a}_{i},f^{b}_{i},f^{1}_{i}: i \in I\}$.\\
\noindent Moreover, $\mathcal{J}_{\textbf{4}^{I}}$ is join dense in $\textbf{4}^{I}$.
\item  Let $B$ be a complete atomic Boolean algebra. The set of completely join irreducible  elements  of $B^{[3]}$ is given by\\
\hspace*{2 cm} $\mathcal{J}_{B^{[3]}} = \{(0,0,a),(0,a,a),(a,a,a): a \in \mathcal{J}_{B}\}$.\vskip 2pt 
\noindent Moreover, $\mathcal{J}_{B^{[3]}}$ is join dense in $B^{[3]}$.
\end{enumerate}
\end{proposition}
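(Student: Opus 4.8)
The plan is to prove both parts exactly along the lines of Proposition~\ref{pp2}, since here $\textbf{4}^{I}$ and $B^{[3]}$ play the roles that $\textbf{3}^{I}$ and $B^{[2]}$ played there; the only genuinely new features are that the base lattice is now the four-element chain and the tuples have length three. Throughout I will use that joins in $\textbf{4}^{I}$ and in $B^{[3]}$ are computed coordinatewise, and the definition of complete join irreducibility from Definition~\ref{cjijd}: $g = \bigvee S$ forces $g \in S$.

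For part (1) I would first record that $\textbf{4} = \{0,a,b,1\}$ is the four-element chain $0 < a < b < 1$, so every nonzero element of $\textbf{4}$ is completely join irreducible (in a finite chain a join is its maximum, and the maximum is attained). To see each $f_i^{x}$ with $x \in \{a,b,1\}$ is completely join irreducible, suppose $f_i^{x} = \bigvee S$ for some $S \subseteq \textbf{4}^{I}$; evaluating coordinatewise forces every $g \in S$ to vanish off the coordinate $i$, and gives $x = \bigvee_{g \in S} g(i)$ in $\textbf{4}$, whence complete join irreducibility of $x$ in $\textbf{4}$ produces a $g \in S$ with $g(i) = x$, i.e.\ $g = f_i^{x} \in S$. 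Conversely, every $g \in \textbf{4}^{I}$ satisfies $g = \bigvee_{i \in I} f_i^{g(i)}$ coordinatewise, so a completely join irreducible $g$ must coincide with one of these join-ands and hence be supported at a single coordinate, i.e.\ $g = f_i^{x}$. The same identity exhibits every element as a join of members of $\mathcal{J}_{\textbf{4}^{I}}$, which is join density.

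For part (2) I would use that $B$ is complete atomic, so its atoms $\mathcal{J}_{B}$ are join dense and each atom is completely join irreducible in $B$. For the three listed tuples I verify complete join irreducibility directly: if, say, $(0,a,a) = \bigvee S$ in $B^{[3]}$, coordinatewise evaluation kills the first coordinate of every member of $S$ and yields $\bigvee\{y : (0,y,z) \in S\} = a$; complete join irreducibility of the atom $a$ produces a member with $y = a$, and the constraint $y \le z \le a$ forces $z = a$, so $(0,a,a) \in S$. The cases $(0,0,a)$ and $(a,a,a)$ are analogous, using the chain of inequalities $b_1 \le b_2 \le b_3$ to pin down the remaining coordinates. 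For the converse inclusion and for join density simultaneously, I would invoke the canonical decomposition
\[
(p,q,r) = \bigvee_{a \le p}(a,a,a) \ \vee\ \bigvee_{a \le q}(0,a,a) \ \vee\ \bigvee_{a \le r}(0,0,a),
\]
where each join ranges over atoms $a \in \mathcal{J}_{B}$; a coordinatewise check, using $p \le q \le r$ so that the three coordinates evaluate to $p$, $q$, $r$ respectively, shows this equals $(p,q,r)$. This already gives join density of $\mathcal{J}_{B^{[3]}}$, and if $(p,q,r)$ is completely join irreducible it must equal one of the join-ands, hence be of the listed form.

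The computations are routine; the one point deserving care — and the natural place for a slip — is the interaction of complete join irreducibility of atoms in $B$ with the monotonicity constraint $b_1 \le b_2 \le b_3$ defining $B^{[3]}$, since it is exactly this constraint that forces the ``staircase'' shapes $(0,0,a)$, $(0,a,a)$, $(a,a,a)$ and excludes mixed tuples such as $(0,a,b)$ with distinct atoms. I would therefore check that the coordinatewise joins appearing above stay inside $B^{[3]}$ (a coordinatewise join of monotone triples is monotone) and that in each irreducibility argument the extracted atom $a$ does drag the larger coordinates up to $a$ as claimed.
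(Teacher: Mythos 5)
Your proof is correct and follows exactly the route the paper intends: the paper itself omits the argument, stating only that it is ``very similar to Proposition~\ref{pp2}'' and deferring to \cite{KB17}, and your write-up is a faithful and complete elaboration of that analogy (coordinatewise joins, complete join irreducibility of nonzero elements of the chain $\textbf{4}$ and of atoms of $B$, and the canonical decompositions giving join density). The only cosmetic caveat is that the chain $\textbf{4}$ is labelled $0<u_2<u_1<1$ in Figure~\ref{fig5} while the proposition uses $a,b,1$; your identification $0<a<b<1$ is the intended one.
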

\begin{proof}
Proof of this proposition is very similar to the Proposition \ref{pp2} and can be seen \cite{KB17}.
\end{proof}
\noindent The order structure of $\mathcal{J}_{\textbf{4}^{I}}$ and $\mathcal{J}_{(\textbf{2}^{I})^{[3]}}$ can be visualized in the Figure \ref{fig6}.
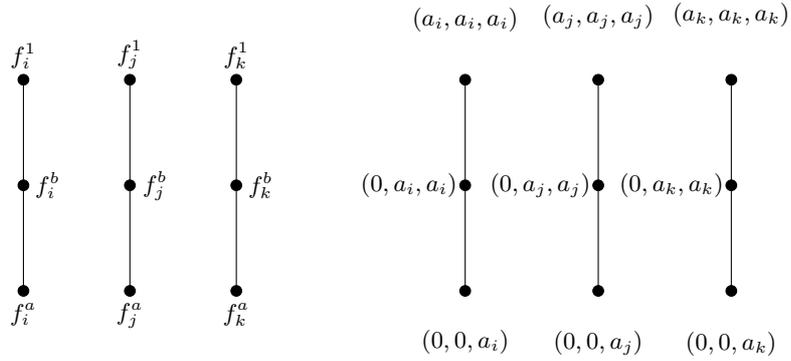
\begin{figure}[h] 
\begin{tikzpicture}[scale=.7]
    \tikzstyle{every node}=[draw,circle,fill=black,minimum size=4pt,
                            inner sep=0pt]

    \draw [dotted] (0,1) -- (0,1);
    
    \draw (3.2,0) -- (3.2,2);
		\draw (3.2,0) node [label=below:{\it $f_{i}^{a}$}]{};
		\draw (3.2,2) node [label=right:{\it $f_{i}^{b}$}]{};
		\draw (3.2,2) -- (3.2,4);
		\draw (3.2,4) node [label=above:{\it $f_{i}^{1}$}]{};
		\draw (5.2,0) -- (5.2,2);
		\draw (5.2,0) node [label=below:{\it $f_{j}^{a}$}]{};
		\draw (5.2,2) node [label=right:{\it $f_{j}^{b}$}]{};
		\draw (5.2,2) -- (5.2,4);
		\draw (5.2,4) node [label=above:{\it $f_{j}^{1}$}]{};
		\draw (7.2,0) -- (7.2,2);
		\draw (7.2,0) node [label=below:{\it $f_{k}^{a}$}]{};
		\draw (7.2,2) node [label=right:{\it $f_{k}^{b}$}]{};
		\draw (7.2,2) -- (7.2,4);
		\draw (7.2,4) node [label=above:{\it $f_{k}^{1}$}]{};
  \draw (11.5,0) -- (11.5,2);
		\draw (11.5,0) node [label=below:{\it $(0,0,a_{i})$}]{};
		\draw (11.5,2) node [label=left:{\it $(0,a_{i},a_{i})$}]{};
		\draw (11.5,2) -- (11.5,4);
		\draw (11.5,4) node [label=above:{\it $(a_{i},a_{i},a_{i})$}]{};
		\draw (14,0) -- (14,2);
		\draw (14,0) node [label=below:{\it $(0,0,a_{j})$}]{};
		\draw (14,2) node [label=left:{\it $(0,a_{j},a_{j})$}]{};
		\draw (14,2) -- (14,4);
		\draw (14,4) node [label=above:{\it $(a_{j},a_{j},a_{j})$}]{};
		\draw (16.5,0) -- (16.5,2);
		\draw (16.5,0) node [label=below:{\it $(0,0,a_{k})$}]{};
		\draw (16.5,2) node [label=left:{\it $(0,a_{k},a_{k})$}]{};
		\draw (16.5,2) -- (16.5,4);
		\draw (16.5,4) node [label=above:{\it $(a_{k},a_{k},a_{k})$}]{};
\end{tikzpicture}
\caption{Hasse diagram of $\mathcal{J}_{\textbf{4}^{I}}$}
\label{fig6}
\end{figure}

 
%

%
  %
    %
		%
\begin{theorem}\label{thm12} 
\blr
 \item The  sets of completely join irreducible elements of 
 {\rm $\textbf{4}^{I}$} and {\rm $(\textbf{2}^{I})^{[3]}$} are order isomorphic.
The  algebras {\rm $\textbf{4}^{I}$} and {\rm $(\textbf{2}^{I})^{[3]}$}
are lattice isomorphic.
\item The double Stone algebras {\rm $\textbf{4}^{I}$} and {\rm $(\textbf{2}^{I})^{[3]}$}
are  isomorphic.
\item Let $\mathcal{DBS} $ be a double Stone  algebra. There exists a (index) set $I$ such that $\mathcal{DBS}$ can be 
embedded into Stone algebra {\rm $(\textbf{2}^{I})^{[3]}$}.

\elr
\end{theorem}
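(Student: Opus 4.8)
The plan is to follow exactly the template established in the earlier theorems for the two-element case (Theorem \ref{thm4}), now adapted to the three-tuple setting. The three parts of Theorem \ref{thm12} are in strict logical dependence, so I would prove them in order. For part (i), the stated order isomorphism between $\mathcal{J}_{\textbf{4}^{I}}$ and $\mathcal{J}_{(\textbf{2}^{I})^{[3]}}$ is read directly off Figure \ref{fig6} and the preceding Proposition: specialising the Proposition to $B = \textbf{2}^{I}$, whose completely join irreducible elements are exactly the $f_{i}^{1}$ (i.e.\ the atoms $a_{i}$), gives $\mathcal{J}_{(\textbf{2}^{I})^{[3]}} = \{(0,0,a_{i}),(0,a_{i},a_{i}),(a_{i},a_{i},a_{i}) : i \in I\}$, while $\mathcal{J}_{\textbf{4}^{I}} = \{f_{i}^{a},f_{i}^{b},f_{i}^{1} : i \in I\}$. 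First I would define $\phi$ by $f_{i}^{a} \mapsto (0,0,a_{i})$, $f_{i}^{b} \mapsto (0,a_{i},a_{i})$, $f_{i}^{1} \mapsto (a_{i},a_{i},a_{i})$, and check that this is a bijection preserving order in both directions. Since both posets are disjoint unions of three-element chains indexed by $I$ (with no comparabilities across distinct indices), the order-isomorphism check reduces to verifying that the three generators at each index form a chain in the same orientation on both sides, which is immediate.

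For the lattice isomorphism asserted at the end of part (i), I would invoke Lemma \ref{lemma1} (Birkhoff's extension lemma). Both $\textbf{4}^{I}$ and $(\textbf{2}^{I})^{[3]}$ are completely distributive lattices, and by the Proposition their completely join irreducible elements are join dense; so the order isomorphism $\phi$ extends canonically to a lattice isomorphism $\Phi(x) := \bigvee(\phi(J(x)))$. This is a direct application with no new content beyond checking the completeness and join-density hypotheses, both of which are supplied by the Proposition.

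For part (ii) I must upgrade the lattice isomorphism $\Phi$ to a double Stone isomorphism, i.e.\ show it commutes with both $\sim$ and $\neg$. The key point, already flagged in the remark preceding Theorem \ref{thm4} (``the pseudo and dual pseudo negations \ldots are defined via the order''), is that in any Stone (resp.\ dual Stone) algebra the operation $\sim a = \max\{c : a \wedge c = 0\}$ (resp.\ $\neg a = \min\{c : a \vee c = 1\}$) is determined purely by the lattice order. Since $\Phi$ is a lattice isomorphism between bounded lattices it preserves $0$, $1$, meets and joins, hence preserves these extremal-element descriptions, so $\Phi(\sim a) = \sim \Phi(a)$ and $\Phi(\neg a) = \neg \Phi(a)$ automatically. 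I would state this as the one substantive observation and note that it gives the double Stone isomorphism at once.

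Finally, part (iii) follows by composing embeddings: by Theorem \ref{thm12A} an arbitrary double Stone algebra $\mathcal{DBS}$ embeds (as a double Stone algebra) into $\textbf{4}^{I}$ for some index set $I$, and by part (ii) $\textbf{4}^{I} \cong (\textbf{2}^{I})^{[3]}$ as double Stone algebras; composing yields the desired embedding of $\mathcal{DBS}$ into $(\textbf{2}^{I})^{[3]}$. The only genuinely nontrivial step is the negation-preservation argument in part (ii), and even that is routine once one records that both negations are order-theoretic; the main obstacle is simply being careful that the single lattice isomorphism $\Phi$ respects \emph{both} unary operations simultaneously, which it does precisely because each is order-definable from the common lattice structure.
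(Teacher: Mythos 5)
Your proposal is correct and follows essentially the same route as the paper: the same explicit map on completely join irreducible elements, extension to a lattice isomorphism via Lemma \ref{lemma1}, upgrading to a double Stone isomorphism, and composition with Theorem \ref{thm12A} for part (iii). Your justification that $\Phi$ preserves $\sim$ and $\neg$ because both are order-definable ($\max\{c : a \wedge c = 0\}$, $\min\{c : a \vee c = 1\}$) is in fact cleaner than the paper's appeal to double Stone algebras being ``equational algebras,'' which by itself does not explain why a lattice isomorphism respects the extra unary operations.
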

\begin{proof}
We define the map 
{\rm $\phi: \mathcal{J}_{\textbf{4}^{I}} \rightarrow \mathcal{J}_{(\textbf{2}^{I})^{[3]}}$} as follows. For $i \in I$,\\
\hspace*{3 cm} $\phi(f_{i}^{u_{2}}) := (0,0,g_{i}^{1})$,\\
\hspace*{3 cm} $\phi(f_{i}^{u_{1}}) := (0,g_{i}^{1},g_{i}^{1})$,\\
\hspace*{3 cm} $\phi(f_{i}^{1}) := (g_{i}^{1},g_{i}^{1},g_{i}^{1})$.\\
Here $g_{i}^{1}$ is an atom of the Boolean algebra $\textbf{2}^{I}$, defined as follows:\\
\hspace*{3 cm}$g_{i}^{1}(k)  :=
\left\{
	\begin{array}{ll}
		1  & \mbox{if } k = i \\
		0 & otherwise
	\end{array}
\right.$

\noindent It can be easily seen that $\phi$ is an order isomorphism. Hence using Lemma \ref{lemma1},  
$\phi$ can be extended to {\rm $\textbf{4}^{I}$} as $\Phi$, then $\Phi$ is a lattice isomorphism from  {\rm $\textbf{4}^{I}$} to {\rm $(\textbf{2}^{I})^{[3]}$}.  Now, as, double Stone algebras are equational algebras, so the extended map of $\Phi$ is also isomorphic as double Stone algebras. This proves $(i)$ and $(ii)$. $(iii)$ follows from $(ii)$ and \ref{thm12A}.
\end{proof}
\noindent Now, let illustrate this Theorem through an example.
\begin{example}\label{ex1}
Let us consider a lattice whose Hasse diagram is given in the Figure \ref{fig6A}. Note that $\sim 1 = \sim a  = \sim b = \sim c = \sim d = 0 = \neg 1$ and $\neg 0 = \neg a  = \neg b = \neg c = \neg d = 1 = \sim 1$. It can be easily verified that  the given lattice is also a double Stone algebra. The given double Stone algebra is isomorphic (Figure \ref{fig6A}) to a subalgebra double Stone algebra $(\textbf{2}^{2})^{[2]}$. Here $\textbf{2}^{2}$ is the 4-element Boolean algebra $\{0,x,y,1\}$. 
\begin{figure}[h] 
\begin{tikzpicture}[scale=.7]
    \tikzstyle{every node}=[draw,circle,fill=black,minimum size=4pt,
                            inner sep=0pt]

    \draw [dotted] (0,1) -- (0,1);

		\draw (3.2,0) -- (3.2,1.5);
		\draw (3.2,0) node [label=below:{0}]{};
		\draw (3.2,1.5) node [label=right:{a}]{};
		\draw (3.2,5) -- (3.2,6.5);
		\draw (3.2,5) node [label=right:{d}]{};
		\draw (3.2,6.5) node [label=above:{1}]{};
		\draw (3.2,5) -- (1,3.25);
		\draw (1,3.25) node [label=left:{b}]{};
		\draw (3.2,1.5) -- (1,3.25);
		\draw (3.2,1.5) -- (5.4,3.25);
		\draw (3.2,5) -- (5.4,3.25);
    \draw (5.4,3.25) node [label=right:{c}]{};
\draw (11,0) -- (11,1.5);
		\draw (11,0) node [label=below:{$(0,0,0)$}]{};
		\draw (11,1.5) node [label=right:{$(0,y,y)$}]{};
		\draw (11,5) -- (11,6.5);
		\draw (11,5) node [label=right:{$(y,y,1)$}]{};
		\draw (11,6.5) node [label=above:{$(1,1,1)$}]{};
		\draw (11,5) -- (8.8,3.25);
		\draw (8.8,3.25) node [label=left:{$(0,y,1)$}]{};
		\draw (11,1.5) -- (8.8,3.25);
		\draw (11,1.5) -- (13.2,3.25);
		\draw (11,5) -- (13.2,3.25);
    \draw (13.2,3.25) node [label=right:{$(y,y,y)$}]{};		
		
\end{tikzpicture}
\caption{Hasse diagram of isomorphism in Example \ref{ex1}}
\label{fig6A}
\end{figure}
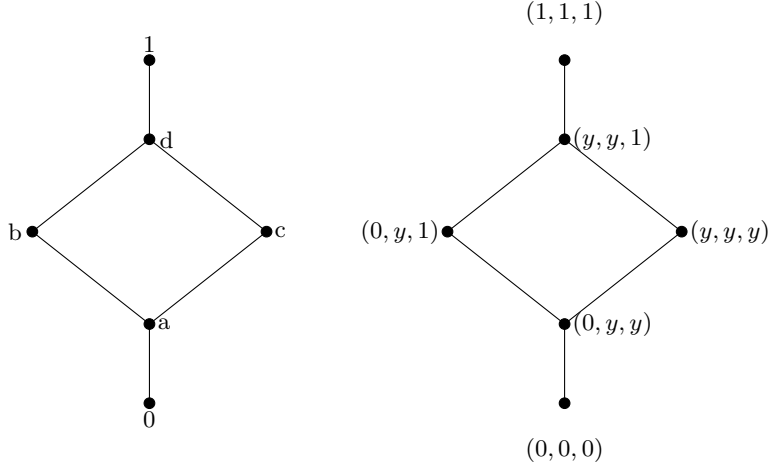

\end{example}
We end this section by providing an easy consequence of Stone's representation and Theorem \ref{thm12}.  
\begin{theorem}\label{thm13}
Given a double Stone algebra $\mathcal{DBS} = (DBS,\vee,\wedge,\sim,\neg,0,1)$, then there exists a set $U$ such that $\mathcal{DBS}$ can be embedded into double Stone algebra formed by the set $(\mathcal{P}(U))^{[3]}$. 
\end{theorem}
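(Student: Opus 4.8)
The plan is to chain together two isomorphism/embedding results already established in the excerpt, using Stone's representation theorem for Boolean algebras as the bridge. The target statement asserts that any double Stone algebra $\mathcal{DBS}$ embeds into $(\mathcal{P}(U))^{[3]}$ for some set $U$. By Theorem \ref{thm12}, part (iii), we already know that $\mathcal{DBS}$ embeds into $(\textbf{2}^{I})^{[3]}$ for a suitable index set $I$, as double Stone algebras. So the only remaining work is to replace the Boolean algebra $\textbf{2}^{I}$ by a powerset algebra $\mathcal{P}(U)$ and to observe that the construction $B \mapsto B^{[3]}$ respects Boolean isomorphisms (and hence embeddings).

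First I would recall that $\textbf{2}^{I}$ is itself isomorphic to the powerset Boolean algebra $\mathcal{P}(I)$ via the standard correspondence between characteristic functions $I \to \textbf{2}$ and subsets of $I$; more generally, Stone's representation theorem guarantees that any Boolean algebra $B$ embeds into some $\mathcal{P}(U)$. Taking $U := I$ gives a Boolean isomorphism $\psi: \textbf{2}^{I} \to \mathcal{P}(U)$. Second, I would verify that such a Boolean isomorphism $\psi: B \to B'$ lifts componentwise to a double Stone isomorphism $\psi^{[3]}: B^{[3]} \to (B')^{[3]}$ given by $\psi^{[3]}(a,b,c) := (\psi(a),\psi(b),\psi(c))$. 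This is well-defined because $\psi$ is order-preserving, so $a \le b \le c$ forces $\psi(a) \le \psi(b) \le \psi(c)$; it clearly respects the componentwise $\vee$ and $\wedge$; and it respects the two negations, since $\sim(a,b,c) = (c^{c},c^{c},c^{c})$ and $\psi(c^{c}) = \psi(c)^{c}$ because $\psi$ is a Boolean homomorphism (similarly for $\neg$). Composing the double Stone embedding $\mathcal{DBS} \hookrightarrow (\textbf{2}^{I})^{[3]}$ from Theorem \ref{thm12}(iii) with the double Stone isomorphism $(\textbf{2}^{I})^{[3]} \to (\mathcal{P}(U))^{[3]}$ yields the desired embedding.

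The main routine obligation, and the only place any real checking is needed, is the lifting lemma in the second step: confirming that a Boolean isomorphism lifts to a double Stone isomorphism on the $[3]$-construction. The key observation making this painless is that both negations $\sim$ and $\neg$ on $B^{[3]}$ are defined purely through the Boolean complement on a single coordinate (the top coordinate for $\sim$, the bottom coordinate for $\neg$), so preservation of negation reduces entirely to the fact that $\psi(x^{c}) = \psi(x)^{c}$. There is no genuine obstacle here, since the argument is a direct transport of structure along an already-available Boolean isomorphism; the theorem is, as the excerpt itself signals, \emph{an easy consequence} of Stone's representation and Theorem \ref{thm12}. I would therefore present the proof compactly, stating the lifting observation and then chaining the two maps, rather than expanding the componentwise verifications in full.
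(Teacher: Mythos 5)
Your proposal is correct and follows essentially the same route the paper intends: the paper presents Theorem \ref{thm13} without an explicit proof, flagging it as an easy consequence of Stone's representation theorem and Theorem \ref{thm12}, which is precisely your chain of $\mathcal{DBS} \hookrightarrow (\textbf{2}^{I})^{[3]} \cong (\mathcal{P}(I))^{[3]}$. Your explicit verification that a Boolean isomorphism lifts componentwise to a double Stone isomorphism of the $[3]$-construction (reducing preservation of $\sim$ and $\neg$ to $\psi(x^{c})=\psi(x)^{c}$) fills in exactly the routine detail the paper leaves implicit.
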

\subsection{4-valued semantics of logic for double Stone  algebras}
%
The idea of arriving at 4-valued semantics of $\mathcal{L}_{\mathcal{DBS}}$ is to represent an element of a given double Stone algebra as a tuple $(A,B,C)$, where $A\subseteq B \subseteq C$ are subsets of some set $U$. This proposition is established by Theorem \ref{thm13}.
So for any $x \in U$, there are 4 possibilities. 

\noindent \hspace{3 cm} $x \in A$,  $x \in B\setminus A$, $x \in C\setminus B$, and $x \notin C$.\\
\noindent These 4 possibilities leads to the 4-valued semantics of the  logic $\mathcal{L_{\mathcal{DBS}}}$.  
Now, let us formally present the concerned logic $\mathcal{L_{\mathcal{DBS}}}$. Let $\mathcal{F}_{\sim,\neg}$ denote the set of formulae  by extending the syntax of   $BDLL$ by adding  unary connectives $\sim, \neg$. The logic  $\mathcal{L_{\mathcal{DBS}}}$ denote the logic $BDLL$ along with  following rules and postulates.
\begin{enumerate} 
\item $\sim \alpha \wedge \sim \beta \vdash \sim(\alpha \vee \beta)$, 
 $\neg(\alpha \wedge \beta) \vdash \neg\alpha \vee \neg\beta$.
\item $\top \vdash \sim \bot$, 
$\neg\top \vdash  \bot$.
\item $\alpha \vdash \beta / \sim\beta \vdash  \sim\alpha$, 
$\alpha \vdash \beta / \neg\beta \vdash  \neg\alpha$.
\item $\sim \alpha \wedge \neg \beta \vdash \neg (\alpha \vee \beta)$, 
 $\sim (\alpha \wedge \beta) \vdash \sim\alpha \vee \neg\beta$.
			\item $\alpha ~\wedge \sim \alpha \vdash \bot$, $\alpha \vee \neg \alpha \vdash \bot$.
			\item $\alpha \wedge \beta \vdash \gamma /\alpha~ \wedge \sim \gamma \leq \sim \beta$,  $\gamma \vdash \alpha \vee \beta/ \neg \beta \vdash \alpha \vee \neg \gamma$.
			\item $\top \vdash \sim \alpha~ \vee \sim\sim \alpha $, $\neg \alpha \wedge \neg\neg \alpha \vdash \bot$.

			\end{enumerate}
With usual constructions of Lindenbaum–Tarski algebra and an easy consequence of Theorem \ref{thm13} is.			
\begin{theorem}
The followings are equivalent:
\begin{enumerate}
\item $\alpha \vdash_{\mathcal{L}_{DBS}} \beta$.  
\item $\alpha \vDash_{\mathcal{A}_{DBS}} \beta$.
\item $\alpha \vDash_{\mathcal{DBS}B^{[2]}} \beta$.
\item $\alpha \vDash_{\mathcal{DBS}(\mathcal{P}(U))^{[3]}} \beta$.

\end{enumerate}
\end{theorem}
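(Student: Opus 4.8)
The plan is to establish the chain of equivalences by the standard Lindenbaum–Tarski method, exactly parallel to the treatment of $\mathcal{L}_S$ and $\mathcal{L}_{DS}$ in Theorems \ref{thm6} and \ref{thm7}, with Theorem \ref{thm13} supplying the final link. First I would prove the equivalence of (1) and (2), i.e. soundness and completeness of $\mathcal{L}_{DBS}$ with respect to the class $\mathcal{A}_{DBS}$ of all double Stone algebras. Soundness (2)$\Rightarrow$(1) is routine: one checks that each of the seven postulate schemes is valid under every valuation in every double Stone algebra, using that $\sim$ is a pseudocomplement, $\neg$ a dual pseudocomplement, and the Stone and dual Stone identities $\sim\alpha \vee \sim\sim\alpha = 1$ and $\neg\alpha \wedge \neg\neg\alpha = 0$. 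For completeness (1)$\Rightarrow$(2), I would construct the Lindenbaum–Tarski algebra: define $\alpha \equiv \beta$ iff $\alpha \vdash_{\mathcal{L}_{DBS}} \beta$ and $\beta \vdash_{\mathcal{L}_{DBS}} \alpha$, verify using the postulates that $\equiv$ is a congruence compatible with $\wedge, \vee, \sim, \neg, \top, \bot$, and check that the quotient $\mathcal{F}_{\sim,\neg}/\!\equiv$ is a double Stone algebra. The canonical valuation sending each formula to its equivalence class then witnesses that any non-derivable consequent fails in some member of $\mathcal{A}_{DBS}$.

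Next I would handle the equivalences (2)$\Leftrightarrow$(3) and (3)$\Leftrightarrow$(4), which are purely semantic and follow from the representation results already in hand. Since $\mathcal{DBS}B^{[2]}$ and $\mathcal{DBS}(\mathcal{P}(U))^{[3]}$ are subclasses of $\mathcal{A}_{DBS}$, validity in $\mathcal{A}_{DBS}$ immediately gives validity in these smaller classes, so (2)$\Rightarrow$(3) and (3)$\Rightarrow$(4) (likewise the descent to $(4)$) are trivial. For the reverse directions I would invoke the embedding theorems: Theorem \ref{thm12} embeds any double Stone algebra into $(\textbf{2}^I)^{[3]}$, and Theorem \ref{thm13} embeds it into a double Stone algebra on $(\mathcal{P}(U))^{[3]}$. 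Because an embedding is an order-preserving and order-reflecting homomorphism, a consequent $\alpha \vdash \beta$ failing in some $\mathcal{A} \in \mathcal{A}_{DBS}$ (i.e. $v(\alpha) \not\leq v(\beta)$ for some valuation $v$) is transported, by composing $v$ with the embedding, to a failure in an algebra built over $(\mathcal{P}(U))^{[3]}$. This gives (4)$\Rightarrow$(2), closing the cycle.

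The main obstacle I anticipate is the completeness step (1)$\Rightarrow$(2), specifically verifying that the Lindenbaum–Tarski quotient is genuinely a double Stone algebra rather than merely a bounded distributive lattice with two extra unary operations. The distributive lattice structure and bounds come from the $BDLL$ postulates, but one must extract from postulate schemes (1)--(7) that the class $[\sim\alpha]$ really is the pseudocomplement of $[\alpha]$ and $[\neg\alpha]$ the dual pseudocomplement, together with the two Stone identities. Concretely, the contraposition rules give antitonicity of $\sim$ and $\neg$; schemes such as $\alpha \wedge \sim\alpha \vdash \bot$ and $\alpha \wedge \beta \vdash \gamma / \alpha \wedge \sim\gamma \vdash \sim\beta$ are what force $[\sim\alpha]$ to be the \emph{greatest} class meeting $[\alpha]$ in $[\bot]$, i.e. the pseudocomplement, and dually for $\neg$; finally $\top \vdash \sim\alpha \vee \sim\sim\alpha$ and $\neg\alpha \wedge \neg\neg\alpha \vdash \bot$ deliver the Stone and dual Stone axioms. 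I would present these verifications one operation at a time. Once the quotient is known to be a double Stone algebra, the remaining bookkeeping is routine and mirrors the $\mathcal{L}_S$ and $\mathcal{L}_{DS}$ cases.
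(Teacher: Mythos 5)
Your proposal is correct and takes essentially the same route as the paper, which itself only gestures at ``the usual constructions of Lindenbaum--Tarski algebra'' together with Theorem \ref{thm13}; your writeup simply fills in the soundness check, the verification that the quotient is a double Stone algebra, and the transport of countermodels along the embeddings, all of which the paper leaves implicit. One small caveat: when you check soundness of the seven postulates, note that the second half of postulate (5) as printed ($\alpha \vee \neg\alpha \vdash \bot$) is not valid in double Stone algebras and is evidently a typo for $\top \vdash \alpha \vee \neg\alpha$, so your verification should be carried out against the corrected scheme.
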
	
\begin{theorem}\label{thm15}
$\alpha \vDash_{\mathcal{DBS}_{(\mathcal{P}(U))^{[3]}}} \beta$ if and only if $\alpha \vDash_{4} \beta$, for any $\alpha,\beta \in \mathcal{F}$.
\end{theorem}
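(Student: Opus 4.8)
The plan is to transcribe, almost verbatim, the argument used for the Stone case (the completeness theorem proved immediately after Lemma~\ref{lemma2}), replacing the three-block partition $\{A,\, B\setminus A,\, B^{c}\}$ by the four-block partition that a monotone triple induces. Concretely, given a set $U$ and a monotone triple $(A,B,C)$ with $A \subseteq B \subseteq C \subseteq U$, the collection $\{A,\, B\setminus A,\, C\setminus B,\, C^{c}\}$ partitions $U$, so every $x \in U$ lies in exactly one block; these four possibilities are precisely the four truth values of $\textbf{4}$ (the chain $0 < u_{2} < u_{1} < 1$ of Figure~\ref{fig5}).

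For the forward direction, I would observe that $\textbf{4}$ is, up to isomorphism, already a member of the class $\mathcal{DBS}_{(\mathcal{P}(U))^{[3]}}$: taking $U$ a singleton gives $\mathcal{P}(U) = \textbf{2}$, and by Theorem~\ref{thm12} we have $\textbf{2}^{[3]} \cong \textbf{4}$ as double Stone algebras. Hence validity in the whole class specialises to validity in $\textbf{4}$, i.e. $\alpha \vDash_{4} \beta$. (Equivalently, one composes a valuation on $\textbf{4}$ with the order-embedding of $\textbf{4}$ into $(\mathcal{P}(U))^{[3]}$ furnished by Theorem~\ref{thm13} and uses that the embedding reflects $\leq$.)

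The substance lies in the converse. Assuming $\alpha \vDash_{4} \beta$, I fix a set $U$ and a valuation $v$ on $(\mathcal{P}(U))^{[3]}$, and must show $v(\alpha) \leq v(\beta)$. For each $x \in U$ I define $v_{x}: \mathcal{F}_{\sim,\neg} \to \textbf{4}$ by reading off the block of $x$: if $v(\gamma) = (A,B,C)$ then $v_{x}(\gamma) := 1,\, u_{1},\, u_{2},\, 0$ according as $x \in A$, $x \in B\setminus A$, $x \in C\setminus B$, or $x \notin C$. One then checks $v_{x}$ is a valuation on $\textbf{4}$. Preservation of $\wedge$ and $\vee$ is routine: since the operations on $(\mathcal{P}(U))^{[3]}$ are componentwise and the blocks are nested, we have $x \in A$ iff $v_{x}(\gamma) = 1$, $x \in B$ iff $v_{x}(\gamma) \geq u_{1}$, and $x \in C$ iff $v_{x}(\gamma) \geq u_{2}$, so intersection corresponds to $\min$ and union to $\max$ in the chain. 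The load-bearing step is the interaction with the two negations: from $\sim(A,B,C) = (C^{c},C^{c},C^{c})$ one gets $v_{x}(\sim\gamma) = 1$ exactly when $x \notin C$, i.e. when $v_{x}(\gamma) = 0$, which matches $\sim$ on $\textbf{4}$; from $\neg(A,B,C) = (A^{c},A^{c},A^{c})$ one gets $v_{x}(\neg\gamma) = 0$ exactly when $x \in A$, i.e. when $v_{x}(\gamma) = 1$, which matches $\neg$ on $\textbf{4}$. The constants are handled by $v(\bot) = (\emptyset,\emptyset,\emptyset)$ and $v(\top) = (U,U,U)$.

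With $v_{x}$ a valuation, the hypothesis $\alpha \vDash_{4} \beta$ gives $v_{x}(\alpha) \leq v_{x}(\beta)$ for every $x$. Writing $v(\alpha) = (A_{1},B_{1},C_{1})$ and $v(\beta) = (A_{2},B_{2},C_{2})$, the three inclusions needed for $v(\alpha) \leq v(\beta)$ drop out of this pointwise inequality together with the level description: $x \in A_{1} \Rightarrow v_{x}(\alpha) = 1 \Rightarrow v_{x}(\beta) = 1 \Rightarrow x \in A_{2}$, and likewise $x \in B_{1} \Rightarrow v_{x}(\alpha) \geq u_{1} \Rightarrow v_{x}(\beta) \geq u_{1} \Rightarrow x \in B_{2}$ and $x \in C_{1} \Rightarrow v_{x}(\alpha) \geq u_{2} \Rightarrow v_{x}(\beta) \geq u_{2} \Rightarrow x \in C_{2}$. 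Thus $A_{1} \subseteq A_{2}$, $B_{1} \subseteq B_{2}$, $C_{1} \subseteq C_{2}$, whence $v(\alpha) \leq v(\beta)$, finishing the converse. I expect the only real care to be in the two negation cases: because $\sim$ is read from the top coordinate $C$ while $\neg$ is read from the bottom coordinate $A$, the two are asymmetric, so—unlike the De Morgan situation where a single order-reversal suffices—the truth-value conditions for $\sim$ and for $\neg$ must be matched against the chain $\textbf{4}$ separately.
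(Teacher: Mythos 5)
Your proposal is correct and follows essentially the same route as the paper: the forward direction by specialising to $\textbf{4}\cong(\mathcal{P}(\{*\}))^{[3]}$ via Theorems \ref{thm12} and \ref{thm13}, and the converse by the four-block partition $\{A,\,B\setminus A,\,C\setminus B,\,C^{c}\}$, the induced valuations $v_{x}$ in $\textbf{4}$, and the verification that the constant triples $(C^{c},C^{c},C^{c})$ and $(A^{c},A^{c},A^{c})$ match $\sim$ and $\neg$ on the chain. The only cosmetic difference is that you derive all three inclusions directly from the level description $x\in B \Leftrightarrow v_{x}(\gamma)\geq u_{1}$, etc., whereas the paper argues the third inclusion contrapositively; both are fine.
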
 
\begin{proof}
Let $\alpha \vDash_{\mathcal{DBS}_{(\mathcal{P}(U))^{[3]}}} \beta$, and
 $v: \mathcal{F} \rightarrow \textbf{4}$ be a valuation. By Theorem \ref{thm13},
$\textbf{4}$ is isomorphic to a double Stone algebra of $\mathcal{P}(U)^{[3]}$ for some set $U$. If this isomorphism is denoted by $\phi$,  $\phi \circ v$ is  a valuation in  $\mathcal{P}(U)^{[3]}$. Then $(\phi \circ v)(\alpha) \leq (\phi \circ v)(\beta)$ implies $v(\alpha) \leq  v(\beta)$. \vskip 2pt

\noindent Now, let $\alpha \vDash_{4} \beta$. Let $U$ be a set, and $\mathcal{P}(U)^{[3]}$ be the corresponding double Stone algebra.
  Let $v$ be a valuation on $\mathcal{P}(U)^{[3]}$ -- we need to show $v(\alpha) \leq v(\beta)$. For any $\gamma \in \mathcal{F}$ with $v(\gamma) := (A,B,C),~A,B,C \subseteq U,$ and for each $x \in U$, define a map $v_{x}: \mathcal{F} \rightarrow \textbf{4}$ as\\
\hspace*{2 cm}$v_{x}(\gamma)  :=
\left\{
	\begin{array}{lll}
		t & \mbox{if }   x \in A \\
		u_{1} & \mbox{if } x \in B  \setminus A  \\
		u_{2} & \mbox{if } x \in C  \setminus B  \\
		f & \mbox{if } x \notin C .
	\end{array}
\right.$ \vskip 2pt 
\noindent  Consider any $\gamma, \delta \in \mathcal{F}$, with  $v(\gamma) := (A,B,C)$ and $v(\delta) := (D,E,F),~A,B,C,D,E,F \subseteq U$. It is easy to show that: $v_{x}(\gamma \wedge \delta) = v_{x}(\gamma) \wedge v_{x}(\delta)$, $v_{x}(\gamma \vee \delta) = v_{x}(\gamma) \vee v_{x}(\delta)$.
Let us show the followings. 
\begin{itemize} \item
$v_{x}(\sim\gamma) = \sim v_{x}(\gamma)$.

\noindent Note that $ v(\sim \gamma)= (C^{c},C^{c},C^{c})$.\\
\noindent \un{Case 1} $v_{x}(\gamma) = t$: Then $x \in A$, i.e. $x \notin A^{c}$ and so $x \notin C^{c}$. Hence 
          $v_{x}(\sim\gamma)  = f = \sim v_{x}(\gamma)$.\\
\noindent \un{Case 2} $v_{x}(\gamma) = u_{1}$:  $x \notin A$ but $x \in B$, i.e.,  $x \notin B^{c}$ so, $x \notin C^{c}$. Hence 
          $v_{x}(\sim\gamma)  = f = \sim v_{x}(\gamma)$.\\
\noindent \un{Case 3} $v_{x}(\gamma) = u_{2}$:  $x \notin B$ but $x \in C$, i.e.,  $x \notin C^{c}$. Hence 
          $v_{x}(\sim\gamma)  = f = \sim v_{x}(\gamma)$.\\						
\noindent \un{Case 4} $v_{x}(\gamma) = f$:  $x \notin C$, i.e. $x \in C^{c}$. So 
          $v_{x}(\sim\gamma)  = t = \sim v_{x}(\gamma)$.
\item $v_{x}(\neg\gamma) = \neg v_{x}(\gamma)$.		

\noindent Note that $ v(\neg \gamma)= (A^{c},A^{c},A^{c})$.\\
\noindent \un{Case 1} $v_{x}(\gamma) = t$: Then $x \in A$, i.e. $x \notin A^{c}$.  Hence 
          $v_{x}(\neg\gamma)  = f = \neg v_{x}(\gamma)$.\\
\noindent \un{Case 2} $v_{x}(\gamma) = u_{1}$:  $x \notin A$ but $x \in B$, i.e.,  $x \in A^{c}$.  Hence 
          $v_{x}(\neg\gamma)  = t = \neg v_{x}(\gamma)$.\\
\noindent \un{Case 3} $v_{x}(\gamma) = u_{2}$:  $x \notin B$ but $x \in C$, i.e.,  $x \in B^{c}$ so $x \in A^{c}$. Hence 
          $v_{x}(\neg\gamma)  = t = \neg v_{x}(\gamma)$.\\						
\noindent \un{Case 4} $v_{x}(\gamma) = f$:  $x \notin C$, i.e. $x \in C^{c}$. So 
          $v_{x}(\neg\gamma)  = t = \neg v_{x}(\gamma)$.												
														
\end{itemize}

\noindent Hence $v_{x}$ is a valuation in $\textbf{4}$. Now let us show that $v(\alpha) \leq v(\beta)$. 
Let  $v(\alpha) := (A^{\prime},B^{\prime},C^{\prime})$, 
$v(\beta) := (D^{\prime},E^{\prime},F^{\prime})$. 

Let $x \in A^{\prime}$ then $v_{x}(\alpha) = t$. As $\alpha \vDash_{4} \beta$ hence $v_{x}(\beta) = t$. This implies $x \in D^{\prime}$, whence $A^{\prime} \subseteq D^{\prime}$.

Let $x \in B^{\prime}\setminus A^{\prime}$ then $v_{x}(\alpha) = u_{1}$. As $\alpha \vDash_{4} \beta$ hence $v_{x}(\beta) = t$ or $u_{1}$. This implies $x \in E^{\prime}$, whence $B^{\prime} \subseteq E^{\prime}$.


On the other hand, if $x \notin F^{\prime}$, $v_{x}(\beta) = f$. By assumption, we have   $v_{x}(\alpha) = f$, so that $x \notin C^{\prime}$, giving  $C^{\prime} \subseteq F^{\prime}$.
\end{proof}\qed
Finally, we have the following 4-valued semantics of the logic $\mathcal{L}_{DBS}$.
\begin{theorem}(4-valued semantics)
 $\alpha \vdash_{\mathcal{L}_{DBS}} \beta$ if and only if $\alpha \vDash_{\textbf{4}} \beta$.
\end{theorem}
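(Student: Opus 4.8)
The plan is to derive this statement purely by chaining the two results that immediately precede it, so no new construction is required. First I would invoke the completeness theorem stated just above Theorem \ref{thm15}, whose chain of equivalences in particular gives $\alpha \vdash_{\mathcal{L}_{DBS}} \beta$ if and only if $\alpha \vDash_{\mathcal{DBS}(\mathcal{P}(U))^{[3]}} \beta$. Theorem \ref{thm15} then supplies $\alpha \vDash_{\mathcal{DBS}(\mathcal{P}(U))^{[3]}} \beta$ if and only if $\alpha \vDash_{\textbf{4}} \beta$. Composing these two biconditionals yields $\alpha \vdash_{\mathcal{L}_{DBS}} \beta$ if and only if $\alpha \vDash_{\textbf{4}} \beta$, which is exactly the claim. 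The final paragraph of the theorem is therefore a corollary of the preceding two.

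If one unwinds the cited completeness theorem, the honest work splits into soundness and completeness for the double Stone axiomatisation. For soundness I would check each of the seven groups of $\mathcal{L}_{DBS}$-postulates validates under every valuation into a double Stone algebra; by subdirect irreducibility (Theorem \ref{thm12A}) it suffices to verify the corresponding inequalities in $\textbf{4}$. For completeness I would run the usual Lindenbaum--Tarski construction, quotienting $\mathcal{F}_{\sim,\neg}$ by the congruence $\alpha \equiv \beta$ iff both $\alpha \vdash \beta$ and $\beta \vdash \alpha$, and confirm that the postulates force the quotient to be a double Stone algebra (distributive lattice from $BDLL$, the Stone and dual Stone pseudocomplement laws from the contraposition, linearity, and the $\alpha \wedge \sim\alpha \vdash \bot$, $\top \vdash \sim\alpha \vee \sim\sim\alpha$ postulates and their duals). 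The canonical valuation into this algebra then witnesses any underivable consequent, and Theorem \ref{thm13} embeds that algebra into some $(\mathcal{P}(U))^{[3]}$.

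The genuinely computational step, and where I expect the main obstacle, is the one packaged inside Theorem \ref{thm15}. Given $\alpha \vDash_{\textbf{4}} \beta$ and a valuation $v$ into $(\mathcal{P}(U))^{[3]}$, one fixes $v(\gamma) = (A,B,C)$ with $A \subseteq B \subseteq C$ and exploits that $\{A,\, B \setminus A,\, C \setminus B,\, C^{c}\}$ partitions $U$ to define the pointwise valuations $v_{x} \colon \mathcal{F} \to \textbf{4}$. The delicate part is verifying that each $v_{x}$ is simultaneously a homomorphism for both negations: the $\sim$-clauses rest on $v(\sim\gamma) = (C^{c},C^{c},C^{c})$ and the $\neg$-clauses on $v(\neg\gamma) = (A^{c},A^{c},A^{c})$, and each requires dispatching the four membership cases for $x$ giving values $t,u_{1},u_{2},f$. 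Once $v_{x}$ is confirmed to be a valuation, pushing $v_{x}(\alpha) \le v_{x}(\beta)$ back through all four truth values recovers the componentwise inclusions $A' \subseteq D'$, $B' \subseteq E'$, $C' \subseteq F'$, i.e. $v(\alpha) \le v(\beta)$. I expect that case analysis, rather than the final chaining, to be where all the real care lies.
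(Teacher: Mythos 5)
Your proposal matches the paper's own (implicit) argument: the theorem is presented there as an immediate consequence of chaining the preceding completeness theorem ($\alpha \vdash_{\mathcal{L}_{DBS}} \beta$ iff $\alpha \vDash_{\mathcal{DBS}(\mathcal{P}(U))^{[3]}} \beta$) with Theorem \ref{thm15}, exactly as in your first paragraph. Your further unwinding --- the Lindenbaum--Tarski construction and the pointwise valuations $v_{x}$ with the four-case check for both $\sim$ and $\neg$ --- reproduces the content the paper already supplies in the proofs of those two preceding results, so the approach is essentially identical.
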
	
Now, a natural question arises here, what happens if we follow the Dunn's semantic consequence relation of De Morgan logic?
%
%
%
%
\begin{definition}
 Let $\alpha,\beta \in \mathcal{F}_{DBS}$.\vskip 2pt
\blr
\item   $\alpha \vDash_{1} \beta$ if and only  if, if  $v(\alpha) = 1$ then  $v(\beta) = 1$~~ {\rm (Truth preservation)}.
\item  $\alpha \vDash_{0} \beta$ if and only if, if $v(\beta) = 0$ then  $v(\alpha) = 0$  {\rm (Falsity preservation)}.
\item  $\alpha \vDash_{0,1} \beta$ if and only if,  $\alpha \vDash^{S}_{1} \beta$ {\it and} $\alpha \vDash_{0} \beta$.
\elr
\end{definition} 

\begin{lemma}

$\alpha \vDash_{1} \beta$ if and only if $\alpha \vDash_{0} \beta$. 

\end{lemma}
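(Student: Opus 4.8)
The plan is to mirror the two one-directional arguments of Lemma \ref{lemma2}, but now exploiting that a double Stone algebra carries \emph{both} a Stone negation $\sim$ and a dual Stone negation $\neg$, so that each implication can be driven by the appropriate double negation. Throughout I would work with valuations into the subdirectly irreducible algebra $\textbf{4}$, recalling that on $\textbf{4}$ one has $\sim\sim x = 1$ for every $x \neq 0$ (and $\sim\sim 0 = 0$), while $\neg\neg x = 0$ for every $x \neq 1$ (and $\neg\neg 1 = 1$).

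For the direction $\alpha \vDash_1 \beta \Rightarrow \alpha \vDash_0 \beta$, I would take a valuation $v$ with $v(\beta) = 0$ and assume, for contradiction, that $v(\alpha) \neq 0$. Since $\alpha \vDash_1 \beta$ already forbids $v(\alpha) = 1$ (otherwise $v(\beta)$ would be $1$, not $0$), the value $v(\alpha)$ lies strictly between $0$ and $1$, so $\sim\sim v(\alpha) = 1$ while $\sim\sim v(\beta) = 0$. Setting $v^{*}(\gamma) := \sim\sim v(\gamma)$ then produces a map with $v^{*}(\alpha) = 1$ and $v^{*}(\beta) = 0$, contradicting $\alpha \vDash_1 \beta$ once $v^{*}$ is known to be a valuation. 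The converse $\alpha \vDash_0 \beta \Rightarrow \alpha \vDash_1 \beta$ is symmetric: starting from $v(\alpha) = 1$ and $v(\beta) \neq 1$ (so $v(\beta) \neq 0$ by $\vDash_0$), one uses $v^{*}(\gamma) := \neg\neg v(\gamma)$, for which $v^{*}(\alpha) = 1$ and $v^{*}(\beta) = 0$, contradicting $\alpha \vDash_0 \beta$.

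The substance of the argument is therefore the claim that $v^{*} = \sim\sim v$ and $v^{*} = \neg\neg v$ are genuine valuations on $\mathcal{F}_{\sim,\neg}$. Compatibility with $\vee$ and $\wedge$ is immediate from Proposition \ref{pp3} (which holds in a double Stone algebra for both double negations), and compatibility with the constants is clear. Compatibility of $\sim\sim v$ with the connective $\sim$, and of $\neg\neg v$ with $\neg$, reduces to the Stone identity $\sim\sim\sim\, =\, \sim$ and the dual Stone identity $\neg\neg\neg\, =\, \neg$ respectively.

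The main obstacle is the \emph{cross} compatibility. For $v^{*} = \sim\sim v$ to respect the connective $\neg$ one needs $\sim\sim\neg x = \neg\sim\sim x$, and dually for $\neg\neg v$ and $\sim$, and these two negations of a double Stone algebra need not commute through one another. I would settle the point by a direct case check on the four values of $\textbf{4}$; this is exactly where I expect the delicate work to lie, and should the identities fail for formulas involving the opposite negation the construction has to be repaired — for instance by restricting the transformation to the Boolean skeleton of $\textbf{4}$ on which both double negations act as complementation, or by arguing separately on the $\sim$-free and $\neg$-free fragments — so that the transformed map is a legitimate valuation before the contradiction is drawn.
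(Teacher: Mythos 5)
Your proposal follows the same route as the paper's own proof: transform a valuation $v$ by $v^{*}:=\sim\sim v$ for the direction $\vDash_{1}\Rightarrow\vDash_{0}$ and by $\neg\neg v$ for the converse, then derive a contradiction. You are right, and in fact more careful than the paper, in isolating the crux as the cross-compatibility of $v^{*}$ with the \emph{other} negation. But you leave that point unresolved, and the ``direct case check on the four values'' you defer to actually fails: in $\textbf{4}$ one has $\sim\sim\neg u_{1}=\sim\sim 1=1$ while $\neg\sim\sim u_{1}=\neg 1=0$, so $\sim\sim$ does not commute with $\neg$ and the map $v^{*}=\sim\sim v$ is not a valuation on any formula containing $\neg$ (dually for $\neg\neg v$ and $\sim$). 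The paper's proof silently asserts that $v_{1}=\sim\sim v$ is a valuation and so has exactly the gap you flagged.

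Moreover, none of your suggested repairs can succeed, because the lemma is false on the full language $\mathcal{F}_{\sim,\neg}$. Take $\alpha=p$ and $\beta=\neg\neg p$. Then $\alpha\vDash_{1}\beta$, since $v(p)=1$ gives $v(\neg\neg p)=\neg\neg 1=1$; but the valuation with $v(p)=u_{1}$ gives $v(\neg\neg p)=\neg 1=0$ while $v(p)=u_{1}\neq 0$, so $\alpha\nvDash_{0}\beta$. This is precisely the counterexample the paper itself records just after Lemma \ref{lemma2} for the dual Stone consequence relations, transplanted into $\textbf{4}$. Restricting to the Boolean skeleton or to the $\sim$-free and $\neg$-free fragments, as you suggest, would only yield a weaker statement on those fragments, not the lemma as stated; the honest conclusion of your case check should be that the equivalence of $\vDash_{1}$ and $\vDash_{0}$ breaks down once both negations are present.
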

\begin{proof}
 We prove `if' part, proof of `only if' part follows similarly. 

 Assume $\alpha \vDash_{1} \beta$ and let $v: \mathcal{F} \rightarrow \textbf{4}$ such that $v(\beta) = 0$. 
As, $\alpha \vDash_{1} \beta$ so $v(\alpha) \neq 1$. Assume $v(\alpha) = u_{1}$ and define $v_{1}: \mathcal{F} \rightarrow \textbf{4}$ as $v_{1}(\gamma) = \sim\sim v(\gamma)$. Then $v_{1}$ is a valuation in $\textbf{4}$ and $v_{1}(\alpha) = 1$. So, $v_{1}(\beta) = 1$ but $v_{1}(\beta) = v(\beta) = 0$ which is absurd. Hence $v(\alpha) \neq u_{1}$. Similarly  $v(\alpha) \neq u_{2}$.

\end{proof}\qed
\begin{proposition}

If $\alpha \vDash_{\textbf{4}} \beta$ then $\alpha \vDash_{1,0} \beta$.  
\end{proposition}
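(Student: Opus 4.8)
The plan is to unwind the definitions and observe that the implication is an immediate order-theoretic consequence of $\vDash_{\textbf{4}}$, using only that $\textbf{4} = \{0,u_{2},u_{1},1\}$ is a bounded lattice with greatest element $1$ and least element $0$ (cf.\ Figure \ref{fig5}). I would fix an arbitrary valuation $v: \mathcal{F}_{\sim,\neg} \rightarrow \textbf{4}$. Since by hypothesis $\alpha \vDash_{\textbf{4}} \beta$, the definition of validity gives $v(\alpha) \leq v(\beta)$ for every such $v$. From this single inequality both truth preservation and falsity preservation drop out at the two extreme values of the order.

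For truth preservation, suppose $v(\alpha) = 1$. Then $1 = v(\alpha) \leq v(\beta)$, and as $1$ is the top element of $\textbf{4}$ the only possibility is $v(\beta) = 1$; since $v$ was arbitrary this yields $\alpha \vDash_{1} \beta$. For falsity preservation, suppose $v(\beta) = 0$. Then $v(\alpha) \leq v(\beta) = 0$, and as $0$ is the bottom element of $\textbf{4}$ we get $v(\alpha) = 0$, whence $\alpha \vDash_{0} \beta$. Combining the two gives $\alpha \vDash_{1,0} \beta$ directly from its definition.

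The key point to expect is that this direction requires no construction of auxiliary valuations, in sharp contrast with the preceding Lemma and Lemma \ref{lemma2}, where the nontrivial implications force one to pass from $v$ to the derived map $\gamma \mapsto \sim\sim v(\gamma)$ (respectively $\neg\neg v(\gamma)$) in order to manufacture a valuation taking value $1$ or $0$. Here no such manoeuvre is needed: the whole content is that the inequality $v(\alpha) \leq v(\beta)$ collapses to the desired implications precisely because $1$ and $0$ are extremal. Consequently there is no real obstacle; the only thing to state cleanly is the top/bottom property of $\textbf{4}$, after which the proof is a two-line case inspection on the value of $v(\alpha)$ or $v(\beta)$.
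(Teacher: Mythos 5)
Your proof is correct: the paper states this proposition without any proof, treating it as immediate, and your argument --- that $v(\alpha)\leq v(\beta)$ forces $v(\beta)=1$ whenever $v(\alpha)=1$ and $v(\alpha)=0$ whenever $v(\beta)=0$, because $1$ and $0$ are the top and bottom of the chain $\textbf{4}$ --- is exactly the intended one. Your remark that, in contrast with Lemma \ref{lemma2} and the preceding lemma, no auxiliary valuation of the form $\gamma\mapsto\,\sim\sim v(\gamma)$ or $\gamma\mapsto\neg\neg v(\gamma)$ is needed here is also accurate.
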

Let us show through an example below that converse of the above proposition may not be true.
\begin{example}
Consider the sequent $\alpha \wedge \neg \alpha \vdash \beta \vee \sim \beta$. Let us show that $\alpha \wedge \neg \alpha \vDash_{1,0} \beta \vee \sim \beta$. So, assume that $v$ is a valuation in $\textbf{4}$ such that $v(\alpha \wedge \neg \alpha) = 1$. So, $v(\alpha) = 1$ and $\neg v(\alpha) = 1$. But there is no $x \in \textbf{4}$ such that $x \wedge \neg x = 1$. Hence $\alpha \wedge \neg \alpha \vDash_{1} \beta \vee \sim \beta$ is vacuously true. 
Similarly, there is no $x \in \textbf{4}$ such that $x \vee \sim x = 0$. Hence $\alpha \wedge \neg \alpha \vDash_{0} \beta \vee \sim \beta$ is vacuous.

So, $\alpha \wedge \neg \alpha \vDash_{1,0} \beta \vee \sim \beta$. 

\noindent Now let us show that $\alpha \wedge \neg \alpha \nvDash_{\textbf{4}} \beta \vee \sim \beta$. Define a valuation $v$ in $\textbf{4}$ such that $v(\alpha) = u_{1}$ and $v(\beta) = u_{2}$. Then $v(\alpha \wedge \neg \alpha) = v(\alpha) \wedge \neg v(\alpha) = u_{1} \wedge 1 = u_{1}$ and $v(\beta \vee \sim \beta) = u_{2}$. But $u_{1} \nleq u_{2}$, hence $\alpha \wedge \neg \alpha \nvDash_{\textbf{4}} \beta \vee \sim \beta$.
\end{example} 

\section{Conclusions}\label{sec4}
The lattices $B^{[2]}$ and $B^{[3]}$ can be extended to form various algebraic structures. In this article, we have studied  
$B^{[2]}$ as (dual) Stone algebra and $B^{[3]}$ as double Stone algebra.  Moisil obtained representation of 3-valued and 4-valued  {\L}ukasiewicz algebra  in terms of  $B^{[2]}$ and $B^{[3]}$ respectively. Similar to his works we have obtained the representations of (dual) Stone and double Stone algebras in terms of  $B^{[2]}$ and $B^{[3]}$ respectively.

The $\textbf{2}$ element Boolean algebra play a fundamental role in classical propositional logic (via $\textbf{T}$rue- $\textbf{F}$alse) semantics and Boolean algebra (via Stone's representation theorem). We have established the same for:
\begin{enumerate}
\item The Stone algebra $\textbf{3}_{\sim}$, the logic $\mathcal{L}_{\mathcal{S}}$ and the class of all Stone algebras.
\item The Stone algebra $\textbf{3}_{\neg}$, the logic $\mathcal{L}_{\mathcal{DS}}$ and the class of all dual Stone algebras.
\item The double Stone algebra $\textbf{4}$, the logic $\mathcal{L}_{\mathcal{DBS}}$ and the class of all double Stone algebras.
\end{enumerate}
\noindent Moreover, due to rough set representation result  of (dual) Stone algebra, we have been able to provide rough set semantics of the logic ($\mathcal{L}_{\mathcal{DS}}$) $\mathcal{L}_{\mathcal{S}}$ which thus leads to the equivalence of 3-valued, algebraic and rough set semantics of the logic ($\mathcal{L}_{\mathcal{DS}}$) $\mathcal{L}_{\mathcal{S}}$.       
\end{document}